\newtheorem{theorem}{Theorem}[section]
\theoremstyle{definition}
\newtheorem{definition}[theorem]{Definition}
\newtheorem{example}[theorem]{Example}
\newtheorem{prop}[theorem]{Proposition}
\newtheorem{corollary}[theorem]{Corollary}
\newtheorem{claim}[theorem]{Claim}
\theoremstyle{remark}
\newtheorem{remark}[theorem]{Remark}
\numberwithin{equation}{section}
\newcommand{\vect}[1]{\boldsymbol{#1}}
\newcommand{\C}{\mathbb{C}}
\newcommand{\R}{\mathbb{R}}
\newcommand{\Z}{\mathbb{Z}}
\newcommand{\rank}{\operatorname{rank}}
\renewcommand{\Im}{\operatorname{Im}}
\renewcommand{\phi}{\varphi}
\renewcommand{\v}{\vect{v}}
\newcommand{\w}{\vect{w}}
\newcommand{\e}{\vect{e}}
\newcommand{\n}{\vect{n}}
\newcommand{\x}{\vect{x}}
\newcommand{\y}{\vect{y}}
\newcommand{\q}{\vect{q}}
\renewcommand{\u}{\vect{u}}
\renewcommand{\l}{\vect{l}}
\newcommand{\zv}{\vect{0}}
\newcommand{\mycomment}[1]{}
\newcommand{\ve}{\varepsilon}
\newcommand{\Aut}{\mathrm{Aut}}
\newcommand{\Diff}{\mathrm{Diff}}
\newcommand{\ord}{\mathrm{ord}}
\newcommand{\Hom}{\mathrm{Hom}}
\newcommand{\T}{{}^t\!}
\begin{document}

\title[Classification of torus bundles over surfaces]{Classification of orientable torus bundles over closed orientable surfaces}

\author[N. Kasuya]{Naohiko Kasuya}
\address{Department of Mathematics, Faculty of Science, Hokkaido University, 
North 10, West 8, Kita-ku, Sapporo, Hokkaido 060-0810, Japan}
\email{{\tt nkasuya@math.sci.hokudai.ac.jp}}
\thanks{The first author is partially supported by JSPS KAKENHI 21K13797. }

\author[I. Noda]{Issei Noda}
\address{Department of Mathematics, Faculty of Science, Hokkaido University, 
North 10, West 8, Kita-ku, Sapporo, Hokkaido 060-0810, Japan}
\email{{\tt noda.issei.l1@elms.hokudai.ac.jp}}
\thanks{}

\subjclass[2020]{Primary 55R15, 57R22; Secondary 20F65, 57K43}

\date{December 14, 2025.}

\dedicatory{Dedicated to Professor Takashi Tsuboi on the occasion of his $70$th birthday}

\keywords{Torus bundles, monodromies, Euler classes, charts, group theory}

\begin{abstract}
Let $g$ be a non-negative integer, 
$\Sigma _g$ a closed orientable surface of genus $g$, and $\mathcal{M}_g$ its mapping class group. 
We classify all the group homomorphisms $\pi _1(\Sigma _g)\to G$ 
up to the action of $\mathcal{M}_g$ on $\pi _1(\Sigma _g)$ in the following cases; 
(1) $G=PSL(2;\Z)$, (2) $G=SL(2;\Z)$. 
As an application of the case (2), we completely classify orientable $T^2$-bundles over closed orientable surfaces up to bundle isomorphism. In particular, we show that any orientable $T^2$-bundle over $\Sigma _g$ with $g\geq 1$ is isomorphic to the fiber connected sum of $g$ pieces of $T^2$-bundles over $T^2$. 
Moreover, the classification result in the case (1) can be generalized into the case where $G$ is the free product of a finite number of finite cyclic groups. We also apply it to an extension problem of maps from a closed surface to a connected sum of lens spaces. 
\end{abstract}

\maketitle


\section{Introduction}
Let $g$ be a non-negative integer, $\Sigma _g$ a closed orientable surface of genus $g$, 
and $\Diff_+(\Sigma _g)$ and $\mathcal{M}_g$ denote its orientation preserving diffeomorphism group and mapping class group, respectively. 
The aim of this paper is to classify orientable $T^2$-bundles over $\Sigma _g$ up to bundle isomorphisms. 

Orientable $T^2$-bundles appear in various scenes of geometry as important examples. 
For example, elliptic bundles over a complex curve like (some kind of) Hopf surfaces and primary Kodaira surfaces have been extensively studied as compact complex surfaces, 
and they are topologically nothing but orientable $T^2$-bundles over closed orientable surfaces. 
In \cite{Th76}, Thurston proved that a primary Kodaira surface admits non-K\"{a}hler symplectic structures  
when regarded as a smooth real $4$-dimensional manifold. 
This example is called a Kodaira--Thurston manifold. 
In the same paper, he also gave a necessary and sufficient condition 
for a surface bundle over a surface to admit a compatible symplectic structure. 
The result was later refined by Geiges \cite{Ge92} and Walczak \cite{Wa05} in the case of $T^2$-bundles (see also \cite{Na05}). 
In the $4$-dimensional topology, Seifert fibered $4$-manifolds, the analogue of Seifert fibered spaces in dimension $4$, have been classified (\cite{Zi69, Ue90, Ue91, Ke08}), where orientable $T^2$-bundles are treated as the case without multiple fibers. 
Zieschang's result \cite{Zi69}, the starting point of the classification, says that 
the isomorphism class of an orientable $T^2$-bundle over $\Sigma_g$ with $g\geq 2$ is determined 
only by the fundamental group of the total space (see also \cite{Ik88, Hi02}). 
This is an important result, but is not worth being called the classification of bundle isomorphism classes. 
For, it is very difficult to distinguish the isomorphism classes of given two groups in general. 
Therefore, it is important to approach from the front 
to the classification of orientable $T^2$-bundles in terms of their monodromies and Euler classes. 

In general, classification of smooth $F$-bundles up to bundle isomorphisms is one of the most fundamental problems in topology, but at the same time, a very difficult problem. 
For, the classification problem is reduced to that of homotopy classes of continuous maps from the base space to the classifying space $\mathrm{BDiff}(F)$, whose topology is usually hard to analyze. 
In our case, however, the situation is not so bad. 
Indeed, the oriented diffeomorphism group $\mathrm{Diff}_+(T^2)$, which is the structure group of an orientable $T^2$-bundle, is known to be homotopy equivalent to the affine transformation group $\mathrm{Aff}_+(T^2)$, and in particular, we have 
\[ \pi _0(\mathrm{Diff}_+(T^2))\cong SL(2;\Z), \; \pi _1(\mathrm{Diff}_+(T^2))\cong \Z^2, \; \pi _i(\mathrm{Diff}_+(T^2))=0 \; (i\geq 2). \] 
Then we can show that the isomorphism class of an orientable $T^2$-bundle over $\Sigma _g$ 
is determined only by a group homomorphism $\pi _1(\Sigma _g)\to SL(2; \Z)$ 
called the monodromy and a local coefficient cohomology class in $H^2(\Sigma _g; \{\pi _1(\Sigma _g)\})$ 
called the Euler class (Proposition~\ref{isom}). 
They are respectively represented by 
$A_1, B_1, \ldots , A_g, B_g\in SL(2;\Z)$ with $[A_1, B_1]\cdots [A_g, B_g]=E_2$ and $(m, n)\in \Z^2$, 
so we may denote the bundle by $M(A_1, B_1, \ldots , A_g, B_g; m, n)$ (Proposition~\ref{model}). 
Once the monodromy is fixed, we can easily deal with the Euler class, 
so what we really have to do is to classify the group homomorphisms $\pi _1(\Sigma _g)\to SL(2; \Z)$ 
representing the monodromies of $T^2$-bundles up to the action of 
$\mathcal{M}_g$ on $\pi _1(\Sigma _g)$ and the conjugate action of $SL(2;\Z)$ on itself. 

When the base is $S^2$ ($g=0$), the monodromy is trivial and hence only the Euler class is valid. 
Consequently, the set of isomorphism classes of $T^2$-bundles is $\Z_{\geq 0}$ (\S~\ref{genus 0}). 
When the base is $T^2$ ($g=1$), the classification has been settled by Sakamoto--Fukuhara \cite{SF83} (Theorem~\ref{SF}). 
On the other hand, when $g\geq 2$, the classification of monodromies has not yet been established 
though Zieschang's result stated above is known. 

In this sense, the goal of this paper is to classify all the group homomorphisms 
$$\mathrm{Hom}\left(\pi _1(\Sigma _g), SL(2; \Z)\right)=\{ \widetilde{\rho} \colon \pi _1(\Sigma _g)\to SL(2;\Z) \mid \widetilde{\rho} \text{ : homomorphism}\}$$ up to the action of $\mathcal{M}_g$ when $g\geq 2$. 
Instead of trying to classify them directly, our first approach to this problem is to focus on the homomorphism 
$p \circ \widetilde{\rho}\colon \pi _1(\Sigma _g)\to PSL(2; \Z)$ obtained from the monodromy homomorphism $\widetilde{\rho}$ by taking the composition with the quotient map $$p \colon SL(2; \Z)\to PSL(2;\Z)=SL(2;\Z)/\{\pm E_2\}.$$ 
Namely, our immediate goal is the classification of $\mathrm{Hom}\left(\pi _1(\Sigma _g), PSL(2; \Z)\right)$. 
Now we regard $\Sigma _g$ as the boundary of a genus-$g$ handlebody $V_g$ embedded in $\R^3$. 
This allows us to equip both $V_g$ and its boundary $\Sigma_g = \partial V_g$ with the canonical orientations induced by the standard orientation on $\R^3$.
Let $\{\alpha _i,  \beta _i\}_{1\leq i\leq g}$ be the canonical generators of  $\pi _1(\Sigma _g)$, that is, $\alpha _i$ and $\beta _i$ are meridian and longitude, respectively, for each $i$ with $1\leq i\leq g$ 
(see Figure~\ref{fig:generators}). 
Then we have \[ \pi _1(\Sigma _g)=\langle \alpha _1, \beta _1, \ldots, \alpha _g, \beta _g \mid [\alpha _1, \beta _1]\cdots [\alpha _g, \beta _g] \rangle . \] 
Now we are ready to state our first theorem, which will be a breakthrough to the classification. 
\begin{figure}
\begin{center}
\includegraphics[width=10cm, pagebox=cropbox,clip]{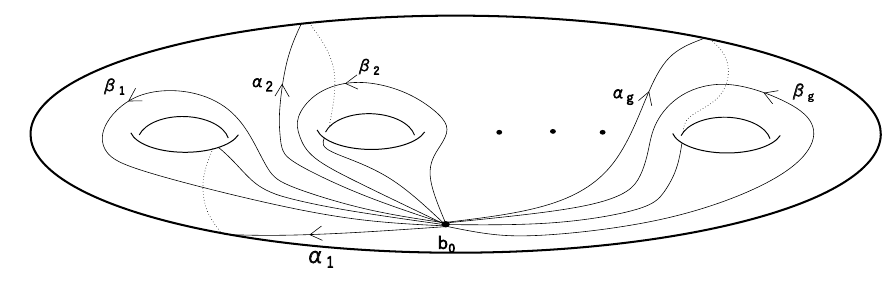}
\caption{Generators of $\pi _1(\Sigma _g)$}
\label{fig:generators}
\end{center}
\end{figure}

\vspace{8pt}

\begin{theorem}~\label{psl}
For any homomorphism $\rho \colon \pi _1(\Sigma _g)\to PSL(2; \Z)$, 
there exists a mapping class $f\in \mathcal{M}_g$ such that 
$(\rho \circ f_{\ast })(\alpha _i)=e$ for each $i$ with $1\leq i\leq g$, where $e$ denotes the identity element of $PSL(2; \Z)$. 
\end{theorem}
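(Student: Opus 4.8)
The plan is to exploit the free product decomposition $PSL(2;\Z)\cong \Z/2 \ast \Z/3$, under which every element is either of finite order $1,2,3$ (conjugate into a factor) or of infinite order, and to run an induction on the genus $g$. The structural observation driving the induction is a decoupling phenomenon: once a mapping class has been found making $\rho(\alpha_1)=e$, the image of the surface relator collapses, since then $[\rho(\alpha_1),\rho(\beta_1)]=e$ forces $\prod_{i\ge 2}[\rho(\alpha_i),\rho(\beta_i)]=e$. The first handle no longer appears in the defining relation, so the restriction of $\rho$ to $\langle \alpha_2,\beta_2,\ldots,\alpha_g,\beta_g\rangle$ is exactly a representation of $\pi_1(\Sigma_{g-1})$. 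Applying the genus-$(g-1)$ case and realizing the resulting mapping class by a homeomorphism supported away from the first handle (so that $\rho(\alpha_1)=e$ is preserved) would finish the induction; the only routine point to check is that a mapping class of the closed surface $\Sigma_{g-1}$ lifts to one of the one-holed subsurface carrying handles $2,\ldots,g$, which follows from the surjectivity of the capping homomorphism. Thus everything reduces to the base case together with a single key step: producing a mapping class with $\rho(\alpha_1)=e$.

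For the base case $g=1$ the group $\pi_1(\Sigma_1)=\Z^2$ is abelian, so $a:=\rho(\alpha_1)$ and $b:=\rho(\beta_1)$ commute and hence generate a cyclic subgroup $\langle h\rangle$ of $PSL(2;\Z)$. Writing $a=h^p$, $b=h^q$, I would choose a primitive vector $(x,y)\in\Z^2$ with $h^{px+qy}=e$ — taking $(x,y)=(-q,p)/\gcd(p,q)$ when $h$ has infinite order, and a primitive vector in the finite-index kernel of $\Z^2\to\langle h\rangle$ when $h$ has finite order — and extend it to a basis by an element of $\mathcal{M}_1=SL(2;\Z)$. The corresponding mapping class sends $\alpha_1$ to a generator whose image is $e$.

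The heart of the argument is the key step in genus $g\ge 2$, where I have all of $\mathcal{M}_g$ at my disposal. Single-handle mapping classes (those supported on the first handle) act on the pair $(a,b)=(\rho(\alpha_1),\rho(\beta_1))$ precisely by Nielsen transformations, since the mapping class group of the one-holed torus realizes the orientation-preserving automorphisms of the free group $\langle\alpha_1,\beta_1\rangle$ preserving the boundary commutator. When $\langle a,b\rangle$ is cyclic this immediately kills $a$ by the Euclidean algorithm on exponents followed by a swap, exactly as in the base case. The difficulty is the genuinely non-abelian case, in which $\langle a,b\rangle$ is a rank-two free product and no single-handle move can shorten $a$. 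Here I would bring in the inter-handle moves — handle slides and the conjugation (point-pushing) moves that redistribute the relator among the handles — and use the relation $[a,b]=\bigl(\prod_{i\ge 2}[\rho(\alpha_i),\rho(\beta_i)]\bigr)^{-1}$ to trade syllables of $a$ against the other handles. Measuring progress by a complexity such as the total free-product syllable length of the tuple $(\rho(\alpha_i),\rho(\beta_i))_i$, the plan is to show that whenever some $\rho(\alpha_i)\ne e$ one of these moves strictly decreases the complexity, so that the process terminates with all meridians trivial; the finiteness of the free factors (orders $2$ and $3$) is what keeps the normal-form cancellations under control.

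I expect this last step to be the main obstacle. The delicate points are (i) choosing the complexity together with the repertoire of handle slides so that a strict decrease is always available — in particular, handling the interaction between the non-abelian subgroup $\langle a,b\rangle$ and the remaining handles without the relator-redistribution merely shifting length from one handle to another — and (ii) proving termination. A graphical chart-type calculus recording the images of the generators and the admissible local moves between them (in the spirit of the charts in this paper) seems the natural way to organize this case analysis and to verify that the chosen complexity is genuinely monotone.
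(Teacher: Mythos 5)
Your reductive framework is sound: once $\rho(\alpha_1)=e$ the relator collapses and the restriction to handles $2,\dots,g$ is a genus-$(g-1)$ representation; the lift through the capping surjection $\mathcal{M}(\Sigma_{g-1,1})\to\mathcal{M}_{g-1}$ works because $\rho$ restricted to the subsurface group factors through the capped quotient (the boundary word has trivial image), so the extended mapping class preserves $\rho(\alpha_1)=e$ and trivializes the remaining meridians. Your base case is also correct: the image is cyclic, and since $\Z^2/\ker$ is cyclic the kernel contains a primitive vector, extendable to a basis by $\mathcal{M}_1=SL(2;\Z)$. But the heart of the theorem --- producing $f$ with $\rho(f_{\ast}\alpha_1)=e$ when $g\geq 2$ and $\langle a,b\rangle=\langle\rho(\alpha_1),\rho(\beta_1)\rangle$ is non-cyclic --- is exactly where you stop proving and start hoping. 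You never define the complexity, never fix the repertoire of handle slides and point-pushes, and never show that a strictly decreasing move is always available or that the process terminates; you flag these as ``delicate points,'' but the danger you name (relator redistribution merely shifting syllable length between handles) is real: syllable-length counts on tuples in a free product are notoriously non-monotone under Nielsen-type moves constrained to preserve the surface relator, and making such a reduction scheme work is essentially a Grushko--Zieschang ``binding tie'' theorem, not a routine verification. As it stands, this is a genuine gap at the crux of the proof.

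It is instructive that the paper sidesteps this entirely: it never tries to kill $\alpha_1$ by word moves. Instead it realizes $\rho$ by a chart for $\langle a,b\mid a^2,b^3\rangle$ and exploits one special combinatorial feature: a degree-$2$ vertex (relator $a^2$) and a degree-$3$ vertex (relator $b^3$) are never joined by an edge, so every connected component containing degree-$2$ vertices, and every hoop, is an \emph{embedded circle} on $\Sigma_g$ with trivial monodromy along a parallel pushoff. Writing $X$ for the set of such circles, the trichotomy --- $X=\emptyset$ (image abelian, so any separating curve, being a commutator, has trivial image); $X$ contains a non-separating circle $c$ (take $l_1$ parallel to $c$ with $\rho(l_1)=e$ and set $\gamma=[l_2,l_1^{-1}]$); all circles in $X$ separating (take $\gamma$ bounding the complementary region, where the chart is purely degree-$3$ and the argument of the abelian case applies) --- always produces a separating curve $\gamma$ with $\rho(\gamma)=e$ splitting off a torus, and induction on $g$ finishes. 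So the finiteness of the free factors enters through graph topology rather than through normal-form cancellation bookkeeping; to complete your proposal you would essentially have to reprove this (or invoke Zieschang's lemma), at which point you would have reconstructed the paper's argument.
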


The following is an immediate corollary to Theorem~\ref{psl}. 

\vspace{8pt}

\begin{corollary}~\label{sl}
For any homomorphism $\widetilde{\rho }\colon \pi _1(\Sigma _g)\to SL(2; \Z)$, 
there exists a mapping class $f\in \mathcal{M}_g$ such that 
$(\widetilde{\rho }\circ f_{\ast })(\alpha _i)=\pm E_2$ for each $i$ with $1\leq i\leq g$. 
\end{corollary}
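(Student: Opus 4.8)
The plan is to deduce this directly from Theorem~\ref{psl} by passing to the quotient $PSL(2;\Z)$. Given an arbitrary homomorphism $\widetilde{\rho}\colon \pi _1(\Sigma _g)\to SL(2;\Z)$, I would form the composite $\rho := p\circ \widetilde{\rho}\colon \pi _1(\Sigma _g)\to PSL(2;\Z)$, which is again a group homomorphism. Applying Theorem~\ref{psl} to $\rho$ then produces a mapping class $f\in \mathcal{M}_g$ with $(\rho \circ f_{\ast})(\alpha _i)=e$ for every $i$ with $1\leq i\leq g$.

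The second step is to transport this conclusion back along $p$. By associativity of composition, $\rho \circ f_{\ast} = (p\circ \widetilde{\rho})\circ f_{\ast} = p\circ (\widetilde{\rho}\circ f_{\ast})$, so the identity above reads $p\big((\widetilde{\rho}\circ f_{\ast})(\alpha _i)\big)=e$ for each $i$. By definition of the quotient map, the fiber over the identity is exactly $\Ker p = \{\pm E_2\}$. Hence $(\widetilde{\rho}\circ f_{\ast})(\alpha _i)\in\{\pm E_2\}$, that is, $(\widetilde{\rho}\circ f_{\ast})(\alpha _i)=\pm E_2$ for each $i$, with the \emph{same} mapping class $f$ furnished by Theorem~\ref{psl}.

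There is essentially no obstacle here: all of the geometric and combinatorial content lives in Theorem~\ref{psl}, and the corollary is obtained purely by lifting its conclusion through the central extension $1\to\{\pm E_2\}\to SL(2;\Z)\to PSL(2;\Z)\to 1$. The only point worth keeping in mind is that the sign of each $(\widetilde{\rho}\circ f_{\ast})(\alpha _i)$ is not pinned down by this argument; resolving that sign ambiguity is precisely the extra input that the finer $SL(2;\Z)$-classification developed later in the paper must supply.
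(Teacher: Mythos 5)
Your proof is correct and is exactly the argument the paper intends: the corollary is stated there as an immediate consequence of Theorem~\ref{psl}, obtained precisely by composing with $p$ and using $\Ker p=\{\pm E_2\}$ as you do. Your closing remark that the signs are left undetermined and must be handled by the later $SL(2;\Z)$ analysis also matches the paper's subsequent development.
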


In the proof of Theorem~\ref{psl}, the tool called ``charts" plays an essential role. 
This was originally introduced by Kamada \cite{Ka07} for the study of surface-knots, 
but is a very useful tool for describing $G$-representations of surface groups for a given finitely presented group $G$. 
It also works very well in our case, since $PSL(2;\Z)$ is isomorphic to the free product $\Z_2\ast \Z_3$. 
Then, based on Theorem~\ref{psl} and combining several known results of group theory and mapping class group of handlebody $V_g$, we can obtain the classification of $\mathrm{Hom}\left(\pi _1(\Sigma _g), PSL(2; \Z)\right)$ as follows. 
First, any subgroup $H$ of $PSL(2; \Z)\cong \Z_2\ast \Z_3$ is isomorphic to 
the free product of finite number of copies of $\Z$, $\Z_3$ and $\Z_2$ 
by Kurosh's subgroup theorem (Theorem~\ref{Kurosh}). 
To be more precise, there exist subgroups $H_j\subset H$ ($1\leq j\leq m$) such that 
\[H_1\cong \cdots \cong H_k\cong \Z, \;\; 
H_{k+1}\cong \cdots \cong H_l\cong \Z_3, \;\;  
H_{l+1}\cong \cdots \cong H_m\cong \Z_2 \] 
and 
\[ H=(H_1\ast \cdots \ast H_k)
\ast (H_{k+1}\ast \cdots \ast H_l)
\ast (H_{l+1} \ast \cdots \ast H_m), \] 
where $k, l$ and $m$ are integers satisfying $0\leq k\leq l\leq m$. 
Then our classification theorem is as follows. 

\vspace{8pt}

\begin{theorem}~\label{hom-psl}
For any homomorphism $\rho\colon \pi _1(\Sigma _g)\to PSL(2; \Z)\cong \Z_2\ast \Z_3$, the subgroup $\mathrm{Im}(\rho )\subset \Z_2\ast \Z_3$ can be described as \[ \mathrm{Im}(\rho )=(H_1\ast \cdots \ast H_k)\ast (H_{k+1}\ast \cdots \ast H_l)\ast (H_{l+1} \ast \cdots \ast H_m)\] by Kurosh's subgroup theorem. 
Then there exists a mapping class $f\in \mathcal{M}_g$ such that 
\begin{eqnarray*}
(\rho \circ f_{\ast })(\alpha _i)=e\;\; (1\leq i\leq g), \;\; 
(\rho \circ f_{\ast })(\beta _i)=
\begin{cases}
1\in H_i\cong \Z \;\;\; (1\leq i\leq k), \\
1\in H_i\cong \Z_3 \;\; (k+1\leq i\leq l), \\
1\in H_i\cong \Z_2 \;\; (l+1\leq i\leq m), \\
e \;\;\; (m+1\leq i\leq g),   
\end{cases}
\end{eqnarray*}
where $e$ denotes the identity element of $PSL(2; \Z)$. 
In particular, for any two homomorphisms $\rho_1, \rho_2 \colon \pi _1(\Sigma _g)\to PSL(2; \Z)$, 
the following two conditions are equivalent: 
\begin{enumerate}
\item
$\mathrm{Im}(\rho _1)=\mathrm{Im}(\rho _2)$.
\item
There exists a mapping class $f\in \mathcal{M}_g$ such that $\rho_2=\rho_1\circ f_{\ast }$.  
\end{enumerate}
\end{theorem}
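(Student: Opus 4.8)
The plan is to combine Theorem~\ref{psl} with the action of the handlebody group on the free group $\pi _1(V_g)$ and a Grushko--Nielsen reduction of generating tuples inside $\Z_2\ast \Z_3$. First I would apply Theorem~\ref{psl} to replace $\rho$ by $\rho \circ f_{\ast }$ and thereby assume from the outset that $\rho (\alpha _i)=e$ for every $i$. Since the meridians $\alpha _1,\dots ,\alpha _g$ normally generate the kernel $N$ of the surjection $q\colon \pi _1(\Sigma _g)\to \pi _1(V_g)=F_g$ (the free group on the longitudes $\beta _1,\dots ,\beta _g$), the homomorphism $\rho$ then kills $N$ and factors as $\rho =\bar\rho \circ q$ for a unique $\bar\rho \colon F_g\to PSL(2;\Z)$ with $\bar\rho (\beta _i)=b_i:=\rho (\beta _i)$. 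In particular $\mathrm{Im}(\rho )=\langle b_1,\dots ,b_g\rangle$ is generated by at most $g$ elements, so in its Kurosh decomposition $\mathrm{Im}(\rho )=H_1\ast \cdots \ast H_m$ into cyclic factors we have $m\leq g$ by Grushko's theorem.

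Next I would exploit the subgroup $\mathcal{H}_g\subset \mathcal{M}_g$ of mapping classes that extend over the handlebody $V_g$. Any $f\in \mathcal{H}_g$ preserves the isotopy class of the system of meridian disks, hence satisfies $f_{\ast }(N)=N$; consequently $\rho \circ f_{\ast }$ still annihilates $N$, so the normalisation $\rho (\alpha _i)=e$ is undisturbed, and $f_{\ast }$ descends to an automorphism of $F_g$. I would then invoke the classical fact that the natural homomorphism $\mathcal{H}_g\to \mathrm{Out}(F_g)$ is surjective (every automorphism of the free group is realised by a homeomorphism of the handlebody). Thus, working modulo inner automorphisms, I may transform the generating tuple $(b_1,\dots ,b_g)=(\bar\rho (\beta _1),\dots ,\bar\rho (\beta _g))$ by an arbitrary sequence of elementary Nielsen transformations by precomposing $\rho$ with a suitable $f\in \mathcal{H}_g$; the residual simultaneous conjugation is absorbed afterwards by a further class realising an inner automorphism of $\pi _1(\Sigma _g)$.

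The heart of the argument is then purely group-theoretic: I would show that the generating $g$-tuple $(b_1,\dots ,b_g)$ of $P:=H_1\ast \cdots \ast H_m$ can be carried, by Nielsen transformations together with conjugation of individual free factors, to the standard tuple $(h_1,\dots ,h_m,e,\dots ,e)$, where $h_i$ generates $H_i$. This is a Grushko/Nielsen-reduction statement for free products of cyclic groups: using the normal form in $P$ one repeatedly shortens the tuple by cancellation, and a Nielsen-reduced generating tuple of a free product of freely indecomposable (here cyclic) groups must consist, up to conjugacy, of one generator from each factor together with trivial entries; permuting and relabelling puts the $\Z$-, $\Z_3$-, and $\Z_2$-factors in the prescribed order. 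The remaining ambiguity in the choice of generator of each $H_i$ (for instance $h_i$ versus $h_i^{-1}$ when $H_i\cong \Z$ or $\Z_3$) is removed by a further class in $\mathcal{H}_g$ inducing $\beta _i\mapsto \beta _i^{-1}$ and fixing the other $\beta _j$, so that I can pin down $(\rho \circ f_{\ast })(\beta _i)$ to be exactly $1\in H_i$. This reduction is the step I expect to be the main obstacle, since for free products (unlike free groups) generating tuples need not be globally Nielsen-unique, and controlling the reduction for the specific factors $\Z,\Z_2,\Z_3$ is precisely where the detailed Nielsen method must be applied with care.

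Finally, the displayed equivalence follows formally. The implication (2)$\Rightarrow$(1) is immediate, since $f_{\ast }$ is an automorphism of $\pi _1(\Sigma _g)$ and hence $\mathrm{Im}(\rho _1\circ f_{\ast })=\mathrm{Im}(\rho _1)$. For (1)$\Rightarrow$(2), if $\mathrm{Im}(\rho _1)=\mathrm{Im}(\rho _2)=P$, I fix a single Kurosh decomposition of $P$ and apply the main part to each $\rho _j$, obtaining $f_1,f_2\in \mathcal{M}_g$ such that $\rho _1\circ f_{1\ast }$ and $\rho _2\circ f_{2\ast }$ both equal the common standard homomorphism $\rho _0$ determined by $\alpha _i\mapsto e$ and $\beta _i\mapsto h_i$ (or $e$). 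Since two homomorphisms that agree on the generators $\alpha _i,\beta _i$ coincide, this gives $\rho _1\circ f_{1\ast }=\rho _2\circ f_{2\ast }$, hence $\rho _2=\rho _1\circ (f_1f_2^{-1})_{\ast }$, which is exactly (2).
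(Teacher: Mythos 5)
Your skeleton coincides with the paper's proof at every step except the crucial middle one: you normalize via Theorem~\ref{psl}, factor $\rho$ through $\pi_1(V_g)\cong F_g$, realize automorphisms of $F_g$ by handlebody classes (this is Proposition~\ref{F_g} in the paper), use Griffiths' characterization (Proposition~\ref{chara}) to see that the normalization $\rho(\alpha_i)=e$ survives, and deduce the equivalence of (1) and (2) formally, exactly as the paper does. The divergence is the reduction of the generating tuple $(b_1,\dots,b_g)$ of $P=H_1\ast\cdots\ast H_m$, and there your sketch has a genuine gap --- the very one you flag as the ``main obstacle.'' A Nielsen reduction in a free product can only be expected to pin each surviving entry down to a generator of a \emph{conjugate} $c_iH_ic_i^{-1}$ of a Kurosh factor, with a priori different conjugators $c_i$. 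Nielsen moves on the tuple realize precisely precomposition of $\bar\rho$ with elements of $\Aut(F_g)$; conjugating one entry by a word in the \emph{other} entries is such a move, but conjugating an entry by an arbitrary element of the group generated by the whole tuple is not (for instance, $x\mapsto wxw^{-1}$, $y\mapsto y$ with $w=yxy^{-1}$ is not an automorphism of $F_2$, as one checks via the commutator test for bases). Your proposed absorption of ``the residual simultaneous conjugation'' by an inner automorphism of $\pi_1(\Sigma_g)$ only removes a \emph{common} conjugator. Since the theorem requires $(\rho\circ f_{\ast})(\beta_i)$ to equal the fixed generator $1\in H_i$ of the \emph{given} factors --- and your proof of (1)$\Rightarrow$(2) needs $\rho_1$ and $\rho_2$ to reach the \emph{same} standard homomorphism --- the per-factor conjugacy ambiguity must actually be eliminated, and your outline does not do this.

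The paper sidesteps the issue entirely by invoking Grushko's theorem in Stallings' subgroup formulation (Theorem~\ref{Stallings}) instead of a tuple reduction: applying it $(m-1)$ times to the epimorphism $\eta\colon F_g\to P$ produces subgroups $G_j\leq F_g$ with $\eta(G_j)=H_j$ \emph{on the nose} (not merely up to conjugacy) and $F_g=G_1\ast\cdots\ast G_m$; then, because each $H_j$ is cyclic, Proposition~\ref{key} applied within each free factor $G_j$ yields $\gamma\in\Aut(F_g)$ carrying the standard basis to elements mapping exactly to the prescribed generators $1\in H_j$ (or to $e$). If you replace your Nielsen-reduction step with this citation, the rest of your argument goes through essentially as written; proving the exact-form reduction directly by the Nielsen method is possible in principle, but it amounts to reproving the Stallings--Grushko statement, which is precisely the care you anticipated would be required.
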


We call $\rho\circ f_{\ast }$ in Theorem~\ref{hom-psl} the normal form of $\rho $ with respect to the action of the mapping class group, or simply, we say that it is of the normal form. 
Now we define the map $$p_{\ast }\colon \mathrm{Hom}(\pi _1(\Sigma _g), SL(2; \Z))
\to \mathrm{Hom}(\pi _1(\Sigma _g), PSL(2; \Z))$$ by $p _{\ast }(\widetilde{\rho })=p \circ \widetilde{\rho }$. 
Then for each homomorphism $\rho\colon \pi _1(\Sigma _g)\to PSL(2; \Z)$, 
$p _{\ast }^{-1}(\rho )$ is the set of all lifts $\widetilde{\rho}$ of $\rho $ with respect to $p$, 
and we have $\# p _{\ast }^{-1}(\rho )=2^{2g}$. 
When $\rho $ is of the normal form, we obtain the following result about the number of orbits of the action of $\mathcal{M}_g$ to all the $2^{2g}$ lifts of $\rho $. 

\vspace{8pt}

\begin{theorem}~\label{hom-sl}
Let $\rho \in \mathrm{Hom}(\pi _1(\Sigma _g), PSL(2;\Z))$ be of the normal form. 
If $m>l$, then there exist just $2^k$ $\mathcal{M}_g$-orbits in $p_{\ast }^{-1}(\rho )$ 
corresponding to the choices of the signs of 
$$\widetilde{\rho}(\alpha _1)=\pm E_2, \; \cdots , \; \widetilde{\rho}(\alpha _k)=\pm E_2.$$
On the other hand, if $m=l$, then there exist $2^{k+1}$ orbits in $p_{\ast }^{-1}(\rho )$, 
whose details are as follows;  
there are $2^k$ orbits with $-E_2\not \in \Im (\widetilde{\rho})$ corresponding to the choices of 
$\widetilde{\rho}(\beta _1), \; \cdots , \; \widetilde{\rho}(\beta _k)$, 
and another $2^k$ orbits with $-E_2\in \Im (\widetilde{\rho})$ corresponding to the choices of the signs of
$$\widetilde{\rho}(\alpha _1)=\pm E_2, \; \cdots , \; \widetilde{\rho}(\alpha _k)=\pm E_2. $$
\end{theorem}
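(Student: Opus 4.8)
The plan is to \emph{linearize} the problem. Fix the base lift $\widetilde{\rho}_0\in p_{\ast}^{-1}(\rho)$ determined by $\widetilde{\rho}_0(\alpha_i)=E_2$ for all $i$ and by taking, for each $\beta_i$, the canonical lift of $\rho(\beta_i)$ (the order-$3$ lift when $\rho(\beta_i)$ has order $3$, an order-$4$ lift when it has order $2$, and $E_2$ when $\rho(\beta_i)=e$). Since $\{\pm E_2\}=\Ker p$ is central, any two lifts of $\rho$ differ by a homomorphism $\pi_1(\Sigma_g)\to\{\pm E_2\}\cong\Z_2$, so $p_{\ast}^{-1}(\rho)$ is a torsor over $\Hom(\pi_1(\Sigma_g),\Z_2)\cong H^1(\Sigma_g;\Z_2)\cong(\Z_2)^{2g}$. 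Writing a lift as $\widetilde{\rho}_0\cdot d$ with $d=(a_1,b_1,\dots,a_g,b_g)$, $a_i=d(\alpha_i)$, $b_i=d(\beta_i)$, the identity $(\widetilde{\rho}_0\cdot d)\circ f_{\ast}=(\widetilde{\rho}_0\circ f_{\ast})\cdot f^{\ast}d$ shows that $\mathrm{Stab}_{\mathcal{M}_g}(\rho)$ acts on $(\Z_2)^{2g}$ affinely, with linear part the natural $\mathrm{Sp}(2g;\Z_2)$-action on $H^1(\Sigma_g;\Z_2)$ and translation $c_f$ given by $\widetilde{\rho}_0\circ f_{\ast}=\widetilde{\rho}_0\cdot c_f$. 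Because two lifts in a common $\mathcal{M}_g$-orbit necessarily have equal image in $\Hom(\pi_1,PSL(2;\Z))$, the $\mathcal{M}_g$-orbits inside $p_{\ast}^{-1}(\rho)$ are exactly the orbits of this affine action, and counting them becomes linear algebra over $\Z_2$.

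The affine generators come from Dehn twists $t_\gamma$ along simple closed curves $\gamma$ with $\rho(\gamma)=e$: such $t_\gamma$ lie in $\mathrm{Stab}(\rho)$ (inserting $\gamma$ is invisible to $\rho$), and act on a lift by inserting the central element $\widetilde{\rho}(\gamma)\in\{\pm E_2\}$ wherever $\gamma$ meets a generator. First I would record the single-handle moves: $t_{\alpha_i}$ realizes the transvection $b_i\mapsto b_i+a_i$, and when $\rho(\beta_i)$ has finite order $N_i$ the twist $t_{\beta_i}^{N_i}\in\mathrm{Stab}(\rho)$ contributes $a_i\mapsto a_i+N_i b_i$ together with a translation of $a_i$ precisely when $\widetilde{\rho}_0(\beta_i)^{N_i}=-E_2$. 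On an order-$2$ handle these, with the involution $\alpha_i\mapsto\alpha_i^{-1},\beta_i\mapsto\beta_i^{-1}$, generate all four translations of $(a_i,b_i)$; on an order-$3$ or trivial handle $t_{\alpha_i}$ and $t_{\beta_i}^{N_i}$ generate $\mathrm{GL}(2;\Z_2)$, with orbits $\{0\}$ and the three nonzero vectors; on a free handle only $t_{\alpha_i}$ survives, leaving $\{(0,0)\}$, $\{(0,1)\}$, $\{(1,0),(1,1)\}$. Next I would bring in the essential cross-handle moves: twists along $\rho$-trivial curves band-summing generators of distinct handles (e.g. a simple closed curve in the class $\alpha_i\alpha_j$, for which $\widetilde{\rho}(\gamma)=(-1)^{a_i+a_j}E_2$), which transport $-E_2$ between handles and amalgamate the per-handle ``$-E_2$-content'' bits into a single global bit.

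Carrying this out, the complete affine invariants should reduce to: (i) the single class detecting whether $-E_2\in\Im(\widetilde{\rho})$ (equivalently whether $\widetilde{\rho}$ is a genuine lift of $\rho$), and (ii) the coordinates $a_1,\dots,a_k$ of the free handles, i.e.\ the signs of $\widetilde{\rho}(\alpha_1),\dots,\widetilde{\rho}(\alpha_k)$. When $-E_2\in\Im(\widetilde{\rho})$ the transport frees every $b_i$ and every order-$2$/order-$3$/trivial coordinate, so the orbit is pinned down by $(a_1,\dots,a_k)$, giving $2^k$ orbits. When $m=l$ there is in addition the family with $-E_2\notin\Im(\widetilde{\rho})$, where every $a_i$ is forced to $0$ and the surviving invariants are the lifts $\widetilde{\rho}(\beta_1),\dots,\widetilde{\rho}(\beta_k)$ (distinguished since $\langle\widetilde{\rho}(\beta_i)\rangle$ and $\langle-\widetilde{\rho}(\beta_i)\rangle$ are different infinite cyclic subgroups), giving another $2^k$ orbits and $2^{k+1}$ in total.

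The main obstacle is the two-sided analysis of these invariants. For the upper bound I must produce enough cross-handle twists along $\rho$-trivial simple closed curves to annihilate all per-handle bits and all $b_i$ once $-E_2$ is present; the delicacy is that the curves effecting a given amalgamation are $\rho$-trivial only because of cancellations governed by the free-product decomposition of $\Im(\rho)$, so exhibiting them needs care. For the lower bound I must show the functionals $d\mapsto a_i$ ($1\le i\le k$) are genuinely invariant under the \emph{whole} affine group, not merely under individual twists: the key point is that any $\rho$-trivial class meeting $\alpha_i$ would force $\theta_i=\rho(w)$ with $w$ supported on the other free factors, impossible in $\Z_2\ast\Z_3$, so no composite move can flip an isolated free-handle sign. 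Promoting this free-product obstruction to a statement about the affine action on $(\Z_2)^{2g}$ is the crux of the argument.
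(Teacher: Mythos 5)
Your framework is correct and appealing: since $\Ker p=\{\pm E_2\}$ is central, $p_{\ast}^{-1}(\rho)$ is indeed a torsor over $\Hom(\pi_1(\Sigma_g),\Z_2)\cong(\Z_2)^{2g}$, two lifts of the same $\rho$ related by $f$ force $f\in\mathrm{Stab}_{\mathcal{M}_g}(\rho)$, and the orbit count becomes that of an affine action. Your per-handle twist computations are essentially right, and your upper-bound moves parallel the paper's normalizations: the order-$4$ translations match Proposition~\ref{finite order}, and the cross-handle ``transport of $-E_2$'' is exactly what Proposition~\ref{key prop} accomplishes (there via an explicit re-splitting embedding $\phi$ with $\phi_{\ast}(\beta)=\tilde{\beta}_2\tilde{\alpha}_1^{-1}$, rather than twist calculus — the delicacy you flag is real but completable). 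Likewise your image-based invariants correctly separate the $m=l$ families: $\Im(\widetilde{\rho})$ is manifestly orbit-invariant, and when $-E_2\notin\Im(\widetilde{\rho})$ the subgroups $\langle\widetilde{\rho}(\beta_i)\rangle$ versus $\langle-\widetilde{\rho}(\beta_i)\rangle$ distinguish the $2^k$ lifts, just as in Propositions~\ref{2,4,6} and~\ref{cases}(2).

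The genuine gap is the lower bound for the $2^k$ orbits indexed by the free-handle signs when $-E_2\in\Im(\widetilde{\rho})$ (and in the case $m>l$). Your free-product observation does prove something: if $\rho(\gamma)=e$ then for $1\leq i\leq k$ the $\beta_i$-coefficient of $[\gamma]$ in $H_1(\Sigma_g;\Z)$ vanishes (it maps to the $\Z$-coordinate of the abelianization of $\Im(\rho)$), so $\langle[\gamma],[\alpha_i]\rangle=0$ in $\Z_2$ and every twist along a $\rho$-trivial curve — and, one checks, each of your other listed generators — fixes $a_1,\ldots,a_k$. But this only shows the \emph{listed} moves preserve the signs. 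The orbits are those of the full group $\mathrm{Stab}_{\mathcal{M}_g}(\rho)$, which is not known to be generated by your catalogue; for an arbitrary $f$ with $\rho\circ f_{\ast}=\rho$, the element $f_{\ast}(\alpha_i)$ is merely some word $w\in\Ker\rho$, and the sign of $\widetilde{\rho}(w)$ a priori depends on the $a_j$ of every handle and on the $b_j$ of the $\Z_3$-handles, whose exponent sums are controlled only mod $3$ — which says nothing mod $2$. So the free-product obstruction does not ``promote'' to the whole affine action by algebra alone; your own phrasing (``the crux of the argument'') concedes that this step is missing. This is precisely where the paper has to work hardest: Proposition~\ref{infinite order} establishes the invariance topologically, extracting from a hypothetical identification an embedding $\phi\colon\Sigma_{g-1}\setminus D^2\to\Sigma_g$, using vanishing intersection numbers and the induced map on $\pi_1(\Sigma_g)/N$ to isotope $\tilde{\alpha}_1$ off the image, and concluding $\rho(\tilde{\alpha}_1')=(B_1)^n=-E_2$, contradicting the infinite order of $B_1$ (the hypothesis that handle $1$ is a free handle is essential, exactly as your invariant claims). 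Without an argument of this kind, your proposal proves only that the number of orbits is at most $2^k$ (resp.\ at most $2^{k+1}$), not the stated equality.
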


The facts obtained by applying the results so far to the monodromies of orientable $T^2$-bundles over $\Sigma _g$ are summarized as follows. 
First, for any orientable $T^2$-bundle $\xi $ over $\Sigma _g$, 
the monodromy $$\widetilde{\rho }\in \Hom (\pi _1(\Sigma _g), SL(2;\Z))$$ 
is uniquely determined up to the actions of $\mathcal{M}_g$ and $SL(2;\Z)$. 
Applying Corollary~\ref{sl} to this $\widetilde{\rho }$, we can show that $\xi $ is decomposable into the fiber connected sum of $g$ pieces of $T^2$-bundles over $T^2$ (Theorem~\ref{thm:decomp}). 
According to Theorem~\ref{hom-psl}, the decomposition as a fiber connected sum can be taken in the form that respects the free product decomposition of the subgroup $\Im (p\circ \widetilde{\rho })\leq PSL(2;\Z)$. 
Based on these results, we have Theorem~\ref{hom-sl}, 
which is the classification of $\Hom (\pi _1(\Sigma _g), SL(2;\Z))$ up to the action of $\mathcal{M}_g$. 
Thus, taking into account of the conjugate action of $SL(2;\Z)$, 
Theorem~\ref{hom-sl} gives the complete classification of monodromies of orientable $T^2$-bundles. 
Adding a simple consideration about the Euler classes to it, 
we obtain the following classification of isomorphism classes of orientable $T^2$-bundles. 
This is our main theorem. 

\vspace{8pt}

\begin{theorem}~\label{main thm}
Let $\xi _1$ and $\xi _2$ be any two orientable $T^2$-bundles over $\Sigma _g$, and 
$$\widetilde{\rho} _1, \widetilde{\rho}_2\in \mathrm{Hom}(\pi _1(\Sigma _g), SL(2;\Z))$$ 
their monodromy representations. Suppose that both $p \circ \widetilde{\rho} _1$ and $p \circ \widetilde{\rho}_2$ are of the normal forms in the sense of Theorem~\ref{hom-psl}, 
and that $\xi _1$ and $\xi _2$ are described as 
\[ M(\ve _1E_2, B_1, \ldots , \ve _gE_2, B_g; m, n), \;\; M(\delta _1E_2, C_1, \ldots , \delta _gE_2, C_g; k, l), \] 
where $k,l,m,n\in \Z$, and for each $i$, $\ve _i$ and $\delta _i$ denote either $1$ or $-1$. 
Then $\xi _1$ and $\xi _2$ are isomorphic to each other if and only if there exists $Q\in SL(2; \Z)$ satisfying the following conditions: 
\begin{enumerate}
\item
$\mathrm{Im}(\widetilde{\rho}_1)=Q \mathrm{Im}(\widetilde{\rho}_2) Q^{-1}$. 
\item
The two lifts $\widetilde{\rho} _1$ and $Q \widetilde{\rho} _2 Q^{-1}$ of $p \circ \widetilde{\rho} _1$ are in the same $\mathcal{M}_g$-orbit. 
\item 
When $\ve_1=\cdots =\ve_g=1$, there exist ${\x}_1, \ldots, {\x}_g\in \Z^2$ such that 
\[ \begin{pmatrix}m\\n\end{pmatrix}-Q\begin{pmatrix}k\\l\end{pmatrix}=\sum _{i=1}^g(B_i-E_2){\x}_i, \]
and otherwise, there exist ${\x}_0, {\x}_1, \ldots, {\x}_g\in \Z^2$ such that 
\[\begin{pmatrix}m\\n\end{pmatrix}-Q\begin{pmatrix}k\\l\end{pmatrix}
=2{\x}_0+\sum _{i=1}^g(B_i-E_2){\x}_i.  \] 
\end{enumerate} 
\end{theorem}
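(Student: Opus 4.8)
The plan is to start from the general isomorphism criterion and then split the data of a bundle into its monodromy part and its Euler class part, treating each separately. By Proposition~\ref{isom} and Proposition~\ref{model}, the bundles satisfy $\xi_1\cong\xi_2$ if and only if there exist a mapping class $f\in\mathcal{M}_g$ and a matrix $Q\in SL(2;\Z)$ such that, after reparametrizing the base of $\xi_2$ by $f$ and changing the trivialization of its fiber $T^2$ by $Q$, both the monodromy representation and the Euler class of $\xi_2$ agree with those of $\xi_1$; here conjugation by $Q$ records the fiber trivialization change and $f_\ast$ records the change of marking of $\Sigma_g$. So the first task is to show that the monodromy half of this criterion is exactly conditions (1) and (2), and the second is to show that, once the monodromies have been identified, the Euler class half is exactly condition (3).

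For the monodromy part, conjugation by $Q$ sends $\widetilde{\rho}_2$ to $Q\widetilde{\rho}_2Q^{-1}$ and sends its image to the $Q$-conjugate, so condition (1) is precisely the equality $\mathrm{Im}(\widetilde{\rho}_1)=\mathrm{Im}(Q\widetilde{\rho}_2Q^{-1})$ of subgroups of $SL(2;\Z)$. Projecting by $p$ and invoking the equivalence (1)$\Leftrightarrow$(2) of Theorem~\ref{hom-psl}, equality of the induced $PSL(2;\Z)$-images forces $p\circ\widetilde{\rho}_1$ and $p\circ(Q\widetilde{\rho}_2Q^{-1})$ into a single $\mathcal{M}_g$-orbit; hence some $f\in\mathcal{M}_g$ identifies them, and relative to this $f$ the matrices $\widetilde{\rho}_1$ and $Q\widetilde{\rho}_2Q^{-1}$ become two lifts of one and the same $PSL$-representation. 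Condition (2) states exactly that these two lifts lie in the same $\mathcal{M}_g$-orbit, which by the orbit count of Theorem~\ref{hom-sl} is the sharp condition for the full $SL(2;\Z)$-monodromies to be $\mathcal{M}_g$-equivalent after the conjugation $Q$.

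For the Euler class part, I would fix the now-identified normal-form monodromy $\widetilde{\rho}_1$ and compute the receiving group. By Poincar\'e duality on the closed oriented surface $\Sigma_g$, one has $H^2(\Sigma_g;\Z^2_{\widetilde{\rho}_1})\cong\Z^2/N$, where $N$ is the group of coinvariants, generated by the elements $v-\widetilde{\rho}_1(\gamma)v$ over the generators $\gamma$ and all $v\in\Z^2$. In normal form $\widetilde{\rho}_1(\alpha_i)=\ve_iE_2$ and $\widetilde{\rho}_1(\beta_i)=B_i$, so the $\beta_i$ contribute $\sum_{i=1}^g\mathrm{Image}(B_i-E_2)$ and each $\alpha_i$ contributes $(1-\ve_i)\Z^2$, which vanishes when $\ve_i=1$ and equals $2\Z^2$ when $\ve_i=-1$. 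Hence $N=\sum_{i=1}^g(B_i-E_2)\Z^2$ when all $\ve_i=1$, and $N=2\Z^2+\sum_{i=1}^g(B_i-E_2)\Z^2$ otherwise, matching exactly the two cases of condition (3). Since changing the fiber trivialization by $Q$ carries the representative $\binom{k}{l}$ of the Euler class of $\xi_2$ to $Q\binom{k}{l}$, the two classes agree in $\Z^2/N$ precisely when $\binom{m}{n}-Q\binom{k}{l}\in N$, which is condition (3).

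The main obstacle I anticipate is the bookkeeping required to decouple the two changes of data cleanly: the mapping class $f$ reparametrizes the base and therefore acts on both the monodromy and on the representative $(m,n)$ of the Euler class, while $Q$ acts on both the monodromy and on the coefficient module $\Z^2$. One must verify that, after $f$ has been chosen to realize the normal form, the residual contributions of $f_\ast$ to the Euler class representative already lie in the subgroup $N$ computed above, so that no further identifications are introduced and the reduction to condition (3) is exact. In particular, showing that the ``$2\x_0$'' term is present if and only if some $\ve_i=-1$ — equivalently, that $-E_2\in\mathrm{Im}(\widetilde{\rho}_1)$ is what contributes the class $2\Z^2$ to the coinvariants — is the delicate step linking the sign data of the lift to the Euler class computation, and it is here that the case distinction of Theorem~\ref{hom-sl} must be matched against the coinvariant computation.
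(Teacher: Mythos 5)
Your proposal is correct and follows essentially the same route as the paper's proof: both reduce the statement to Proposition~\ref{isom}, identify conditions (1)--(2) with the existence of $f\in\mathcal{M}_g$ and $Q\in SL(2;\Z)$ satisfying $Q\widetilde{\rho}_2Q^{-1}=\widetilde{\rho}_1\circ f_{\ast}$ (via Theorems~\ref{hom-psl} and~\ref{hom-sl}), and read off condition (3) as equality of Euler classes in the coinvariant module $\Z^2/N$, with $N=\sum_i(B_i-E_2)\Z^2$ or $2\Z^2+\sum_i(B_i-E_2)\Z^2$ according to the signs $\ve_i$. The ``delicate step'' you flag at the end is in fact immediate: since $(\rho(\gamma\delta)-E_2)v=(\rho(\gamma)-E_2)\rho(\delta)v+(\rho(\delta)-E_2)v$, the submodule $N$ is generated by the contributions of the generators alone, so the $2{\x}_0$ term appears exactly when some $\ve_i=-1$, and in the case where $-E_2\in\Im(\widetilde{\rho}_1)$ only through a power of some $B_i$ (all $\ve_i=1$), no discrepancy arises because $B_i^2-E_2=(B_i-E_2)(B_i+E_2)$ already places $2\Z^2$ inside $\sum_i(B_i-E_2)\Z^2$.
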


Now let us reconsider about the existence of compatible symplectic structures on a $T^2$-bundle over a surface 
under the circumstance that the classification of the isomorphism classes has been done. Then the condition of Geiges and Walczak that we mentioned at the beginning can be briefly summarized as follows. 

\begin{theorem}[Theorems~\ref{Euler} and~\ref{compatible}]
Let $g$ be a non-negative integer. Then an orientable $T^2$-bundle $$\pi \colon M(A_1, B_1, \ldots , A_g, B_g; m, n)\to \Sigma _g$$ admits a compatible symplectic structure if and only if its Euler class is a torsion. 
This condition is also equivalent to that $\pi $ is not isomorphic to $$M(E_2, E_2, \ldots , E_2, E_2; m, 0) \; (m\ne 0) \;\; \text{nor} \;\; M(E_2, C^k, \ldots , E_2, E_2; m, n) \; (n\ne 0), $$ where $C=\begin{pmatrix} 1&1 \\ 0&1 \end{pmatrix}$ and $k\in \Z$. 
\end{theorem}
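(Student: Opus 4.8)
The plan is to establish the two stated equivalences separately: first the cohomological criterion (a compatible symplectic structure exists if and only if the Euler class is torsion), and then the identification of the bundles with non-torsion Euler class with the two explicit families. For the first equivalence I would start from Thurston's criterion \cite{Th76} specialized to surface bundles over surfaces: since every $\Sigma _g$ carries a symplectic (area) form and $\mathrm{Diff}_+(T^2)$ preserves $[\omega _{T^2}]$, the total space $M$ admits a symplectic form restricting to an area form on each fiber (i.e.\ a compatible symplectic structure) if and only if the fiber class $[T^2]$ is nonzero in $H_2(M;\R)$, equivalently the restriction map $H^2(M;\R)\to H^2(T^2;\R)$ is surjective.

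To decide when this restriction is onto, I would run the Leray--Serre spectral sequence of $T^2\to M\to \Sigma _g$ with real coefficients in the local system determined by $\widetilde{\rho }$. The fiber class survives to $E_\infty ^{0,2}$ exactly when the differential $d_2\colon E_2^{0,2}=H^0(\Sigma _g;H^2(T^2))\to E_2^{2,1}=H^2(\Sigma _g;H^1(T^2))$ vanishes: all higher differentials out of $E^{0,2}$ are forced to vanish because $H^3(\Sigma _g)=0$, and $E_2^{0,2}\cong \R$ because orientation preservation makes the monodromy act trivially on $H^2(T^2)$. The crucial point, which is essentially the content of the Geiges--Walczak refinement \cite{Ge92,Wa05}, is that under Poincar\'e duality on the fiber ($H^1(T^2)\cong H_1(T^2)=\Z^2$) this $d_2$ is evaluation against the Euler class, so that $d_2=0$ precisely when the real image of the Euler class vanishes. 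Since $\Sigma _g$ is compact, $H^2(\Sigma _g;\Z^2_{\widetilde{\rho }})$ is finitely generated, so its real image vanishes exactly when the Euler class is a torsion element; this closes the first equivalence.

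For the second equivalence I would compute $H^2(\Sigma _g;\Z^2_{\widetilde{\rho }})$ by Poincar\'e duality with local coefficients as the coinvariant group $\Z^2/\langle (\widetilde{\rho }(\gamma )-E_2)v : \gamma \in \pi _1(\Sigma _g),\, v\in \Z^2\rangle$, under which the Euler class is represented by $(m,n)$; these are exactly the relations appearing in Condition (3) of Theorem~\ref{main thm}, with the extra $2\x_0$ coming from a factor $A_i=-E_2$ (since $-E_2-E_2=-2E_2$). The Euler class is torsion iff its image in the real coinvariant space $\R^2/\langle (\widetilde{\rho }(\gamma )-E_2)v\rangle $ is zero, so a non-torsion Euler class forces this real coinvariant space to be nonzero. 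The heart of the classification is then the elementary linear-algebra fact that for a subgroup of $SL(2;\Z)$ the real coinvariants are nonzero only if $-E_2\notin \Im (\widetilde{\rho })$ and $\Im (\widetilde{\rho })$ is either trivial or conjugate to an infinite cyclic parabolic group $\langle C^k\rangle $: any occurrence of $-E_2$, or any hyperbolic or elliptic element, makes some $\widetilde{\rho }(\gamma )-E_2$ invertible and kills the coinvariants. Feeding this into the normal form of Theorem~\ref{hom-psl} (so $A_i=\ve _iE_2$), the condition $-E_2\notin \Im (\widetilde{\rho })$ forces every $\ve _i=1$, and the one surviving Kurosh factor must be a single copy of $\Z$.

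It then remains to normalize by the residual $SL(2;\Z)$-conjugation. When $\Im (\widetilde{\rho })$ is trivial, all $A_i=B_i=E_2$ and conjugation acts on $(m,n)\in \Z^2$; since $SL(2;\Z)$ can carry any vector to $(\gcd(m,n),0)$, the bundle is isomorphic to $M(E_2,\ldots ,E_2;m,0)$, whose Euler class is non-torsion (hence non-symplectic) iff $m\ne 0$, giving the first family. When $\Im (\widetilde{\rho })\cong \Z$ is parabolic, conjugation arranges $B_1=C^k$ with all other $A_i=B_i=E_2$; since $C^k-E_2$ has image the first coordinate axis, the real coinvariants equal the second coordinate $\R$ and the Euler class maps to $n$, so it is non-torsion iff $n\ne 0$, giving the second family. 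The step I expect to be the main obstacle is the identification of the spectral-sequence differential $d_2$ with the Euler class, keeping the Poincar\'e-duality isomorphism and the local-coefficient sign conventions consistent; a secondary care is to make the enumeration of subgroups of $SL(2;\Z)$ genuinely exhaustive, and to verify that the conjugation normalization reduces each surviving case to exactly one of the two listed families without overlap.
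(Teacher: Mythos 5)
Your proposal is sound and its second half matches the paper, but the first equivalence is reached by a genuinely different route. The paper never reproves the symplectic criterion: it quotes Geiges and Walczak (Theorems~\ref{Geiges} and~\ref{Walczak}) verbatim and only checks that their case conditions amount to torsion of the Euler class, by assembling the $2\times 4g$ matrix $X$ with columns from $A_i-E_2$, $B_i-E_2$, using $b_1(M_0)=2g+2-\rank(X)$ to split into the trichotomy trivial/parabolic/other, and computing $\Im(X)$ in each case; Theorem~\ref{compatible} is then read off from this together with the normal forms of \S 5. You instead rederive the criterion from Thurston's theorem via the local-coefficient Serre spectral sequence, pivoting on the identification of $d_2\colon E_2^{0,2}\to E_2^{2,1}$ with (multiplication by) the Euler class under fiberwise Poincar\'e duality. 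That identification is true, and your reduction is consistent with the paper: under the $SL(2;\R)$-equivariant isomorphism $\R^2\cong (\R^2)^{*}$ given by the determinant pairing, vanishing of $e(\xi )$ in the real coinvariants is exactly Walczak's condition $nx_1-mx_2=0$ against invariant vectors. But the $d_2$ statement is not literally in \cite{Ge92, Wa05} and would need its own proof (say, an obstruction-theoretic comparison of the section obstruction with the transgression); this is precisely the work the paper sidesteps by citation, while in exchange your argument is uniform in $g$, where the paper treats $g=0$, $g=1$, $g\geq 2$ via three separate sources. One repair in your linear-algebra step: the elements killing the real coinvariants are not only $-E_2$ and hyperbolic or elliptic ones; since $\det (h-E_2)=2-\operatorname{tr}(h)$, every $h$ with $\operatorname{tr}(h)\ne 2$ does so (including the trace-$(-2)$ parabolics $-C^k$, which are none of the types you list), and two trace-$2$ parabolics with transverse fixed directions kill them jointly. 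With that fix the surviving monodromy groups are exactly the subgroups conjugate into $\left<C\right>$, and your normalization to the two excluded families via Theorem~\ref{hom-psl} (all $\ve _i=1$, a single $\Z$-factor, then $SL(2;\Z)$-conjugation and the $\gcd$ reduction for trivial monodromy) agrees with the paper's.
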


Finally, we note that Theorem~\ref{hom-psl} can be generalized to the case where the range of homomorphisms is the free product of a finite number of finite cyclic groups $\Z_{k_1}\ast \cdots \ast \Z_{k_n}$ (Theorem~\ref{hom-free products}). Namely, $\Hom (\pi _1(\Sigma _g), \Z_{k_1}\ast \cdots \ast \Z_{k_n})$ can be classified in a similar way as in Theorem~\ref{hom-psl}. 
Since $\Z_{k_1}\ast \cdots \ast \Z_{k_n}$ can be seen as the fundamental group of a connected sum of lens spaces, we obtain the following application of it. 

\begin{theorem}[Corollary~\ref{lens}]
For any continuous map $\phi $ from a closed orientable surface $\Sigma _g$ 
to the connected sum of a finite number of lens spaces, 
there exists a diffeomorphism $f\in \Diff _+(\Sigma _g)$ such that 
$\phi \circ f$ can be continuously extended to the handlebody $V_g$. 
\end{theorem}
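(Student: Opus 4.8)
The plan is to pass to induced homomorphisms, feed in the generalized normal form (Theorem~\ref{hom-free products}, the analogue of Theorem~\ref{hom-psl} for $\Z_{k_1}\ast \cdots \ast \Z_{k_n}$), and then run obstruction theory for the pair $(V_g,\Sigma_g)$. Write $L=L(p_1,q_1)\#\cdots \#L(p_n,q_n)$, so that by van Kampen $\pi_1(L)\cong G:=\Z_{p_1}\ast \cdots \ast \Z_{p_n}$, and consider the induced homomorphism $\phi_\ast \colon \pi_1(\Sigma_g)\to G$. I will assume $g\geq 1$; the case $g=0$ is genuinely different, since there $\Sigma_0=S^2$, $V_0=B^3$, and an extension exists exactly when $[\phi]=0\in \pi_2(L)$, so the statement must be read for $g\geq 1$. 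First I would apply the generalized classification to $\phi_\ast$ to obtain a mapping class $f\in \mathcal{M}_g$, represented by an $f\in \Diff_+(\Sigma_g)$, putting $\phi_\ast \circ f_\ast$ into normal form: $(\phi\circ f)_\ast(\alpha_i)=e$ for every $i$, while each $(\phi\circ f)_\ast(\beta_i)$ is a generator of a single Kurosh free factor $H_i$ of $\mathrm{Im}(\phi_\ast)$. Relabel $\phi\circ f$ as $\psi$, so $\psi_\ast$ now factors through $\pi_1(V_g)\cong F_g=\langle \beta_1,\dots,\beta_g\rangle$, matching the fact that the meridians $\alpha_i$ die in $V_g$.

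The extension question is now obstruction theory for $(V_g,\Sigma_g)$. Relative to $\partial V_g=\Sigma_g$, cutting along the $g$ meridian disks reduces $V_g$ to a ball, so the handlebody has exactly $g$ relative $2$-cells $D_1,\dots,D_g$ attached along $\alpha_1,\dots,\alpha_g$ and a single relative $3$-cell (an Euler-characteristic check gives $1-g=(2-2g)+g-1$, counting the surface, the $g$ two-cells, and the three-cell, as it must). The obstruction to extending $\psi$ over $D_i$ is the class $\psi_\ast(\alpha_i)\in \pi_1(L)$, which vanishes by the normal form; this is precisely the role of killing the meridians. The crucial geometric leverage is local: after the normal form, the loop $\gamma_i:=\psi\circ \beta_i$ can be taken to lie in a single summand when $H_i$ is finite cyclic, and in any case a regular neighborhood $N_i$ of $\gamma_i$ is a solid torus, so $\pi_2(N_i)=0$. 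Moreover each summand has universal cover $S^3$ and hence $\pi_2(L(p_j,q_j))=0$. Thus each handle (a once-punctured torus capped by its meridian disk) can be mapped into a neighborhood homotopy equivalent to a circle, where the only obstruction is the already-vanishing $\pi_1$-obstruction.

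The main obstacle is the final obstruction over the relative $3$-cell, which lives in $\pi_2(L)$ and is \emph{not} automatically zero: for a connected sum, $\pi_2(L)$ is generated by the connecting $2$-spheres and can be nontrivial, for instance $\pi_2(\R P^3\#\R P^3)=\Z$. My plan to kill it is to use the explicit normal form to homotope $\psi$ so that it factors through a graph $\Gamma\subset L$: send each handle neighborhood into its circle-neighborhood $N_i$ as above, and compress the complementary planar region (a $2g$-holed sphere, on whose boundary $\psi$ is already nullhomotopic because $\psi_\ast(\alpha_i)=e$) into the $1$-complex $\bigcup_i N_i$ joined by arcs through the basepoint. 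If $\psi$ so factors through a graph $\Gamma$, then it factors up to homotopy through a $K(\mathrm{free},1)$, and since $V_g$ itself is homotopy equivalent to a wedge of circles and $\psi_\ast$ factors through $\pi_1(V_g)$, the map on $\partial V_g$ extends over $V_g$ (the remaining obstruction lands in $\pi_2(\Gamma)=0$); composing with $\Gamma\hookrightarrow L$ gives the desired extension. Verifying that the planar region really does compress into such a graph—equivalently, that the surgered $2$-sphere bounding the relative $3$-cell maps nullhomotopically—is where the work lies, and I expect it to be the delicate point: the freedom in choosing the nullhomotopies of the $\alpha_i$, together with the meridional Dehn twists, which fix all the $\pi_1$-data but act on the top $\pi_2$-obstruction, should suffice to cancel any residual class in $\pi_2(L)$. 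Granting this, $\psi=\phi\circ f$ extends continuously over $V_g$, as claimed.
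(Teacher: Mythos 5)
Your overall architecture is exactly the paper's: put $\phi_\ast$ into the normal form of Theorem~\ref{hom-free products} (the paper notes that Claim~\ref{3.2} alone suffices), so that the meridians die and the map factors through $\pi_1(V_g)\cong F_g$, and then kill the relative obstructions over the $g$ meridian disks and the single $3$-cell. The divergence is at the last obstruction, and it is precisely where your proposal has its (self-acknowledged) gap. The paper disposes of this step in one line by asserting that $\pi_2\bigl(L(p_1,q_1)\# \cdots \# L(p_n,q_n)\bigr)=0$; you instead observe, correctly, that this group is generated by the connect-sum spheres and is nonzero once $n\geq 2$ (e.g.\ $\pi_2(\R P^3\#\R P^3)\cong \Z$, since the universal cover is $S^2\times \R$; the Bass--Serre tree computation gives $\pi_2\neq 0$ for all $n\geq 2$, $p_i\geq 2$). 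You then hope to cancel the residual class in $\pi_2$ using the freedom in the nullhomotopies of the $\alpha_i$ and meridional Dehn twists, flagging this as the delicate unverified point. That cancellation is in fact impossible, so the gap cannot be closed as proposed: take $L=L(2,1)\# L(2,1)$, let $\sigma\colon S^2\to L$ be the essential connect-sum sphere, $c\colon \Sigma_g\to S^2$ a degree-one collapse, and $\phi =\sigma\circ c$. For every $f\in \Diff(\Sigma_g)$ the homomorphism $(\phi\circ f)_\ast$ is trivial, so any extension $\Psi\colon V_g\to L$ of $\phi\circ f$ would be trivial on $\pi_1$ (because $\pi_1(\Sigma_g)\to \pi_1(V_g)$ is onto) and hence would lift to $\widetilde{L}\simeq S^2\times\R$. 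Then the lift of $\phi\circ f$ would send $[\Sigma_g]$ to $0$ in $H_2(\widetilde{L})\cong \Z$, since $[\Sigma_g]$ already dies in $H_2(V_g)=0$; but the lift equals $\tilde{\sigma}\circ c\circ f$ up to deck transformation, and $\tilde{\sigma}$ is a degree-one sphere, so the image of $[\Sigma_g]$ is $\pm 1$. Hence $\phi\circ f$ never extends, for any $f$ and any $g$ (so your $g=0$ caveat does not rescue the statement either).

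The upshot is twofold. First, your skepticism is vindicated: the ``fact'' $\pi_2=0$ invoked in the paper's proof of Corollary~\ref{lens} is false for $n\geq 2$, and the example above shows the corollary itself fails for genuine connected sums, since the obstruction you would need to kill is detected by the homology class of the lift, which is invariant (up to sign) under precomposition by diffeomorphisms and under your twist moves. Second, for $n=1$ (a single lens space, where $\pi_2=0$ because the universal cover is $S^3$) your argument and the paper's coincide and are complete: normal form kills the $\pi_1$-obstructions over the meridian disks and the final obstruction group vanishes. A correct statement must therefore either restrict to a single lens space summand, or impose an extra hypothesis on $\phi$ (in the trivial-$\pi_1$-image case, that the lift of $\phi$ to the universal cover kills the fundamental class), under which your compression-to-a-graph argument does go through.
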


In such a way, our technique is useful also in a situation apart from $T^2$-bundles, 
so it is expected that the range of applications expand in the future. 

\vspace{8pt}

This paper consists of the former part (\S2, 3), the latter part (\S4, 5) and the applications (\S6). 
Most arguments are the fusion of algebraic viewpoints and topological viewpoints throughout this article, but if we had to say, the former half is the algebraic part and the latter half is the topological one. 
The specific organization is as follows. 
\begin{itemize}
\item
In \S2, as a preliminary to \S3, we review some group theory (\S2.1), the mapping class group of handlebody $V_g$ (\S2.2) and charts for $G$-monodromies (\S2.3). In \S3, we prove the classification of $\Hom (\pi _1(\Sigma _g), PSL(2;\Z))$ up to the action of $\mathcal{M}_g$ (Theorem~\ref{hom-psl}) using all the tools prepared in \S2. We also discuss a generalization of Theorem~\ref{hom-psl} and its application. 
\item
In \S4, as a preliminary to \S5, we summarize known results about $T^2$-bundles over surfaces. 
After recalling about orientable $T^2$-bundles over $S^1$ (\S4.1), 
we review the monodromy and the Euler class of orientable $T^2$-bundles (\S4.2), 
$SL(2;\Z)$-bundles (\S4.3) and fiber connected sums (\S4.4). 
In \S4.5, as an application of Corollary~\ref{sl}, we prove that any orientable $T^2$-bundles over $\Sigma _g$ is decomposable as the fiber connected sum of $g$ pieces of $T^2$-bundles over $T^2$ (Theorem~\ref{thm:decomp}). 
Based on these preparations, in \S5, 
we first classify $\Hom (\pi _1(\Sigma _g), SL(2;\Z))$ up to the action of $\mathcal{M}_g$ 
and use it to prove the classification theorem of isomorphism classes of orientable $T^2$-bundles over $\Sigma _g$ (Theorem~\ref{main thm}). 
\item
In \S6, we explain the applications to symplectic geometry (Theorems~\ref{Euler},~\ref{compatible}, 6.6 and 6.7). 
\end{itemize}

\section{Preliminaries}
\subsection{Some group theory}
We fix the notations as follows. 
\begin{itemize}
\item 
Let $G$ be a group and $H$ its subgroup. In this case, we write $H\leq G$. 
If $H$ is a normal subgroup of $G$, then we write $H\trianglelefteq G$. 
\item
We denote by $F_n$ the free group generated by $n$ elements $a_1, \ldots , a_n$, and its automorphism group by $\Aut(F_n)$. 
\item
Let $G$ be a group, and $S=\{ s_1, \ldots , s_k\}$ a finite subset of it. 
In this case, $\langle S \rangle$ denotes the subgroup of $G$ generated by $S$. 
If there is no fear of confusion with the presentation of the free group, 
we also denote it by $\langle s_1, \ldots , s_k \rangle$. 
On the other hand, we denote the smallest normal subgroup containing $S$ 
by $N(S)$ or $\langle s_1, \ldots , s_k \rangle ^{G}$. 
\end{itemize}
First we recall about the group structures of $SL(2;\Z)$ and $PSL(2;\Z)$. 
As is well-known, the special linear group 
\[ SL(2;\Z)=\left\{ \begin{pmatrix} a&c\\b&d \end{pmatrix} \mid a, b, c, d\in \Z, ad-bc=1 \right\} \] 
is presented as \[ SL(2;\Z)=\langle s, t \mid s^4, s^2t^{-3} \rangle , \]
where 
\[ s= \begin{pmatrix} 0 & 1 \\ -1 & 0 \end{pmatrix}, \;\;  t= \begin{pmatrix} 0 & 1 \\ -1 & 1 \end{pmatrix}. \]
Thus $SL(2;\Z)$ is isomorphic to the free product with amalgamation $\Z_4\ast _{\Z_2}\Z_6$. 
On the other hand, the special projective linear group $PSL(2;\Z)$ is the quotient of $SL(2;\Z)$ by its center $\{ \pm E_2 \}$. Let $p \colon SL(2;\Z)\to PSL(2;\Z)$ be the quotient map and set $a=p (s), \; b=p (t)$. 
Then the presentation of $PSL(2;\Z)$ is given by \[ PSL(2;\Z)=\langle a, b \mid a^2, b^3 \rangle . \]
Namely, $PSL(2;\Z)$ is isomorphic to the free product $\Z_2\ast \Z_3$. 

Next we review known facts about free groups and free product of groups. 

\vspace{8pt}

\begin{theorem}[Kurosh's subgroup theorem \cite{Ku56}]~\label{Kurosh}
Let $G_1$ and $G_2$ be groups. 
Then any subgroup $H$ of the free product $G=G_1\ast G_2$ can be described in the form 
\[ H=F(X)\ast (\ast _{i\in I} g_i A_ig_i^{-1})\ast (\ast _{j\in J} f_jB_jf_j^{-1}),  \]
where $F(X)$ is the free group generated by a subset $X\subset G$, 
and $g_i, f_j\in G$, $A_i\leq G_1$ and $B_j\leq G_2$ for any $i\in I$ and $j\in J$. 
\end{theorem}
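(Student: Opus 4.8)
The plan is to prove the theorem by letting $H$ act on the Bass--Serre tree of the free product $G = G_1 \ast G_2$ and reading off the resulting graph-of-groups decomposition. Recall that $G$ acts on a tree $T$ whose vertex set is the disjoint union of the coset spaces $G/G_1$ and $G/G_2$, and whose geometric edges are indexed by $G = G/\{e\}$, the edge $g$ joining the vertices $gG_1$ and $gG_2$. This action is simplicial and without inversions; the stabilizer of the vertex $gG_1$ is the conjugate $gG_1g^{-1}$, that of $gG_2$ is $gG_2g^{-1}$, and crucially every edge stabilizer is trivial, since the amalgamated subgroup of a free product is $\{e\}$. The quotient graph $T/G$ is a single edge, which re-encodes the free product.

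Next I would restrict the action to the subgroup $H \leq G$, so that $H$ acts simplicially and without inversions on the same tree $T$, again with trivial edge stabilizers. By the structure theorem of Bass--Serre theory, $H$ is isomorphic to the fundamental group of the quotient graph of groups $T/H$, whose vertex groups are the $H$-stabilizers of a chosen set of vertex representatives and whose edge groups are the $H$-stabilizers of edge representatives. Because the edge stabilizers are trivial, the amalgamations along edges contribute nothing, and this fundamental group is simply the free product of the vertex groups together with a free group $F(X)$ whose rank equals the first Betti number of the underlying graph; equivalently, a free basis is realized by elements of $G$ corresponding to the edges of $T/H$ lying outside a chosen spanning tree, so that $X \subset G$ as required. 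A vertex of the first type is $gG_1$, with $H$-stabilizer $H \cap gG_1g^{-1}$; choosing a representative $g_i$ for each $H$-orbit gives a factor $H \cap g_iG_1g_i^{-1} = g_i\bigl(g_i^{-1}Hg_i \cap G_1\bigr)g_i^{-1} = g_iA_ig_i^{-1}$ with $A_i := g_i^{-1}Hg_i \cap G_1 \leq G_1$, and symmetrically the vertices of the second type yield factors $f_jB_jf_j^{-1}$ with $B_j := f_j^{-1}Hf_j \cap G_2 \leq G_2$. Collecting these factors produces exactly the asserted decomposition.

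The main obstacle, and the only genuinely nontrivial input, is the fundamental theorem of Bass--Serre theory itself: that a group acting without inversions on a tree is the fundamental group of the associated quotient graph of groups. Establishing it requires building a \emph{fundamental domain}, namely a lift of a spanning tree of $T/H$ together with a choice of edges realizing the remaining orbits, and then verifying by a normal-form or ping-pong argument that the natural homomorphism from the graph-of-groups fundamental group to $H$ is an isomorphism. In the self-contained combinatorial approach originally due to Kurosh, this same content instead takes the form of a careful induction on reduced words in $G_1 \ast G_2$: one organizes the elements of $H$ by their initial and terminal syllables, extracts a generating set, and checks that no relations survive beyond those of a free product of conjugated subgroups with a free group. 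Either way, the delicate bookkeeping needed to pin down the transversal elements $g_i, f_j$ and to confirm the absence of extra relations is the crux; the remaining passage to the stated form is formal.
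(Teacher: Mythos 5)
The paper itself gives no proof of this statement: Theorem~\ref{Kurosh} is quoted as a classical result with a citation to Kurosh's book, and only its conclusion is used downstream (to decompose subgroups of $PSL(2;\Z)\cong \Z_2\ast \Z_3$ and of $\Z_{k_1}\ast \cdots \ast \Z_{k_n}$). Your Bass--Serre argument is a correct, standard modern proof, and it is genuinely different from Kurosh's original combinatorial induction on reduced words. The key verifications all check out: the tree for $G_1\ast G_2=G_1\ast_{\{e\}}G_2$ has vertex set $G/G_1\sqcup G/G_2$ and edge set $G$, the action has no inversions because every edge joins vertices of the two different types, edge stabilizers are trivial, and the stabilizer computation $H\cap g_iG_1g_i^{-1}=g_i\bigl(g_i^{-1}Hg_i\cap G_1\bigr)g_i^{-1}$ correctly produces the factors $g_iA_ig_i^{-1}$ with $A_i\leq G_1$ (symmetrically for $G_2$); with trivial edge groups the fundamental group of the quotient graph of groups collapses to the free product of the vertex groups with a free group whose basis is carried by the edges outside a spanning tree, and those basis elements lie in $H\subset G$, so $X\subset G$ as the statement requires (for an infinite quotient graph the spanning tree exists by Zorn's lemma, and Bass--Serre theory applies verbatim). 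The only deferred ingredient is the structure theorem for groups acting on trees, which you flag explicitly; since the paper rests the entire theorem on a citation, resting your proof on a citation for that single standard result is a fair trade. What your route buys over Kurosh's original argument is conceptual transparency --- the transversal elements $g_i, f_j$ and the free part emerge automatically as orbit representatives and as the first Betti number of $T/H$, with no syllable bookkeeping --- at the cost of importing the Bass--Serre machinery, whereas the combinatorial proof is self-contained but considerably more delicate.
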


\vspace{8pt}

\begin{theorem}[Grushko \cite{Gr40}, see also \cite{St65}]~\label{Stallings}
For any surjective homomorphism $\phi $ from the free group $F_n$ to a free product $H=H_1\ast H_2$, 
there exist subgroups $G_1$ and $G_2$ of $F_n$ such that $\phi (G_1)=H_1$, $\phi (G_2)=H_2$ and $G_1\ast G_2=F_n$. 
\end{theorem}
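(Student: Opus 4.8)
The plan is to reduce the statement to a combinatorial fact about generating tuples of the free product $H=H_1\ast H_2$, exploiting that elementary Nielsen transformations of an $n$-tuple are realized by automorphisms of $F_n$. Fix free generators $x_1,\dots,x_n$ of $F_n$ and set $h_i=\phi(x_i)$; since $\phi$ is surjective, $(h_1,\dots,h_n)$ generates $H$. Every element $h\in H$ has a reduced syllable expression alternating between $H_1\setminus\{e\}$ and $H_2\setminus\{e\}$, and I write $\ell(h)$ for its syllable length and set $L(h_1,\dots,h_n)=\sum_i\ell(h_i)$. The three elementary Nielsen moves --- inverting an entry, right/left multiplying one entry by another, and permuting entries --- each come from an automorphism of $F_n$, so any finite sequence of them is induced by some $\psi\in\Aut(F_n)$, under which the tuple $(h_1,\dots,h_n)$ is replaced by $(\phi\circ\psi(x_1),\dots,\phi\circ\psi(x_n))$.

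The heart of the argument is \emph{Grushko's lemma}: within the Nielsen-equivalence class of $(h_1,\dots,h_n)$ one may choose a generating tuple of minimal complexity $L$, and then every entry of this tuple lies in $H_1\cup H_2\cup\{e\}$, i.e. has syllable length at most $1$. The proof is a cancellation analysis: if some entry had syllable length $\geq 2$, one examines the leading and trailing syllables of the entries and of their products and shows that a suitable Nielsen move $h_i\mapsto h_ih_j^{\pm1}$, chosen so that a syllable of $h_j$ cancels against an end syllable of $h_i$, strictly decreases $L$, contradicting minimality. Controlling these cancellations simultaneously across a whole generating set is delicate, and this is the step I expect to be the main obstacle; it is exactly the content of Grushko's original theorem (and the point where Stallings' folding of graphs gives an alternative, topological route).

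Granting the lemma, let $\psi\in\Aut(F_n)$ realize the reducing sequence and put $\phi'=\phi\circ\psi$, so that $\phi'(x_i)\in H_1\cup H_2\cup\{e\}$ for each $i$. Partition $\{1,\dots,n\}$ into $I_1$ and $I_2$ by placing $i$ in $I_1$ if $\phi'(x_i)\in H_1$ and in $I_2$ if $\phi'(x_i)\in H_2$, assigning entries that map to $e$ arbitrarily. Set $G_1'=\langle x_i : i\in I_1\rangle$ and $G_2'=\langle x_i : i\in I_2\rangle$; since these are generated by complementary subsets of a free basis, $F_n=G_1'\ast G_2'$, and by construction $\phi'(G_1')\leq H_1$ and $\phi'(G_2')\leq H_2$.

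It remains to upgrade these inclusions to equalities. Since $\phi'$ is surjective, $H_1\ast H_2=\langle\phi'(G_1'),\phi'(G_2')\rangle$ with $\phi'(G_1')\leq H_1$ and $\phi'(G_2')\leq H_2$; composing with the retraction $H_1\ast H_2\to H_1$ that kills $H_2$ sends this to $\phi'(G_1')$ on one hand and to $H_1$ on the other, forcing $\phi'(G_1')=H_1$, and symmetrically $\phi'(G_2')=H_2$. Finally set $G_1=\psi(G_1')$ and $G_2=\psi(G_2')$. Then $G_1\ast G_2=\psi(G_1'\ast G_2')=\psi(F_n)=F_n$, and $\phi(G_j)=\phi(\psi(G_j'))=\phi'(G_j')=H_j$ for $j=1,2$, as required.
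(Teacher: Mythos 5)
The paper offers no proof of this statement at all: it is quoted as a classical theorem of Grushko, with Stallings' paper cited for a topological proof, so there is no internal argument to compare against. That said, the parts of your proposal surrounding the key lemma are correct and standard: elementary Nielsen moves on the tuple $(\phi(x_1),\dots,\phi(x_n))$ are indeed realized by precomposing $\phi$ with automorphisms of $F_n$; complementary subsets of a free basis generate a free-product decomposition $F_n=G_1'\ast G_2'$; the retraction $H_1\ast H_2\to H_1$ killing $H_2$ correctly upgrades the inclusions $\phi'(G_j')\leq H_j$ to equalities; and transporting everything back through $\psi$ via $G_j=\psi(G_j')$ is fine.

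The genuine gap is the one you flag yourself: the whole argument is conditional on the lemma that some Nielsen-equivalent generating tuple has every entry in $H_1\cup H_2\cup\{e\}$, and that lemma is not a routine minimality check --- it \emph{is} Grushko's theorem, so granting it makes the reduction essentially circular as a proof. Moreover, the heuristic you sketch for it (at a tuple of minimal total syllable length $L$, an entry of syllable length $\geq 2$ admits a move $h_i\mapsto h_ih_j^{\pm1}$ strictly decreasing $L$) fails as stated: in a free product, multiplying two entries can cancel only part of a boundary syllable, or merge the two adjacent syllables into one nontrivial syllable, so a minimal-$L$ tuple need not have all entries of length $\leq 1$. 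The classical Nielsen-style proofs (Grushko's original argument, or the Grushko--Neumann treatment in Lyndon--Schupp) require a finer complexity than $\sum_i\ell(h_i)$ --- e.g.\ a lexicographic ordering on the sorted length sequence together with a careful case analysis of leading and trailing syllables across the entire tuple --- while Stallings' approach replaces the cancellation bookkeeping by folding a map of graphs. To turn your outline into a proof you would need to supply one of these arguments for the core lemma; everything after it is in order.
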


\begin{remark}
By Theorem~\ref{Kurosh}, the only decomposition of $F_n$ as a free product of two groups 
is described as $F_n\cong F_{i}\ast F_{n-i}$ for some $i$ with $0\leq i\leq n$. 
Thus $G_1$ and $G_2$ in Theorem~\ref{Stallings} are written as $G_1\cong F_i$ and $G_2\cong F_{n-i}$. 
More precisely, there exists an element $\gamma \in \Aut (F_n)$ such that 
\[ G_1=\gamma \left(\left< a_1, \ldots , a_i\right>\right), 
\; G_2=\gamma \left(\left< a_{i+1}, \ldots , a_n \right>\right). \] 
\end{remark}

\vspace{8pt}

\begin{theorem}[Nielsen \cite{Ni24}, see also \cite{AFV08}]~\label{AFV}
Let $i$ and $j$ be integers with $1\leq i\leq n$ and $1\leq j \leq n-1$. 
We define three automorphisms $\sigma _i$, $\tau _j$, $\eta $ of the free group $F_n$ as follows: 
\begin{eqnarray*}
\sigma _i\colon 
\begin{cases}
a_i\mapsto a_i^{-1}\\
a_k\mapsto a_k \;\; (k\ne i),  \;
\end{cases} 
\tau _j\colon 
\begin{cases}
a_j\mapsto a_{j+1}\\
a_{j+1}\mapsto a_j \\
a_k\mapsto a_k \;\; (k\ne j, j+1), \;
\end{cases}
\eta \colon 
\begin{cases}
a_1\mapsto a_2^{-1}a_1\\
a_2\mapsto a_2^{-1} \\
a_k\mapsto a_k \;\; (k> 2). 
\end{cases}
\end{eqnarray*}
Then $\sigma _i$ $(1\leq i\leq n)$, $\tau _{j}$ $(1\leq j\leq n-1)$ and $\eta $ 
form a generating system of $\Aut(F_n)$. 
\end{theorem}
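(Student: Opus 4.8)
The plan is to reduce the statement to the classical theorem of Nielsen, which asserts that $\Aut(F_n)$ is generated by the inversion $a_1\mapsto a_1^{-1}$ (fixing the other generators), the permutations of the generators $a_1,\dots,a_n$, and a single transvection $a_1\mapsto a_1a_2$ (fixing the rest). Granting this, two of the three ingredients are already present in the proposed list: the $\sigma_i$ are exactly the inversions, and since the adjacent transpositions $\tau_1,\dots,\tau_{n-1}$ generate the full symmetric group $S_n$ acting by permutation of the generators, every permutation automorphism lies in $\langle \tau_1,\dots,\tau_{n-1}\rangle$. Hence the entire burden of the argument reduces to exhibiting one transvection as a word in $\sigma_i$, $\tau_j$ and $\eta$.

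The key computation is that $\eta$ becomes an ordinary transvection after a single inversion. Using the convention $(f\circ g)(x)=f(g(x))$, I would compute
\[
(\sigma_2\circ\eta)(a_1)=\sigma_2(a_2^{-1}a_1)=a_2a_1,\qquad
(\sigma_2\circ\eta)(a_2)=\sigma_2(a_2^{-1})=a_2,
\]
while $a_k$ is fixed for $k>2$. Thus $\sigma_2\circ\eta$ is precisely the transvection $a_1\mapsto a_2a_1$. From this one transvection all remaining elementary Nielsen transformations follow by conjugation: conjugating by a suitable element of $\langle\tau_j\rangle$ moves the indices to realize every left transvection $a_i\mapsto a_ja_i$, conjugating by $\sigma_i$ turns a left transvection into a right one, and a further conjugation by $\sigma_j$ adjusts the sign of the multiplier. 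Together with the inversions and permutations already available, this recovers Nielsen's generating set and completes the proof.

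The main obstacle, such as it is, lies not in the algebra but in the bookkeeping: one must fix a precise version of Nielsen's theorem (there are several equivalent formulations, differing in whether the distinguished transvection is left or right and in which permutations are taken as generators) and then keep the composition convention consistent throughout, since $\eta$ is asymmetric and a sign error in the exponent of $a_2$ would produce the wrong word. I would also observe that the reduction is robust to the convention: with the opposite order $\eta\circ\sigma_2$ instead yields $a_1\mapsto a_2^{-1}a_1$, which is equally a transvection, so the argument goes through either way. The genuine content of the theorem is therefore just the observation that the single unusual generator $\eta$, once corrected by an inversion, is a standard transvection, after which the classical theory takes over.
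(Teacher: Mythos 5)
Your proposal is correct, but there is nothing in the paper to compare it against: the paper states this result as an imported theorem, citing Armstrong--Forrest--Vogtmann \cite{AFV08}, and gives no proof (the remark following it even notes that the relations from that paper are omitted as unnecessary). The content of \cite{AFV08} is in fact a full \emph{presentation} of $\Aut(F_n)$ on these generators, which requires substantially more work; your argument establishes only the generation statement, which is all the paper uses, and does so by the shortest self-contained route. The reduction is sound: granting Nielsen's classical theorem that $\Aut(F_n)$ is generated by an inversion, the permutations of the basis, and a single transvection, your key identity
\[
(\sigma_2\circ\eta)(a_1)=\sigma_2(a_2^{-1}a_1)=a_2a_1,\qquad (\sigma_2\circ\eta)(a_2)=a_2,\qquad (\sigma_2\circ\eta)(a_k)=a_k\;(k>2)
\]
checks out, and your conjugation bookkeeping is accurate: conjugating $a_1\mapsto a_2a_1$ by permutations in $\langle\tau_1,\ldots,\tau_{n-1}\rangle\cong S_n$ yields every left transvection $a_i\mapsto a_ja_i$, conjugating by $\sigma_1$ gives $a_1\mapsto a_1a_2^{-1}$, and a further conjugation by $\sigma_2$ fixes the sign, so all elementary Nielsen automorphisms lie in the subgroup generated by the $\sigma_i$, $\tau_j$, $\eta$. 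Your observation about robustness of the composition convention is also right, since $\eta\circ\sigma_2$ is the transvection $a_1\mapsto a_2^{-1}a_1$. The only thing I would ask you to make explicit is the precise citation for the version of Nielsen's theorem you invoke (e.g.\ Magnus--Karrass--Solitar or Lyndon--Schupp), since your whole argument rests on it; and note the trivial edge case $n=1$, where $\eta$ and the $\tau_j$ do not exist but $\Aut(F_1)\cong\Z_2$ is generated by $\sigma_1$ alone.
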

\begin{remark}
Theorem~\ref{AFV} itself can be deduced from a result of Nielsen \cite{Ni24}. 
On the other hand, Armstrong--Forrest--Vogtmann \cite{AFV08} determined 
all the relations among $\sigma _i$, $\tau_j$ and $\eta $ to provide an explicit presentation of $\Aut(F_n)$.  
\end{remark}

Since the abelianization of $F_n$ is $\Z^n$, 
each element $\gamma $ of $\Aut(F_n)$ canonically induces an element $\bar{\gamma }$ of $GL(n; \Z)$. 
Notice that $\bar{\sigma }_i$, $\bar{\tau }_j$, $\bar{\eta }$ which are induced by 
the generators $\sigma _i$, $\tau_j$, $\eta $ obtained in the above theorem generate $GL(n;\Z)$, 
since they are nothing but the three types of elementary transformations. 
Therefore, for any element in $GL(n; \Z)$, there exists an element in $\Aut(F_n)$ that induces it. 

\vspace{8pt}

\begin{prop}~\label{key}
For any surjective homomorphism $\phi \colon F_n\to \Z_k$, there exists an element $\gamma \in \Aut (F_n)$
such that $(\phi \circ \gamma )(a_1)=1$ and $(\phi \circ \gamma )(a_i)=0 \;\; (2\leq i\leq n)$. 
\end{prop}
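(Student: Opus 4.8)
The plan is to reduce the statement to a purely matrix-theoretic fact about $GL(n;\Z)$, using the discussion immediately preceding the proposition. A surjective homomorphism $\phi \colon F_n \to \Z_k$ factors through the abelianization $F_n \to \Z^n$, so $\phi$ is determined by the tuple $(c_1, \ldots, c_n) \in (\Z_k)^n$ where $c_i = \phi(a_i)$; surjectivity means these entries generate $\Z_k$, i.e. $\gcd(c_1, \ldots, c_k, k) = 1$ where I lift each $c_i$ to an integer. The goal is to find $\gamma \in \Aut(F_n)$ whose induced matrix $\bar{\gamma} \in GL(n;\Z)$ transforms this row vector so that the composite $\phi \circ \gamma$ sends $a_1 \mapsto 1$ and $a_i \mapsto 0$ for $i \geq 2$.

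First I would make precise how $\gamma$ acts on $\phi$. Writing the abelianized data as a row vector $\vect{c} = (c_1, \ldots, c_n)$ over $\Z$, the map $\phi \circ \gamma$ corresponds (after reduction mod $k$) to the row vector $\vect{c}\,\bar{\gamma}$. Since $\gcd(c_1, \ldots, c_n, k) = 1$, I first note that I may replace $\vect{c}$ by any integer lift and then reduce mod $k$ at the end. The key classical fact I would invoke is that a row vector $\vect{c} \in \Z^n$ can be carried by right multiplication by an element of $GL(n;\Z)$ to $(d, 0, \ldots, 0)$ where $d = \gcd(c_1, \ldots, c_n)$ — this is just the row-reduction / Smith-normal-form computation for a single row, achieved by successive elementary column operations (the Euclidean algorithm applied across the entries). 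Concretely, I would build the required $GL(n;\Z)$ matrix as a product of elementary matrices.

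Next I would handle the modular gcd. After the above step I reach the residue $(d \bmod k, 0, \ldots, 0)$ with $\gcd(d, k) = 1$, so $d$ is a unit in $\Z_k$. The remaining task is to rescale the first entry to $1$. Multiplying the first coordinate by a unit of $\Z_k$ is not directly an elementary $GL(n;\Z)$ operation, so here I would use the freedom in the other coordinates: since $\gcd(d,k)=1$ there are integers with $ud + vk = 1$, and I can realize the unit $u$ modulo $k$ by a column operation of the form $a_1 \mapsto a_1,\ a_1 \text{-entry adjusted using } a_2$, exploiting that a second column is available to absorb multiples of $k$. Because $n \geq 1$ and in the surjective case the gcd condition guarantees a unit, I can arrange the first entry to be congruent to $1$ mod $k$ while keeping the remaining entries $0$ mod $k$. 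Finally, since every matrix in $GL(n;\Z)$ is induced by some $\gamma \in \Aut(F_n)$ (as observed in the paragraph following Theorem~\ref{AFV}, using that $\bar{\sigma_i}, \bar{\tau_j}, \bar{\eta}$ generate $GL(n;\Z)$ as elementary transformations), I can lift the product of elementary matrices to an automorphism $\gamma$ of $F_n$, which is exactly the $\gamma$ demanded by the proposition.

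The main obstacle is the modular step rather than the integral row reduction, which is routine Euclidean algorithm. The subtlety is that scaling by a unit of $\Z_k$ is not an honest operation on $\Z^n$, so I must be careful to realize the adjustment through genuine $GL(n;\Z)$ column operations whose effect mod $k$ produces $1$ in the first slot without disturbing the zeros elsewhere; this requires at least a second coordinate to serve as a reservoir for multiples of $k$, and I would need to check the degenerate case $n=1$ separately (where surjectivity forces $\gcd(c_1,k)=1$ and the sign automorphism $\sigma_1$ together with the identification already suffices, since then $c_1$ itself must be a unit and can be normalized). Keeping track of the distinction between the integral lift and its reduction mod $k$ is the point where an error could easily creep in.
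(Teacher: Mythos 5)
Your proposal is correct and takes essentially the same route as the paper's proof: pass to the abelianization, carry the row vector $(\phi(a_1),\ldots,\phi(a_n))$ to $(1,0,\ldots,0)$ by elementary column operations in $GL(n;\Z)$, and lift the resulting matrix to an automorphism of $F_n$ via the observation following Theorem~\ref{AFV} that $\bar{\sigma_i},\bar{\tau_j},\bar{\eta}$ generate $GL(n;\Z)$. The paper writes out only the case $k=0$ and dismisses $k\neq 0$ with ``the same argument works,'' so your explicit treatment of the modular step --- reducing first to $(d,0,\ldots,0)$ with $\gcd(d,k)=1$ and then, for $n\geq 2$, using a second column as a reservoir (e.g.\ right multiplication by $\begin{pmatrix} u & -k \\ v & d \end{pmatrix}$ with $ud+vk=1$ gives $(du,-dk)\equiv(1,0)$ mod $k$), together with your flagging of the degenerate case $n=1$ --- supplies precisely the detail the paper compresses into its final sentence.
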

\begin{proof}
We discuss only the case where the range is $\Z$ ($k=0$). 
The $\Z$-linear map $\bar{\phi }\colon \Z^n\to \Z$ induced by $\phi $ 
is represented by a matrix \[ \begin{pmatrix} p_1&p_2& \cdots &p_n \end{pmatrix}, \] 
where $p_i$ ($1\leq i\leq n$) are integers such that their greatest common divisor is $1$. 
It can be transformed to \[ \begin{pmatrix} 1&0& \cdots &0 \end{pmatrix} \]
by a finite sequence of elementary column operations. 
By composing those operations, we obtain an invertible matrix $\bar{\gamma }\in GL(n; \Z)$. 
Then an element  $\gamma \in \Aut (F_n)$ that induces $\bar{\gamma }$ satisfies desired conditions. 
The same argument works even when $k\ne 0$. 
\end{proof}

\subsection{Mapping class group of handlebody of genus $g$}~\label{MGC}

Let $\mathcal{H}_g$ denote the mapping class group of the handlebody $V_g$. 
Since a self-diffeomorphism of $V_g$ induces that of $\Sigma _g$ when restricted to the boundary $\partial V_g=\Sigma _g$, 
$\mathcal{H}_g$ is naturally embedded into $\mathcal{M}_g$ because $V_g$ is an irreducible $3$-manifold. 
We denote the subgroup of $\mathcal{M}_g$ obtained as the image by $\mathcal{H}_g^{\ast }$. 

In this paper, the product of two elements $[f_1], [f_2]\in \mathcal{M}_g$ is defined by $$[f_1][f_2]=[f_2\circ f_1].$$
Accordingly, we regard the action of $\mathcal{M}_g$ on $\Sigma_g$ as a right action. 
This convention is necessary in order to make each monodromy a homomorphism (see \S~\ref{torus bdles}). 
In line with this, we also define the product on $\mathcal{H}_g$ and $\Aut(F_g)$ in the reverse order of composition, 
and regard their actions on $V_g$ and $F_g$ as right actions. Furthermore, $\mathcal{M}_g$ and $\mathcal{H}_g$ also act from the right on $\pi _1(\Sigma _g)$ and $\pi _1(V_g)$, respectively. Note that $\pi_1(V_g)$ is isomorphic to the free group $F_g$, since $V_g$ is homotopy equivalent to the bouquet of $g$ circles. 

Let $\iota \colon \Sigma_g \to V_g$ be the inclusion and $\iota_\ast \colon \pi_1(\Sigma_g) \to \pi_1(V_g) \cong F_g$ the induced homomorphism. Then $\ker (\iota _{\ast })$ is the smallest normal subgroup of $\pi _1(\Sigma _g)$ containing $\{ \alpha _1, \ldots, \alpha _g \}$, and $\iota _{\ast }$ maps $\beta _i^{-1}$ to the element $a_i$ of $F_g$. 

\vspace{8pt}

\begin{prop}[Griffiths \cite{Gr64}]~\label{chara}
For an element $f\in \mathcal{M}_g$, the following conditions are equivalent. 
\begin{enumerate}
\item
$f$ is an element of $\mathcal{H}_g^{\ast }$. 
\item
$f_{\ast }$ preserves the smallest normal subgroup of $\pi _1(\Sigma _g)$ 
containing $\{ \alpha _1, \ldots, \alpha _g \}$. 
\end{enumerate}
\end{prop}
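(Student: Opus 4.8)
The plan is to translate condition (2) into a statement about meridian disks, using the topological meaning of the normal subgroup $N:=\langle \alpha _1,\ldots ,\alpha _g\rangle ^{\pi _1(\Sigma _g)}$. Write $i\colon \Sigma _g=\partial V_g\hookrightarrow V_g$ for the inclusion. The one structural fact I would record at the outset is that $i_{\ast }\colon \pi _1(\Sigma _g)\to \pi _1(V_g)\cong F_g$ is the surjection sending the longitudes $\beta _1,\ldots ,\beta _g$ to a free basis and killing the meridians $\alpha _i$ (which bound the meridian disks $D_i$), so that $\ker i_{\ast }=N$ exactly. Note also that condition (2) is basepoint-independent, since inner automorphisms fix every normal subgroup.

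For $(1)\Rightarrow (2)$, suppose $f$ is the boundary restriction of a diffeomorphism $F\colon V_g\to V_g$. Naturality of $i_{\ast }$ (with a path chosen to handle basepoints) gives a commuting square relating $f_{\ast }$ and $F_{\ast }$; since $F_{\ast }$ is an isomorphism, $f_{\ast }$ carries $\ker i_{\ast }=N$ isomorphically onto itself, so $f_{\ast }(N)=N$. This direction is formal.

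The substance is $(2)\Rightarrow (1)$. Assume $f_{\ast }(N)=N$. Then each $f(\alpha _i)$ represents an element of $N=\ker i_{\ast }$, hence is null-homotopic in $V_g$; being a simple closed curve on $\partial V_g$, it bounds an embedded disk $D_i'\subset V_g$ by Dehn's Lemma (the Loop Theorem). The curves $f(\alpha _1),\ldots ,f(\alpha _g)$ are pairwise disjoint, being the images of the disjoint $\alpha _i$ under the homeomorphism $f$, and a standard innermost-circle argument, surgering along circles of intersection using irreducibility of the handlebody $V_g$, lets me isotope the $D_i'$ rel boundary to be pairwise disjoint. Completeness then comes for free: because $f$ is a homeomorphism and the $\alpha _i$ cut $\Sigma _g$ into a single planar ($2g$-holed sphere) piece, so do the $f(\alpha _i)$; cutting $V_g$ along the disjoint $D_i'$ caps the $2g$ holes by disk copies and produces a connected compact $3$-manifold with $2$-sphere boundary, which irreducibility forces to be a ball. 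Thus $\{D_i'\}$ is a complete meridian system. (If one prefers, the fact that $[f(\alpha _1)],\ldots ,[f(\alpha _g)]$ is again a basis of $\ker(H_1(\Sigma _g)\to H_1(V_g))$ gives an alternative route to completeness.)

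To finish, I have two complete meridian systems, $\{D_i\}$ bounded by $\{\alpha _i\}$ and $\{D_i'\}$ bounded by $\{f(\alpha _i)\}$, with $f(\partial D_i)=\partial D_i'$. Isotoping $f$ so that it sends each $D_i$ to $D_i'$ and cutting $V_g$ along the two systems into balls $B$ and $B'$, the induced map on the cut-open boundary sphere extends over the ball by Alexander's trick; regluing yields a diffeomorphism $F\colon V_g\to V_g$ restricting to $f$, so $f\in \mathcal{H}_g^{\ast }$. I expect the disjointness-and-completeness package in the previous paragraph to be the only delicate point: securing the embedded disks via Dehn's Lemma and assembling them into a complete system is where the genuine $3$-manifold topology lives, whereas the reduction to Alexander's trick and the direction $(1)\Rightarrow (2)$ are essentially formal.
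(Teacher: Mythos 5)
Your proof is correct, but note that the paper offers no argument of its own here: Proposition~2.8 is quoted from Griffiths \cite{Gr64} and used as a black box, so there is nothing internal to compare against. Your argument is the standard (and, in spirit, Griffiths-style) proof: identify the normal closure $N$ of the meridians with $\ker\bigl(i_{\ast}\colon \pi_1(\Sigma_g)\to\pi_1(V_g)\bigr)$, get the easy direction from naturality, and for the converse produce disks for the curves $f(\alpha_i)$ via Dehn's Lemma, make them disjoint by innermost-circle surgery, verify the system is complete, and extend $f$ over the cut-open balls by Alexander's trick. The skeleton is sound and all the main pillars are in place; two spots deserve one more line each. First, in the completeness step, ``irreducibility forces it to be a ball'' is applied to the cut-open manifold $W$, not to $V_g$ itself, so you should transfer irreducibility: a sphere in $W$ misses the disks $D_i'$, hence bounds a ball in $V_g$, and that ball is disjoint from every $D_i'$ (each $D_i'$ is connected with boundary on $\partial V_g$, which the ball avoids), so the ball lifts to $W$; then a sphere boundary component of an irreducible compact $3$-manifold does force a ball. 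Second, if condition (2) is read as the one-sided inclusion $f_{\ast}(N)\subseteq N$ rather than equality, you need the Hopficity of $\pi_1(\Sigma_g)/N\cong F_g$ to upgrade it to $f_{\ast}(N)=N$ before running your argument. A last cosmetic point: Alexander's trick produces a homeomorphism, so to land in the smooth mapping class group one should either smooth in dimension $3$ (Moise--Munkres) or invoke Smale's $\mathrm{Diff}(S^2)\simeq O(3)$ to extend smoothly over the ball; none of this affects the statement at the level of $\mathcal{M}_g$.
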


Now let us see several examples. 

\vspace{5pt}

\begin{example}~\label{H_g}
The following three examples all belong to $\mathcal{H}_g$. 
\begin{enumerate}
\item
We take a separating curve $\gamma _i$ as depicted in Figure~\ref{fig:half twist 1}. 
Then the half twist $k_i$ about $\gamma _i$ belongs to $\mathcal{H}_g$, 
and its action on $\pi _1(V_g)\cong F_g$ coincides with that of $\sigma_i$. 
\item
Let $\gamma _{j, j+1}$ be a separating curve depicted in Figure~\ref{fig:half twist 2}. 
Then the half twist $d_j$ about it belongs to  $\mathcal{H}_g$, 
and its action on $\pi _1(V_g)\cong F_g$ coincides with that of $\sigma _j \sigma _{j+1}\tau _j$. 
\item
Let $\delta _1$ and $\delta _2$ be the curves depicted in Figure~\ref{fig:half twist 3}. 
Then we can define the half twist $t_1$ about them, which belongs to $\mathcal{H}_g$. 
Its action on $\pi _1(V_g)\cong F_g$ coincides with that of $\tau _1\sigma _2\eta \sigma _2\tau _1$. 
\end{enumerate}
\end{example}

\begin{figure}
\begin{center}
\includegraphics[width=11cm, pagebox=cropbox,clip]{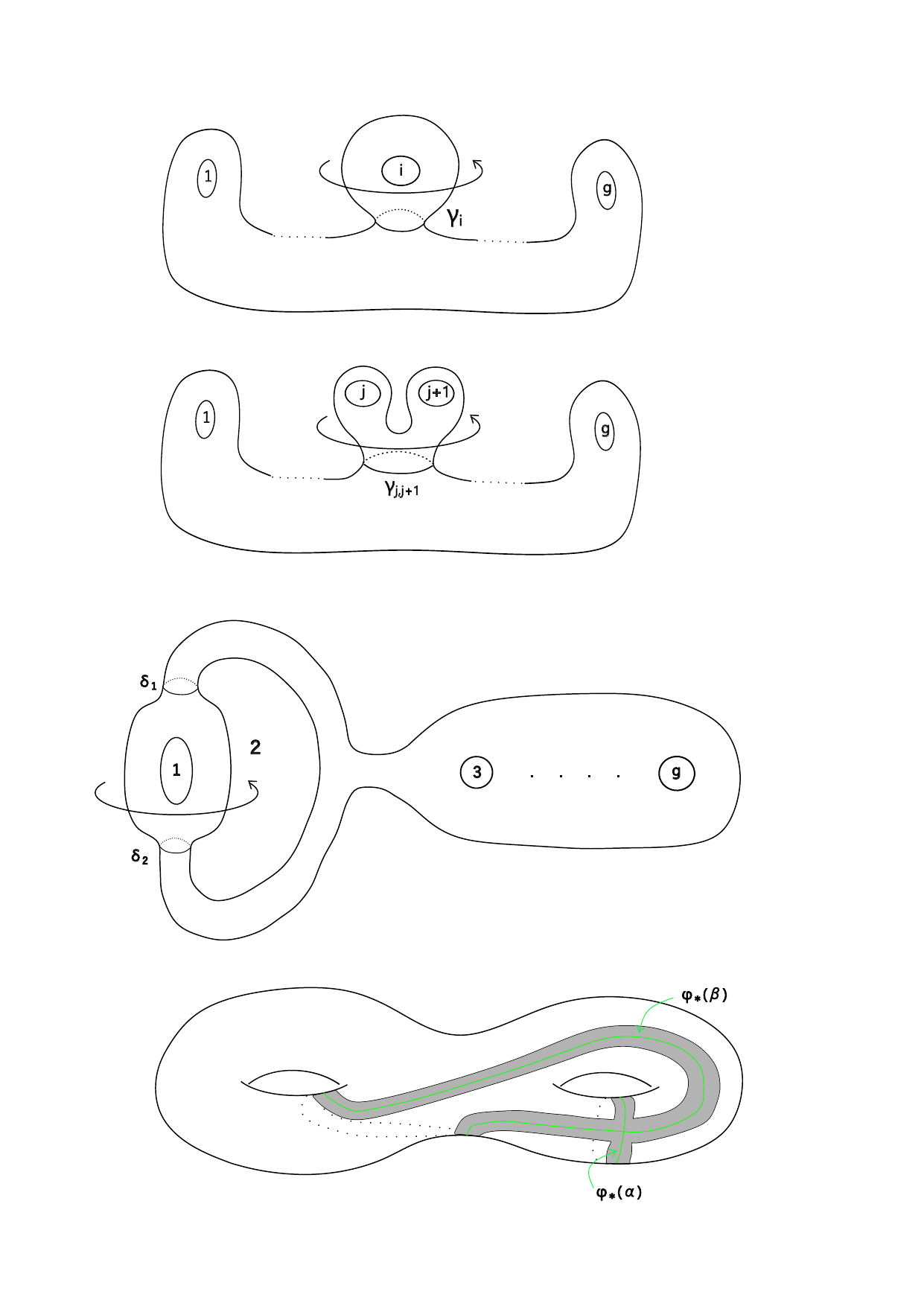}
\caption{$\gamma _i$ and the half twist $k_i$}
\label{fig:half twist 1}
\vspace{30pt}
\includegraphics[width=11cm, pagebox=cropbox,clip]{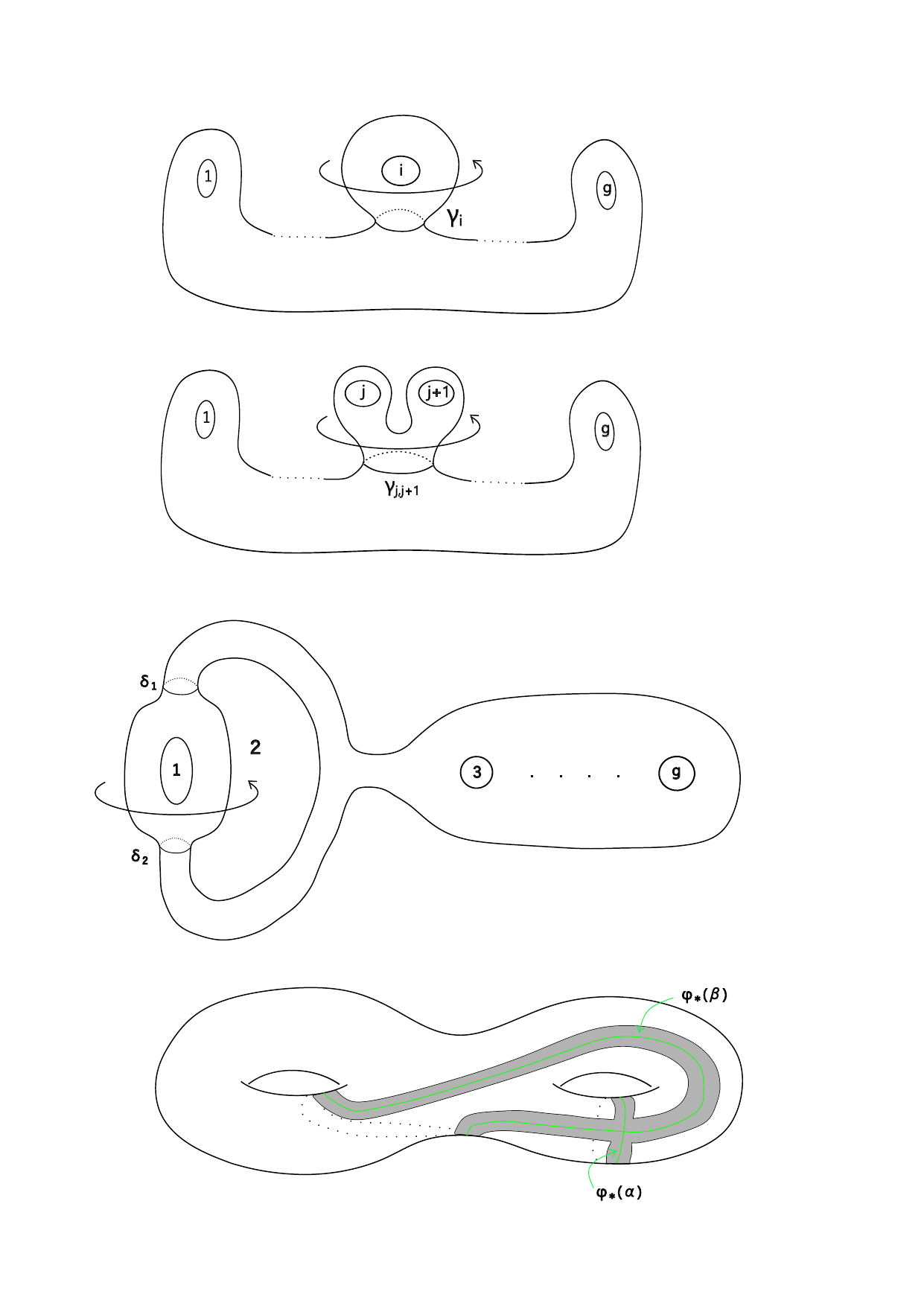}
\caption{$\gamma _{j, j+1}$ and the half twist $d_j$}
\label{fig:half twist 2}
\vspace{20pt}
\includegraphics[width=11cm, pagebox=cropbox,clip]{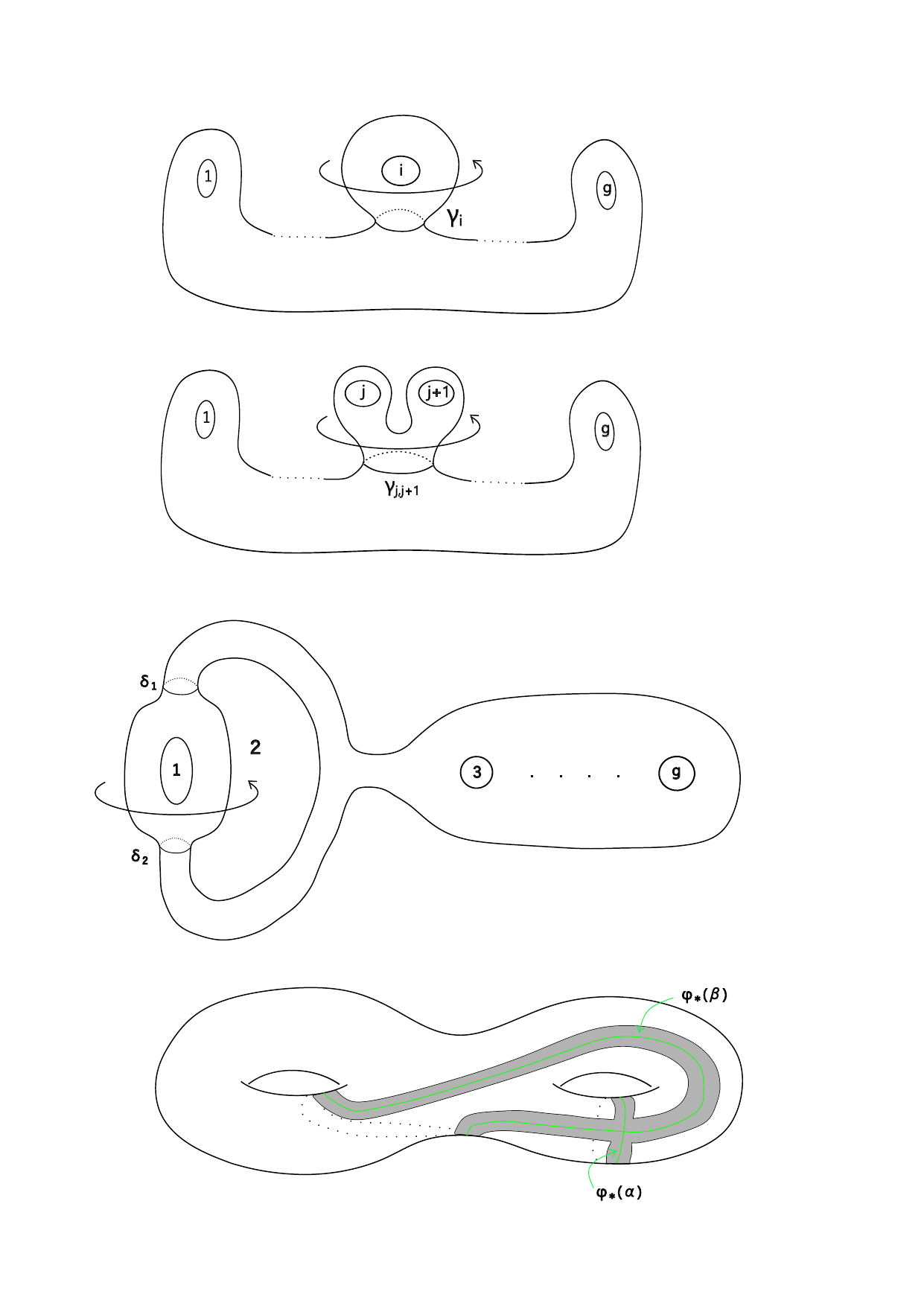}
\caption{$\delta _1$, $\delta _2$ and the half twist $t_1$}
\label{fig:half twist 3}
\end{center}
\end{figure}

The next proposition guarantees that 
the action of $\Aut (F_g)$ on $\pi _1(V_g)\cong F_g$ is recovered by that of $\mathcal{H}_g$. 

\vspace{8pt}

\begin{prop}[Griffiths \cite{Gr64}, Suzuki \cite{Su77}]~\label{F_g}
For any $\gamma \in \Aut (F_g)$, there exists a mapping class $h\in \mathcal{H}_g$ 
such that $h_{\ast }\colon \pi _1(V_g)\to \pi _1(V_g) $ coincides with $\gamma \colon F_g\to F_g$ 
under the identification $\pi _1(V_g)\cong F_g$. 
\end{prop}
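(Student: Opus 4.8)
The plan is to exhibit a generating set of $\Aut(F_g)$ each of whose members lies in the image of the natural map $\Phi\colon \mathcal{H}_g \to \Aut(F_g)$, $h\mapsto h_\ast$, where we identify $\pi_1(V_g)\cong F_g$. Since $\Phi$ is a group homomorphism, its image is a subgroup of $\Aut(F_g)$; hence it suffices to realize each element of some fixed generating set of $\Aut(F_g)$ by a mapping class in $\mathcal{H}_g$.

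First I would invoke Theorem~\ref{AFV}, which tells us that the automorphisms $\sigma_i$ $(1\leq i\leq g)$, $\tau_j$ $(1\leq j\leq g-1)$ and $\eta$ generate $\Aut(F_g)$. Next I would record the actions of the three half twists of Example~\ref{H_g} on $\pi_1(V_g)$:
\begin{align*}
\Phi(k_i) &= \sigma_i, \\
\Phi(d_j) &= \sigma_j\sigma_{j+1}\tau_j, \\
\Phi(t_1) &= \tau_1\sigma_2\eta\sigma_2\tau_1.
\end{align*}

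The remaining step is purely algebraic: solve these identities for the AFV generators. Each $\sigma_i$ is directly $\Phi(k_i)$. From the second identity, $\tau_j = \sigma_{j+1}^{-1}\sigma_j^{-1}\bigl(\sigma_j\sigma_{j+1}\tau_j\bigr)$, so $\tau_j$ lies in the subgroup generated by $\Phi(k_j)$, $\Phi(k_{j+1})$ and $\Phi(d_j)$, and is therefore in $\image(\Phi)$. Finally, now that $\sigma_2$ and $\tau_1$ are known to lie in $\image(\Phi)$, the third identity gives $\eta = \sigma_2^{-1}\tau_1^{-1}\bigl(\tau_1\sigma_2\eta\sigma_2\tau_1\bigr)\tau_1^{-1}\sigma_2^{-1}\in \image(\Phi)$. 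Hence $\image(\Phi)$ contains all the generators $\sigma_i$, $\tau_j$, $\eta$, so $\Phi$ is surjective, which is exactly the assertion.

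I expect no serious obstacle: the real content of the proposition has already been absorbed into Example~\ref{H_g} (the geometric verification that the listed half twists induce the stated automorphisms) and into the AFV generating set, and what remains is a short generation argument. The only point needing mild care is the composition convention defining $\Phi$; but since $\image(\Phi)$ is a subgroup whichever way the convention is fixed, the extractions of $\tau_j$ and $\eta$ above are unaffected.
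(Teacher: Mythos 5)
Your proof is correct and is essentially the paper's own argument: the paper likewise realizes the AFV generators via the half twists of Example~\ref{H_g}, writing the explicit words $h_{\sigma_i}=k_i$, $h_{\tau_j}=k_jk_{j+1}d_j$, $h_{\eta}=k_2d_1t_1d_1k_2$, which is precisely your subgroup-extraction made explicit (using that the $\sigma_i$ are involutions). Your remark that the image is a subgroup regardless of the composition convention is a sensible extra precaution, but the route is the same.
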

\begin{proof}
When we regard the generators $\sigma _i, \tau _j, \eta $ of $\Aut(F_g)$ given in Theorem~\ref{AFV} as actions on $\pi _1(V_g)$, the elements $h_{\sigma _i}, h_{\tau _j}, h_{\eta }\in \mathcal{H}_g$ that realize them are given by 
\[h_{\sigma _i}=k_i, \;\; h_{\tau _j}=k_jk_{j+1}d_j, \;\;  h_{\eta }=k_1d_1t_1d_1k_1, \] respectively. 
\end{proof}

\subsection{G-monodromy and charts}
\begin{definition}
A group homomorphism from a surface group $\pi _1(\Sigma _g)$ to a group $G$ is called a {\it $G$-monodromy representation}. 
\end{definition}
From now on, let $G$ be a group with a finite presentation $G=\left< \mathcal{X} \mid \mathcal{R} \right>$. 
Moreover, let $\Gamma $ be an oriented graph on $\Sigma _g$ such that each edge is labelled by an element of $\mathcal{X}$. 

\begin{definition}[Intersection word]
Let $\eta \colon [0,1]\to \Sigma_g$ be a path transverse to the edges of $\Gamma$. 
The path $\eta $ transversely intersects with $\Gamma $ at a finite number of points, 
say $b_1, b_2, \ldots , b_n$, in this order when the parameter goes from $0$ to $1$. 
Let $x_i$ denote the label of the edge containing $b_i$. 
When the edge intersects with $\eta $ from left to right (resp. right to left), 
we determine the signature $\ve _i$ by $\ve _i=+1$ (resp. $\ve _i=-1$). 
Then we define the word $w_{\Gamma }(\eta )$ consisting of letters in $\mathcal{X}\cup \mathcal{X}^{-1}$ 
by $w_{\Gamma }(\eta )= x_1^{\ve_1}x_2^{\ve_2}\cdots x_n^{\ve_n}$, 
which is called the intersection word of $\eta $ with respect to $\Gamma$. 
\end{definition}

For example, in Figure~\ref{fig:交叉語} below, 
the path $l$ embedded in $\Sigma _g$ transversely intersects with three edges of $\Gamma $ 
labeled as $b$, $b$ and  $a$, respectively. 
With respect to the orientation of path $l$, the first edge intersects with $l$ from left to right, 
the second one from left to right, and the third one from right to left. 
Thus the intersection word of $l$ with respect to $\Gamma $ is given by $w_{\Gamma}(l)=b^2 a^{-1}$.

\begin{figure}[h]
 \centering
 \includegraphics[width=6cm, pagebox=cropbox,clip]{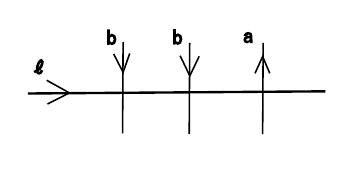}
 \caption{Intersection word of $l$}
 \label{fig:交叉語}    
\end{figure}

\begin{definition}[Charts]
A finite oriented graph $\Gamma $ on $\Sigma _g$ is called a {\it chart} 
with respect to the finitely presented group $G=\left< \mathcal{X} \mid \mathcal{R} \right>$ 
if it satisfies the following conditions: 
\begin{enumerate}
\item
Each edge is labeled by an element of $\mathcal{X}$, i.e., a generator. 
\item
For each vertex $v$, the intersection word of a small simple closed curve $m_v$ rotating around $v$ counterclockwise is a cyclic permutation of an element of $\mathcal{R}\cup \mathcal{R}^{-1}$, i.e., a relator. 
\end{enumerate}
However, we  allow a loop without vertices (given an orientation and labeled by an element of $\mathcal{X}$) as an edge of $\Gamma $. Such an edge is called a {\it hoop}. 
\end{definition}
\begin{remark}
An edge of graph is usually defined as a connection between two vertices, 
so a chart with hoops is not a graph in the normal sense. 
\end{remark}

The graph depicted in Figure~\ref{fig:Gamma} is an example of a chart on the 2-torus $T^2$ 
with respect to the group $G=\langle a,b\mid a^2,b^3\rangle$, 
where the square shown in the figure is regarded as $T^2$ by identifying its opposite edges. 
Taking a vertex $v$ as in the figure, 
then the intersection word $w_{\Gamma}(m_v)$ of a small loop $m_v$ around it is $b^{-3}$, 
which is indeed (a cyclic permutation of) an element of $\mathcal{R}\cup \mathcal{R}^{-1}$. 
\begin{figure}[h]
    \centering
    \includegraphics[width=7cm, pagebox=cropbox,clip]{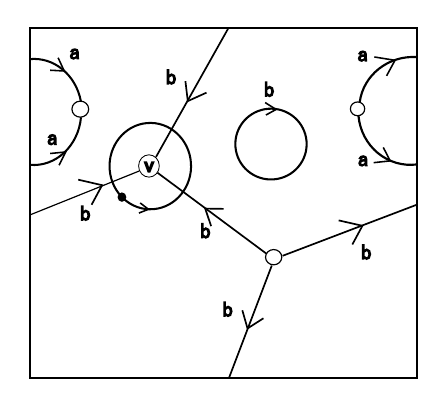}
    \caption{Example of $\Gamma$ and $w_{\Gamma}(m_{v })$}
    \label{fig:Gamma}
\end{figure}

For a given chart $\Gamma $ on $\Sigma _g$, we can define a $G$-monodromy representation 
$$\rho_{\Gamma }\colon \pi _1(\Sigma _g, b_0)\to G $$
by corresponding a loop $l$ with $l(0)=l(1)=b_0$ to its intersection word $w_{\Gamma }(l)$. 
Here we need to slightly perturb the loop $l$ so that it becomes transverse to $\Gamma $, if necessary. 
Now we explain why $\rho_{\Gamma }$ is well-defined. 
Let $l_0$ and $l_1$ be two loops such that $l_0\simeq l_1$, and $l_t$ a homotopy connecting them. 
If $l_t$ is transverse to $\Gamma $ for each $t\in [0, 1]$, then we have $w_{\Gamma}(l_0)=w_{\Gamma}(l_1)$,  since $l_t$ intersects with $\Gamma $ in topologically the same way for all $t\in [0,1]$. 
The way of intersection essentially changes only when the homotopy $l_t$ passes through a vertex $v$. 
In this case, the difference between $w_{\Gamma}(l_0)$ and $w_{\Gamma}(l_1)$ is just $w_{\Gamma}(m_v)$, 
which is a relator of $G=\langle \mathcal{X} \mid \mathcal{R} \rangle $. 
Therefore, $w_{\Gamma}(l_0)$ equals to $w_{\Gamma}(l_1)$ as an element of $G$, and thus, $\rho _{\Gamma}$ is well-defined. (It is clear that $\rho _{\Gamma}$ is homogeneous.)

\begin{example}
Let us see some examples of a chart $\Gamma $ on $T^2$ 
and the corresponding homomorphism $\rho _{\Gamma }$. 
\begin{enumerate}
\item
First we consider the case where $G=F_2=\left< a, b \right>$. 
Since there is no relation between the two generators of the free group $F_2$, 
every $F_2$-chart has no vertex, but only hoops. 
The chart on $T^2$ shown in Figure~\ref{fig:F_2} is such an example. 
The intersection words of the loops $\alpha $ and $\beta $ depicted in the figure are given by 
$$\rho_{\Gamma}(\alpha)=e, \;\; \rho_{\Gamma}(\beta)=b^{-1}a, $$ 
which determine the homomorphism $\rho _{\Gamma}\colon \pi _1(T^2)\to F_2$. 
\begin{figure}
    \centering
    \includegraphics[width=7cm, pagebox=cropbox,clip]{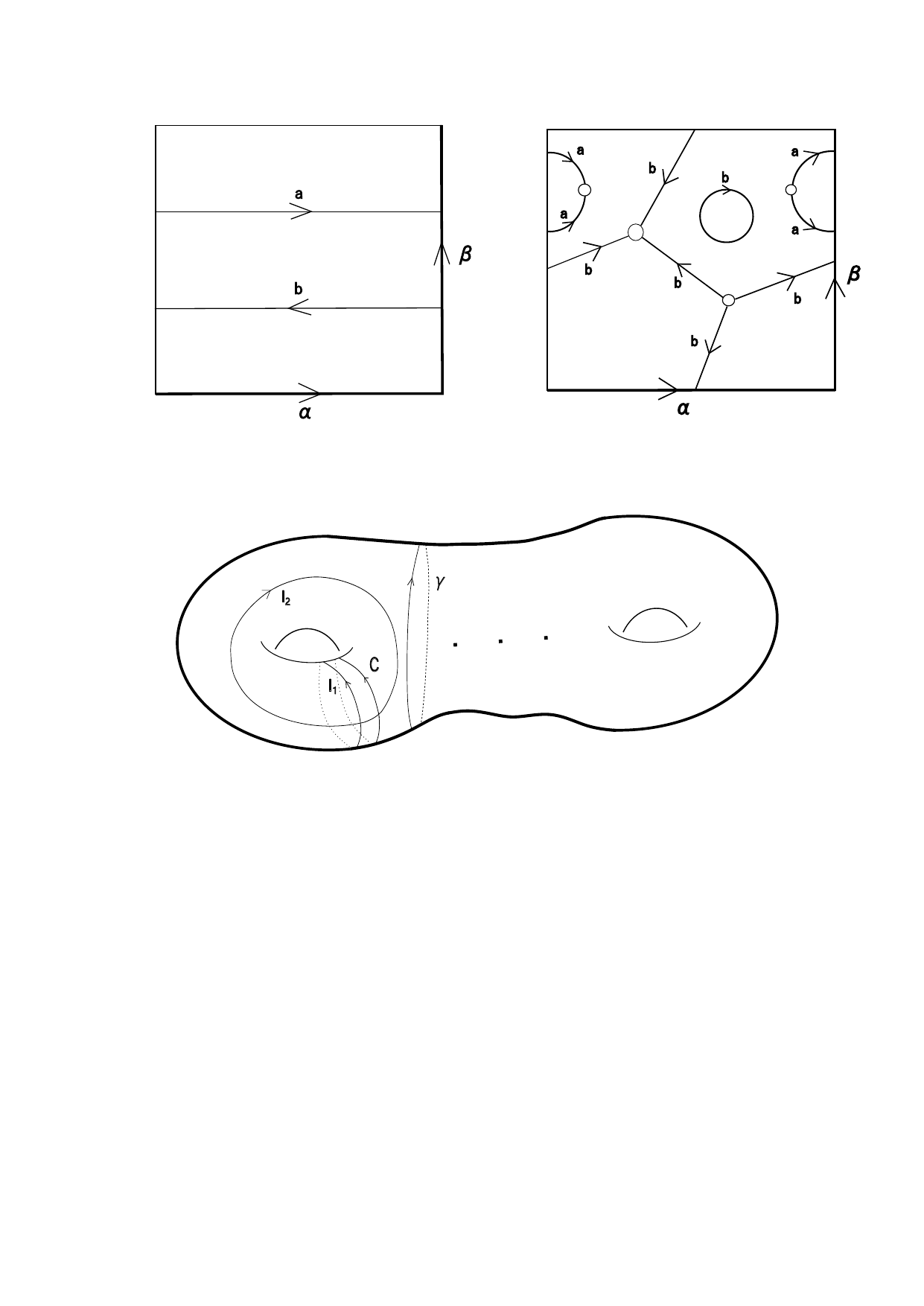}
    \caption{Example of a $G$-chart on $T^2$ (when $G=F_2=\langle a,b\rangle$)}
    \label{fig:F_2}
\end{figure}
\item
Figure~\ref{fig:enter-label} describes the same $G$-chart as that shown in Figure~\ref{fig:Gamma}, 
where $G=\Z_2\ast \Z_3=\langle a,b\mid a^2,b^3\rangle$. 
The intersection words of the loops $\alpha $ and $\beta $ are 
$$\rho_{\Gamma}(\alpha )=b, \;\; \rho_{\Gamma}(\beta )=ba^2=b, $$ respectively. 
From these, the homomorphism $\rho_{\Gamma}\colon \pi _1(T^2)\to \Z_2\ast \Z_3$ is determined. 
\begin{figure}
    \centering
    \includegraphics[width=7cm, pagebox=cropbox,clip]{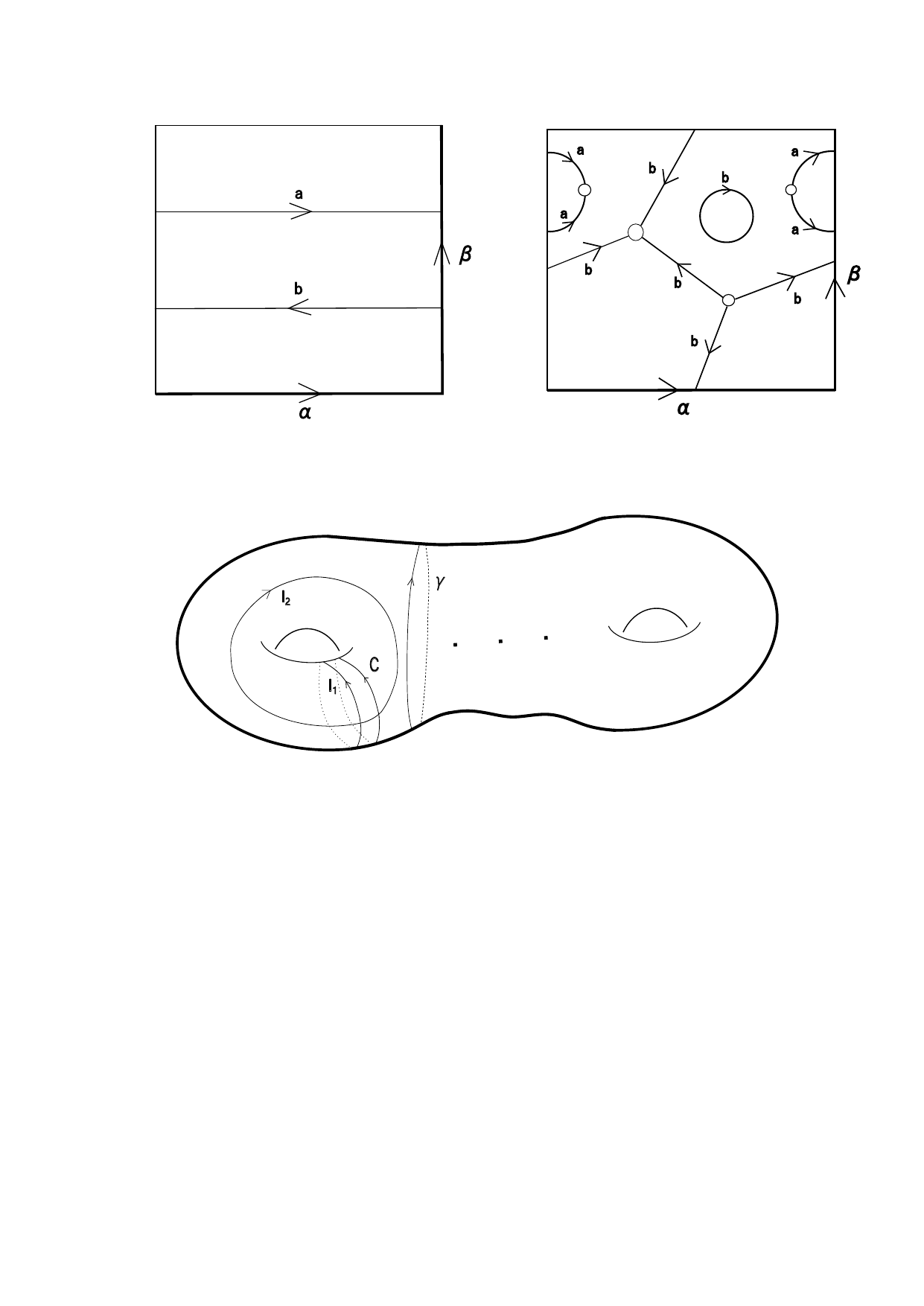}
    \caption{Example of a $G$-chart on $T^2$ ($G = \langle a,b\mid a^2,b^3\rangle$)}
    \label{fig:enter-label}
\end{figure}
\end{enumerate}
\end{example}

Thus, for a given chart $\Gamma $, the corresponding $G$-monodromy representation $\rho _{\Gamma }$ is determined. In fact, the converse is also true. 

\vspace{8pt}

\begin{theorem}[Kamada \cite{Ka07}, Hasegawa \cite{Ha06}, see also \cite{EKHT15}]~\label{chart}
For any $G$-monodromy representation $\rho $, 
there exists a $G$-chart $\Gamma $ such that $\rho _{\Gamma }=\rho $. 
\end{theorem}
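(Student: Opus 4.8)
The plan is to realize the abstract homomorphism $\rho$ by a map into the presentation complex of $G$ and then read off the chart as a transverse preimage. Let $X_G$ be the presentation $2$-complex of $G=\langle \mathcal{X}\mid \mathcal{R}\rangle$: it has a single $0$-cell $\ast$, one oriented $1$-cell $e_x$ for each generator $x\in\mathcal{X}$, and one $2$-cell $c_r$ attached along the word $r$ for each relator $r\in\mathcal{R}$, so that $\pi _1(X_G,\ast)\cong G$ by van Kampen's theorem. I would first realize $\rho$ by a continuous map, using the standard CW structure on $\Sigma _g$ with one $0$-cell, the $2g$ oriented $1$-cells $\alpha _1,\beta _1,\dots,\alpha _g,\beta _g$, and a single $2$-cell attached along $[\alpha _1,\beta _1]\cdots[\alpha _g,\beta _g]$. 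Define $f$ on the $0$-skeleton by sending the vertex to $\ast$ (so $f(b_0)=\ast$), and on the $1$-skeleton by sending each $1$-cell to a loop in $X_G^{(1)}$ representing the word $\rho(\alpha _i)$ or $\rho(\beta _i)$. The attaching map of the $2$-cell then maps to $\prod_{i=1}^{g}[\rho(\alpha _i),\rho(\beta _i)]=\rho\!\left(\prod_{i=1}^{g}[\alpha _i,\beta _i]\right)=e$ in $\pi _1(X_G)$, hence is null-homotopic, so $f$ extends over the $2$-cell to $f\colon\Sigma _g\to X_G$ with $f_\ast=\rho$. This is exactly where the surface relation is used: the extension obstruction vanishes precisely because $\rho$ is a homomorphism.

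Next I would extract the chart by transversality. Choose a midpoint $m_x$ on each $1$-cell $e_x$, with the transverse coorientation induced by the orientation of $e_x$, and a center $q_r$ in each $2$-cell $c_r$. After a homotopy I may assume $f$ is smooth and transverse to every $m_x$ and every $q_r$, with $f(b_0)=\ast$ lying off all the $m_x$. Set $\Gamma:=f^{-1}\!\left(\bigcup_{x}m_x\right)$. Away from $f^{-1}\!\left(\bigcup_{r}q_r\right)$ this is a $1$-manifold in $\Sigma _g$; I label each component by the generator $x$ of the midpoint it covers and orient it by pulling back the coorientation of $m_x$, while the points of $f^{-1}\!\left(\bigcup_{r}q_r\right)$ become the vertices of $\Gamma$.

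Finally I would verify the two chart axioms and the identity $\rho _\Gamma=\rho$. Condition (1) holds by construction. For condition (2), near a vertex $v$ with $f(v)=q_r$ transversality lets me arrange that $f$ on a small disk about $v$ is modeled on the characteristic map of $c_r$, so a small counterclockwise loop $m_v$ maps to the attaching circle of $c_r$ and its intersection word $w_\Gamma(m_v)$ is a cyclic permutation of an element of $\mathcal{R}\cup\mathcal{R}^{-1}$. For the representation, given a loop $l$ based at $b_0$ and transverse to $\Gamma$, each signed crossing of $l$ with an $x$-labeled edge corresponds to $f\circ l$ crossing $m_x$ in the cooriented sense, i.e.\ to traversing $e_x^{\pm1}$; reading these crossings in order yields exactly the element $f_\ast[l]=\rho([l])$ of $G$. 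Hence $w_\Gamma(l)=\rho([l])$ for every $l$, so $\rho _\Gamma=\rho$.

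I expect the main obstacle to be the transversality bookkeeping of the second and third paragraphs: producing a single homotopy of $f$ that is simultaneously transverse to all midpoints and all $2$-cell centers, and—most delicately—pinning down the local model near each preimage of a center $q_r$ so that the loop $m_v$ genuinely reads off the full relator $r$ rather than some unintended word. Everything else reduces to matching the combinatorics of intersection words with the cellular description of $f_\ast$.
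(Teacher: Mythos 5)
Your argument is correct in substance but takes a genuinely different route from the paper's. The paper works combinatorially inside the surface: it places parallel labelled arcs near the $1$-skeleton realizing chosen words for $\rho (\alpha _i)$ and $\rho (\beta _i)$, observes that the intersection word of $\partial e^2$ represents $e$ in $G$, and then geometrically realizes a word reduction --- deletions/insertions of $x^{\varepsilon }x^{-\varepsilon }$ and insertions of relators $r^{\pm 1}$ --- as arcs and vertices extending into the $2$-cell, leaving an empty disk. You instead realize $\rho $ by a map $f\colon \Sigma _g\to X_G$ to the presentation complex (your first paragraph is exactly parallel to the paper's use of the surface relation) and then pull the chart back as a dual transverse structure; this is the standard correspondence between maps to $X_G$ and pictures (dual van Kampen diagrams). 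Your route buys conceptual clarity --- for instance it makes the uniqueness-up-to-chart-moves statement of Remark~\ref{rem:noncontractible} plausible, since homotopies of $f$ translate into chart moves --- while the paper's route is elementary and self-contained, requiring no transversality theory for a non-manifold target. The two proofs need different inputs: the paper needs the algebraic fact that a word representing $e$ reduces to the empty word by trivial insertions and relator insertions; you need a transversality/normal-form theorem for maps into $2$-complexes.

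One technical point needs repair, and it is exactly the one you flag. Since $X_G$ is not a manifold, ``smooth and transverse to every $m_x$ and every $q_r$'' is not literally meaningful (a neighborhood of $m_x$ is a book with pages, one per occurrence of $x^{\pm 1}$ in the relators), and plain transversality at $q_r$ alone would make $f$ a local diffeomorphism onto a small disk \emph{inside the open $2$-cell}: a genuinely small loop $m_v$ around $v\in f^{-1}(q_r)$ would then map into the open $2$-cell, meet no midpoints, and read the empty word rather than $r$ --- so $v$ would be an isolated vertex violating axiom (2). The standard fix is two-step: first make $f$ transverse to the centers $q_r$ (meaningful, as the open $2$-cells are manifolds) and remove small disks $D_v$ around the preimages; then use that $X_G\setminus \bigcup _r\{q_r\}$ deformation retracts onto the $1$-skeleton to homotope $f$, rel the circles $\partial D_v$ suitably, into the graph $X_G^{(1)}$, where ordinary graph transversality at the midpoints makes $f^{-1}\bigl(\bigcup _x m_x\bigr)$ a properly embedded $1$-manifold whose endpoints on each $\partial D_v$ spell the attaching word of $c_r$ (with $r^{-1}$ at orientation-reversing preimages, which is why $\mathcal{R}\cup \mathcal{R}^{-1}$ appears); finally cone these endpoints to $v$ inside $D_v$ to form the vertices of $\Gamma $, closed components without endpoints becoming hoops. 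With this normal form in place of your ``modeled on the characteristic map'' shortcut, your verification of axiom (2) and of $w_{\Gamma }(l)=f_{\ast }[l]=\rho ([l])$ goes through verbatim.
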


\begin{proof}[{\bf Outline of the proof}]
For a given  $\rho $, we construct $\Gamma $ as follows. 
Let $b_0$ be the base point of $\Sigma_g$. 
We take a generating system $\alpha _1, \beta _1, \ldots , \alpha _g, \beta _g$ of the fundamental group 
$\pi _1(\Sigma _g, b_0)$ as depicted in Figure~\ref{fig:generators}, 
and the standard cell decomposition of $\Sigma_g$ corresponding to it. 
Namely, the cell decomposition has $b_0$ as a unique $0$-cell $e^0$, 
$1$-cells $e^1_1, e^1_2, \ldots , e^1_{2g-1}, e^1_{2g}$ corresponding to 
$\alpha _1, \beta _1, \ldots , \alpha _g, \beta _g$, respectively, 
and a unique $2$-cell $e^2$ attached along the loop $[\alpha _1, \beta _1]\cdots [\alpha _g, \beta _g]$. 
For each $i$ with $1\leq i\leq g$, let $w(\alpha _i)$ and $w(\beta _i)$ be the words of letters in 
$\mathcal{X}\cup \mathcal{X}^{-1}$ corresponding to $\rho (\alpha _i)$ and $\rho (\beta _i)$, respectively.  
Then we arrange parallel oriented arcs labelled by elements of $\mathcal{X}\cup \mathcal{X}^{-1}$ on a neighborhood of each $1$-cell so that they are transverse to the $1$-cell and the intersection word of $e^1_{2i-1}$ (resp. $e^1_{2i}$) coincides with $w (\alpha _i)$ (resp. $w (\beta _i)$). 
Since $\rho $ is a homomorphism, 
the intersection word of the loop $\partial e^2$ with respect to these arcs represents 
\[ [\rho (\alpha _1), \rho (\beta _1)] \cdots [\rho (\alpha _g), \rho (\beta _g)] 
=\rho \big([\alpha _1, \beta _1]\cdots [\alpha _g, \beta _g]\big)=\rho (e)=e \] in $G$. 
Hence, this word can be transformed into an empty word by a finite iteration of the following operations;  
\begin{enumerate}
\item 
deletion or insertion of trivial relation $x^{\ve}x^{-\ve}\; (x\in{\mathcal{X}},\ve \in{\{\pm 1\}})$, 
\item 
insertion of a relation $r^{\ve}\; (r\in{\mathcal{R},\ve \in{\{\pm 1\}}})$. 
\end{enumerate}
The operation (1) corresponds either connecting two adjacent oriented arcs labelled by the same letter $x$ or inserting a new oriented arc labelled by $x$. 
The operation (2) corresponds to the insertion of a vertex representing the relation $r^{\ve}$. 
Therefore, we can extend the arcs arranged on a neighborhood of the $1$-skeleton according to the algebraic operations that transform the intersection word of $\partial e^2$ into an empty word. 
Then there exists a $2$-disk $D$ inside the $2$-cell $e^2$ whose boundary does not intersect with extended arcs. This implies that we have already obtained a graph on $\Sigma _g\setminus D$. 
Now gluing the $2$-disk $D$ without any vertex or edge, 
then we obtain a chart $\Gamma $ on $\Sigma _g$. 
It is clear by construction that $\rho_{\Gamma }=\rho$. 
\end{proof}

\vspace{5pt}

\begin{remark}~\label{rem:noncontractible}
In fact, operations of a chart that do not change its monodromy, 
which are called chart moves of type W, 
have been completely classified in \cite{Ka07}. 
Moreover, it has been proven in the same paper that a chart $\Gamma $ satisfying $\rho_{\Gamma }=\rho $ uniquely exists up to those operations. 
In particular, deletion of a contractible loop from a chart $\Gamma $ does not change the monodromy. 
In what follows, we always assume that a chart $\Gamma $ does not contain contractible loops. 
\end{remark}

\begin{remark}
Changing the base point $b_0$ affects the $G$-monodromy by conjugate action of $G$. 
When $G=\Diff (F)$ and $\rho \colon \pi _1(\Sigma _g)\to \Diff (F)$ is the monodromy of an $F$-bundle, 
such an ambiguity can be recovered by changing the identification between the fiber over the base point and $F$. Hence, in the following, we always omit the base point $b_0$. 
\end{remark}

When $G=PSL(2;\Z)\cong \Z_2\ast \Z_3=\langle a,b\mid a^2,b^3\rangle$, 
any vertex of a $G$-chart $\Gamma $ is either degree-$2$ or degree-$3$ 
according as it represents $a^2$ or $b^3$. 
Since a degree-$2$ vertex and a degree-$3$ vertex are never connected by an edge, 
the connected component of a degree-$2$ vertex always forms a simple closed curve on $\Sigma _g$. 
This fact plays an important role in the proof of Theorem~\ref{psl}.

\section{Classification of $\mathrm{Hom}(\pi _1(\Sigma _g), PSL(2; \Z))$}
In this section, we first prove Theorem~\ref{psl} by an ingenious usage of charts. 
Then we prove Theorem~\ref{hom-psl} and its generalization (Theorem~\ref{hom-free products}) 
by combining several known results in group theory and 
consideration about the mapping class group $\mathcal{H}_g$ of the handlebody $V_g$.

\begin{proof}[{\bf Proof of }$\mathrm{{\bf Theorem~\ref{psl}}}$]
We prove it by induction on $g$. 
For any homomorphism 
\[ \rho\colon \pi _1(\Sigma _g)\to PSL(2; \Z)\cong \Z_2\ast \Z_3, \] 
we take a chart $\Gamma $ of $\rho $ 
with respect to the finite presentation $PSL(2; \Z)=\left< a, b \mid a^2, b^3 \right>$. 
Let $X$ be the set of simple closed curves on $\Sigma _g$ 
consisiting of  the connected components of degree-$2$ vertices and hoops. 
\\
{\bf (A)} The case $g=1$. 
\begin{enumerate}
\item
If $X=\emptyset $, then $\rho $ can be treated as a homomorphism to the abelian group $\Z_3$, 
and thus, it can be described as \[\rho(\alpha)=b^i, \;\; \rho (\beta )=b^j \quad (i,j\in{\{0,1,2\}}),\]
where $b$ is the canonical generator of $\Z_3$. 
Hence, there exist coprime integers $p$ and $q$ such that $\rho (\alpha ^p\beta ^q)=e$. 
Then we obtain integers $r$ and $s$ such that $ps-qr=1$ by the Euclidean algorithm. 
Now taking the mapping class $f\in \mathcal{M}_1$ 
represented by $\begin{pmatrix}p&r\\q&s\end{pmatrix}\in SL(2;\Z)$, 
we have $$(\rho\circ f_{\ast })(\alpha )=\rho (f_{\ast }(\alpha ))=\rho (\alpha ^p\beta ^q)=e. $$
\item
If $X\ne \emptyset $, we take an element $c\in X$. 
Then $c$ is a simple closed curve on $T^2$, which is not contractible by Remark~\ref{rem:noncontractible}. 
Hence, there exist coprime integers $p$ and $q$ such that $c=\alpha ^p\beta ^q\in \pi _1(T^2)$. 
Since $c$ is a connected component of a finite graph $\Gamma $, there is a simple closed curve $c'$ 
parallel to $c$ such that $\Gamma \cap c'=\emptyset $. 
Then the monodromy along $c'$ is trivial, and we have $\rho (\alpha ^p\beta ^q)=\rho (c')=e$. 
Now by the same argument as in (1), we obtain $r, s\in \Z$ with $\begin{pmatrix}p&r\\q&s\end{pmatrix}\in SL(2;\Z)$ and the corresponding mapping class $f\in \mathcal{M}_1$ so that we have 
$$(\rho\circ f_{\ast })(\alpha )=\rho (f_{\ast }(\alpha ))=\rho (\alpha ^p\beta ^q)=e.$$
\end{enumerate}
{\bf (B)} 
The case $g\geq 2$. 
We are going to prove that the assertion holds for $\Sigma _g$ 
under the assumption that it is true for $\Sigma _{g-1}$. 
In order for that, it is enough to show 
the existence of a separating curve $\gamma $ with $\rho (\gamma )=e$ 
that separates $\Sigma _g$ into the connected sum of $\Sigma _{g-1}$ and $T^2$. 
We will discuss the following three cases: \\
(1) $X=\emptyset $, \\
(2) $X\ne \emptyset $ and $X$ contains a non-separating curve, \\
(3) $X\ne \emptyset $ and all the members of $X$ are separating curves. 

\begin{figure}
\begin{center}
\includegraphics[width=9cm, pagebox=cropbox,clip]{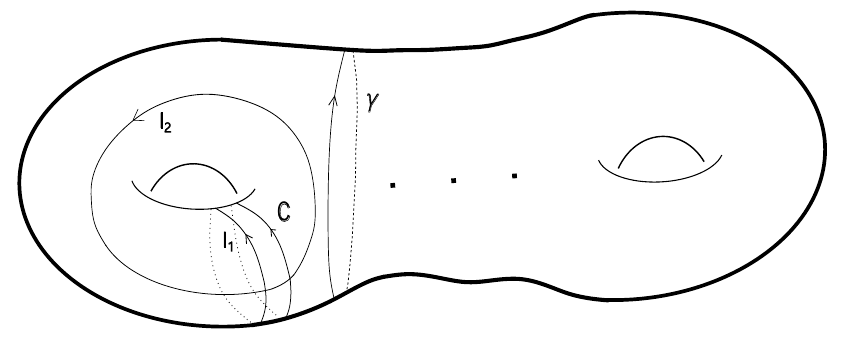}
\caption{How to take $l_1, l_2$ and $\gamma $ (when $c$ is non-separating)}
\label{fig:nonsep}
\includegraphics[width=9cm, pagebox=cropbox,clip]{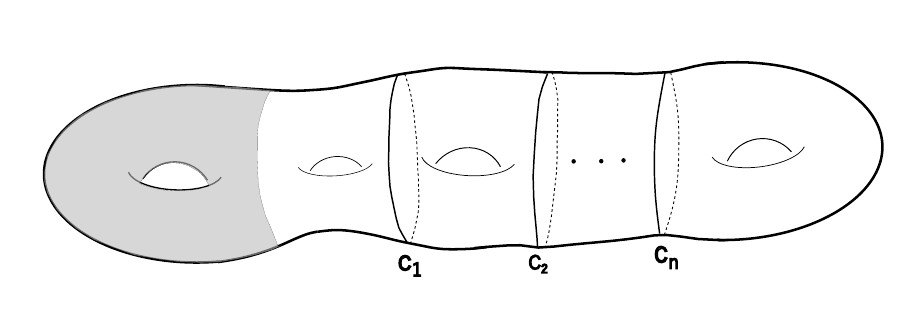}
\caption{When all the elements of $X$ are separating curves}
\label{fig:sep}
\end{center}
\end{figure}
\begin{enumerate}
\item
In this case, $\rho $ can be treated as a homomorphism into the abelian group $\Z_3$. 
On the other hand, for any separating curve $\gamma $ that separates $\Sigma _g$ into the connected sum of $\Sigma _{g-1}$ and $T^2$, there exist $l_1 , l_2 \in \pi _1(\Sigma _g)$ such that 
\[ \gamma =[l_1 , l_2 ]=l_1l_2 l_1 ^{-1}l_2^{-1}. \]
Then we have the following by the commutativity of $\Z_3$: 
\[ \rho(\gamma )=\rho(l_1 )\rho(l_2 )\rho(l_1)^{-1}\rho(l_2)^{-1}=e. \]
\item
Let $c\in X$ be a non-separating curve. 
Since $c$ can be mapped by an element of $\mathrm{Diff}_+(\Sigma_g)$ to the location 
as depicted in Figure~\ref{fig:nonsep}, we may assume that it is originally in that place. 
Since $c$ is a connected component of a finite graph $\Gamma $, 
we can take a simple closed curve $l_1$ parallel to $c$ and satisfying $\Gamma \cap l_1 =\emptyset $. 
Then it follows that $\rho(l_1)=e$. 
Moreover, taking $l_2$ and $\gamma $ as in Figure~\ref{fig:nonsep}, we have 
\[ \rho(\gamma )=\rho ([l_2, l_1^{-1}])=[\rho (l_2), \rho(l_1)^{-1}]=[\rho (l_2), e]=e. \]
\item
When all the members $c_1, \ldots, c_n$ of $X$ are separating curves, 
there are no degree-$2$ vertices nor hoops in the shaded region in Figure~\ref{fig:sep}. 
Then taking $\gamma $ as the boundary curve of that region, 
we have $\rho (\gamma )=e$ by the same argument as in (1). 
\end{enumerate}
Thus we obtain a desired separating curve $\gamma $ in any case. 
If we separates $\Sigma _g$ into $T^2$ and $\Sigma _{g-1}$ along this curve, 
the homomorphism $\rho $ can be decomposed into the two homomorphisms 
$\rho'\colon \pi _1(T^2)\to PSL(2; \Z)$ and  $\rho'' \colon \pi _1(\Sigma _{g-1})\to PSL(2; \Z)$. 
Therefore, the assertion of this theorem holds for $\Sigma _g$ if it does for $\Sigma _{g-1}$. 
\end{proof}

\begin{proof}[{\bf Proof of} $\mathrm{{\bf Theorem~\ref{hom-psl}}}$]
Let $\rho\colon \pi _1(\Sigma _g)\to PSL(2; \Z)$ be any homomorphism. 
By Theorem~\ref{psl}, there exists a diffeomorphism $\phi \in \Diff_+(\Sigma _g)$ 
such that $(\rho \circ \phi _{\ast })(\alpha _i)=e$ for each $i$. 
Then the homomorphism $\rho'=\rho\circ \phi _{\ast }$ factorizes so that the following diagram commutes; 
\begin{equation}
\xymatrix{
&\pi _1(V_g)\ar[dr]^-{\eta }\ar@{}[d]|{}&\\
\pi _1(\Sigma _g) \ar[ru]^-{\iota_{\ast }}\ar[rr]_-{\rho '}&&PSL(2;\Z)
}
\end{equation}
that is, there exists a homomorphism 
\[\eta \colon F_g\cong \pi _1(V_g)\to PSL(2; \Z)\cong \Z_2\ast \Z_3 \]
such that $\rho'=\eta \circ \iota_{\ast }$, where $\iota \colon \Sigma _g\to V_g$ is the inclusion map. 
Now applying Kurosh's subgroup theorem to the subgroup 
$\eta (F_g)=\rho '(\pi _1(\Sigma _g))\subset \Z_2\ast \Z_3, $
it turns out that $\eta (F_g)$ is isomorphic to the free product of some copies of $\Z_2$, $\Z_3$ and $\Z$. 
Namely, there exist subgroups $H_j\subset \eta (F_g)$ such that 
\[H_1\cong \cdots \cong H_k\cong \Z, \;\; 
H_{k+1}\cong \cdots \cong H_l\cong \Z_3, \;\;  
H_{l+1}\cong \cdots \cong H_m\cong \Z_2 \] 
and 
\[ \eta (F_g)=(H_1\ast \cdots \ast H_k)
\ast (H_{k+1}\ast \cdots \ast H_l)
\ast (H_{l+1} \ast \cdots \ast H_m).\]
Then we apply Grushko's theorem to the surjective homomorphism $\eta \colon F_g\to \eta (F_g)$ $(m-1)$ times, and obtain subgroups $G_j$ of $F_g$ $(1\leq j\leq m)$ such that 
\[ \eta (G_j)=H_j \;\; \text{and} \;\; G_1\ast \cdots \ast G_m=F_g. \] 
Since $G_j$ is a finitely generated free group and $H_j$ is isomorphic to either $\Z$, $\Z_3$ or $\Z_2$, 
we obtain $\gamma \in \Aut (F_g)$ such that 
\begin{eqnarray*}
(\eta \circ \gamma )(a _i)=
\begin{cases}
1\in H_i\cong \Z \;\;\; (1\leq i\leq k), \\
1\in H_i\cong \Z_3 \;\; (k+1\leq i\leq l), \\
1\in H_i\cong \Z_2 \;\; (l+1\leq i\leq m), \\
e \;\;\; (m+1\leq i\leq g) 
\end{cases}
\end{eqnarray*}
by applying Proposition~\ref{key} repeatedly. 
Moreover, there exists $h\in \mathrm{Diff}_+(V_g)$ satisfying $h_{\ast }=\gamma $ by Proposition~\ref{F_g}. 
Then we have 
\begin{eqnarray*}
(\eta \circ \gamma )\circ \iota_{\ast }
=\eta \circ h_{\ast }\circ \iota_{\ast } 
=\eta \circ (h \circ \iota)_{\ast }
=\eta \circ (\iota \circ h|_{\Sigma _g})_{\ast } \\
=\eta \circ \iota_{\ast }\circ (h|_{\Sigma_g})_{\ast }
=\rho'\circ (h|_{\Sigma_g})_{\ast } =\rho \circ (\phi \circ h|_{\Sigma _g})_{\ast }.
\end{eqnarray*}
Since $[h|_{\Sigma _g}]\in \mathcal{H}_g^{\ast }$, we have  
$(h|_{\Sigma _g})_{\ast }(\alpha _i)\in \left<\alpha _1, \ldots , \alpha _g\right>^{\pi_1(\Sigma _g)}$
by Proposition~\ref{chara}. Then it follows that 
\[ \left(\rho \circ (\phi \circ h|_{\Sigma _g})_{\ast }\right)(\alpha _i)\in 
\Big<(\rho\circ \phi_{\ast })(\alpha _1), \ldots , (\rho\circ \phi_{\ast })(\alpha _g)\Big>^{PSL(2;\Z)}= \{e\},  \]
that is, $\left(\rho \circ (\phi \circ h|_{\Sigma _g})_{\ast }\right)(\alpha _i)=e$. 
Now denote the mapping class $[\phi \circ h|_{\Sigma _g}]\in \mathcal{M}_g$ by $f$, 
then the assertion of the theorem follows. 
\end{proof}

Theorem~\ref{hom-psl} can be generalized into the case 
where the range of homomorphisms is the free product of finite cyclic groups 
$\Z_{k_1}\ast \Z_{k_2} \ast \cdots \ast \Z_{k_n}$. 
The following arguments (Theorem~\ref{hom-free products} and Corollary~\ref{lens}) 
are due to Masayuki Asaoka's advice. 

By Kurosh's subgroup theorem, any subgroup $H$ of $\Z_{k_1}\ast \Z_{k_2} \ast \cdots \ast \Z_{k_n}$ 
can be described as \[ H=H_1\ast H_2\ast \cdots \ast H_g. \]
Here each $H_j$ is a subgroup of $H$ with $H_i\cong \Z_{\chi (i)}$, where 
\[\chi \colon \{ 1, 2, \ldots , g \}\to 
\left(\bigcup _{j=1}^n\{\text{positive divisors of $k_j$} \}\right)\cup \{0 \}\] 
is a map and $\Z_0=\Z, \; \Z_1=\{e\}$. 

\vspace{8pt}

\begin{theorem}~\label{hom-free products}
Let $k_1, k_2, \ldots, k_n$ be integers greater than $1$. 
For any homomorphism \[ \rho\colon \pi _1(\Sigma _g)\to \Z_{k_1}\ast \Z_{k_2} \ast \cdots \ast \Z_{k_n},\]
the subgroup $\mathrm{Im}(\rho )\subset \Z_{k_1}\ast \Z_{k_2} \ast \cdots \ast \Z_{k_n}$ 
can be described as 
\[ \mathrm{Im}(\rho )=H_1\ast H_2\ast \cdots \ast H_g \]
by the same argument above. 
Then there exists a mapping class $f\in \mathcal{M}_g$ such that 
\begin{eqnarray*}
(\rho \circ f_{\ast })(\alpha _i)=e\;\; (1\leq i\leq g), \;\; 
(\rho \circ f_{\ast })(\beta _i)=
\begin{cases}
1\in H_i\cong \Z_{\chi (i)} \;\; (\chi (i)\ne 1), \\
e \;\;\;\;\;\;\; (\chi(i)=1). 
\end{cases}
\end{eqnarray*}
In particular, for any two homomorphisms 
$\rho_1, \rho_2 \colon \pi _1(\Sigma _g)\to \Z_{k_1}\ast \Z_{k_2} \ast \cdots \ast \Z_{k_n}$, 
the following two conditions are equivalent. 
\begin{enumerate}
\item
$\mathrm{Im}(\rho _1)=\mathrm{Im}(\rho _2)$. 
\item
There exists a mapping class $f\in \mathcal{M}_g$ such that $\rho_2=\rho_1\circ f_{\ast }$. 
\end{enumerate}
\end{theorem}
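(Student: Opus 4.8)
The plan is to mirror the two-stage structure of the proof of Theorem~\ref{hom-psl}. In \emph{Stage (A)} I establish the analogue of Theorem~\ref{psl} for the target $G=\Z_{k_1}\ast\cdots\ast\Z_{k_n}$: for every $\rho$ there is $f\in\mathcal{M}_g$ with $(\rho\circ f_{\ast})(\alpha_i)=e$ for all $i$. In \emph{Stage (B)} the meridians are then already killed, so $\rho\circ f_{\ast}$ factors through $\pi_1(V_g)\cong F_g$, and the normalization of the $\beta_i$ is carried out exactly as in Theorem~\ref{hom-psl}: Kurosh's theorem (Theorem~\ref{Kurosh}, applied inductively to the $n$ factors) writes $\Im(\rho)=H_1\ast\cdots\ast H_g$ with each $H_i$ cyclic, Grushko's theorem (Theorem~\ref{Stallings}) pulls this back to a free factorization of $F_g$, Proposition~\ref{key} produces $\gamma\in\Aut(F_g)$ sending each free generator to a generator of the matching $H_i$ (or to $e$), and Propositions~\ref{F_g} and~\ref{chara} realize $\gamma$ by a handlebody mapping class preserving the normal closure of the meridians. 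Thus the only genuinely new content lies in Stage (A).

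For Stage (A) I induct on $g$, following the proof of Theorem~\ref{psl}. In the base case $g=1$ the image $\rho(\pi_1(T^2))$ is abelian, hence cyclic (an abelian subgroup of a free product of cyclic groups is cyclic by Kurosh's theorem); writing it as $\langle c\rangle$ with $\rho(\alpha)=c^i$ and $\rho(\beta)=c^j$, I take a primitive vector $(p,q)\in\Z^2$ with $ip+jq=0$ (any primitive vector if $i=j=0$), so that $\rho(\alpha^p\beta^q)=c^{\,ip+jq}=e$, extend $(p,q)$ to a matrix in $SL(2;\Z)$, and let $f\in\mathcal{M}_1$ be the corresponding mapping class; then $(\rho\circ f_{\ast})(\alpha)=e$. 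This reproduces case (A) of Theorem~\ref{psl} with $\Z_3$ replaced by an arbitrary cyclic group.

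For $g\geq 2$ it suffices, as in Theorem~\ref{psl}, to find a separating curve $\gamma$ cutting off a one-holed torus with $\rho(\gamma)=e$. I take a chart $\Gamma$ for $\rho$ with respect to $G=\langle a_1,\ldots,a_n\mid a_1^{k_1},\ldots,a_n^{k_n}\rangle$. The structural fact replacing ``degree-$2$ components are simple closed curves'' is that, since distinct generators never meet at a vertex, every connected component $C$ of $\Gamma$ carries a single label $a_j$; hence any loop in a small regular neighborhood $N(C)$ crosses only $a_j$-edges and has monodromy in the abelian group $\langle a_j\rangle\cong\Z_{k_j}$, while every curve of $\partial N(C)$ is disjoint from $\Gamma$ and so has trivial monodromy. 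If some simple closed curve disjoint from $\Gamma$ is non-separating, I argue as in case (B)(2) of Theorem~\ref{psl}: a parallel trivial-monodromy curve $l_1$ together with a transverse curve $l_2$ bounds a handle with $\rho(\gamma)=\rho([l_2,l_1^{-1}])=[\rho(l_2),e]=e$. Otherwise every curve disjoint from $\Gamma$ separates, so the decomposition of $\Sigma_g$ into the pieces $\{N(C)\}$ and the $\Gamma$-free region $\Sigma_0$ uses only separating curves; its dual graph is then a tree, whence $g=\sum_{\text{pieces}}g_{\text{piece}}$ and, since $g\geq 2$, some piece $P$ has positive genus. A handle inside $P$ is bounded by a commutator $[c,c']$ whose factors have monodromy that is either both trivial (if $P=\Sigma_0$) or contained in a common cyclic group (if $P=N(C)$), so $\rho(\gamma)=e$ in either case. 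In all cases I obtain $\gamma$, split $\Sigma_g=\Sigma_{g-1}\#T^2$ along it, apply the inductive hypothesis to the $\Sigma_{g-1}$ summand and the base case to the $T^2$ summand, and recombine the resulting mapping classes.

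The main obstacle is exactly this inductive step. In the $PSL(2;\Z)$ case the order-$2$ generator forced the relevant components of $\Gamma$ to be simple closed curves, which is precisely the configuration used to locate the splitting curve; for $k_j\geq 3$ these single-label components are branched graphs instead. Replacing that input by the neighborhood-and-tree argument above---and in particular verifying that when every $\Gamma$-disjoint curve separates the dual graph must be a tree, forcing a positive-genus piece on which the monodromy is abelian---is the heart of the generalization. Once it is in place, Stages (A) and (B) fit together just as in the proof of Theorem~\ref{hom-psl}.
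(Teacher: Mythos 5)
Your proposal is correct, and your Stage (B) coincides with the paper's argument (Kurosh, Grushko, Proposition~\ref{key}, then Propositions~\ref{F_g} and~\ref{chara} to realize the automorphism by a handlebody mapping class). Stage (A) proves the same statement as the paper's Claim~\ref{3.2}, and likewise via a chart $\Gamma$ and the observation that each component carries a single label, but your inductive step is genuinely different. The paper thickens $\Gamma$ to a ribbon graph, lets $X$ be its boundary circles, and splits into three cases: if all members of $X$ are contractible it shows $\Gamma$ is connected, so the image lies in a single cyclic factor and the commutator trick applies; if $X$ contains a non-contractible \emph{separating} curve it cuts along it, and---since the two sides need not be $T^2$ and $\Sigma_{g-1}$---it explicitly strengthens the induction hypothesis to all closed orientable surfaces of genus less than $g$; the non-separating case is handled as in Theorem~\ref{psl}(B)(2). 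You instead keep the induction in the rigid ``split off a one-holed torus'' form of Theorem~\ref{psl}: in the all-separating case you cut along all boundary circles of the $N(C)$'s, note that a cutting curve is separating exactly when its dual edge is a bridge, so the dual graph is a tree, genus is additive over the pieces, and a positive-genus piece (either $\Gamma$-free or a single-label $N(C)$ with monodromy in one cyclic group) yields a torus-splitting curve $\gamma$ with $\rho(\gamma)=e$. This buys a uniform inductive step with no strengthened hypothesis and subsumes the paper's all-contractible case, at the cost of the dual-graph/tree and genus-additivity lemma, which you correctly flag as the heart of the matter and which is sound; your $g=1$ base case (abelian image is cyclic by Kurosh, then a primitive vector $(p,q)$ with $ip+jq=0$) also streamlines the paper's chart-based case split. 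The only point worth recording is that monodromies of loops inside a piece are defined only up to conjugation by the intersection word of a path to the base point, but both commutator factors are conjugated by the same word, so $\rho(\gamma)=e$ survives---the paper glosses this at the same level of rigor.
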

\begin{proof}[{\bf Outline of the proof}]
First we prove the following claim, which is a generalization of Theorem~\ref{psl}. 

\vspace{5pt}

\begin{claim}~\label{3.2}
For any homomorphism $\rho\colon \pi _1(\Sigma _g)\to \Z_{k_1}\ast \Z_{k_2} \ast \cdots \ast \Z_{k_n}$, there exists a mapping class $f\in \mathcal{M}_g$ such that $(\rho \circ f_{\ast })(\alpha _i)=e$ for each $i$ with $1\leq i\leq g$. 
\end{claim}
Since $\Z_{k_1}\ast \Z_{k_2} \ast \cdots \ast \Z_{k_n}
=\langle a_1, a_2, \ldots ,a_n \mid a_1^{k_1},  a_2^{k_1}, \cdots , a_n^{k_n}\rangle$, 
a degree-$k_i$ vertex and a degree-$k_j$ vertex of a chart $\Gamma $ of $\rho $, 
corresponding to the relators $a_i^{k_i}$ and $a_j^{k_j}$ respectively, 
are never connected by an edge if $i\ne j$. 
Thus, for any $i$ with $1\leq i\leq g$, each connected component of a degree-$k_i$ vertex is isolated. 
Now we take the ribbon graph obtained by thickening $\Gamma $ and denote the set of its boundary components by $X$. 
Then a parallel argument to the proof of Theorem~\ref{psl} works. 
However, we cannot exclude the case where $X$ contains contractible loops, 
since elements of $X$ are not connected components of $\Gamma $ this time. 
Thus we have to discuss the following three cases. 
\begin{enumerate}
\item
All the elements of $X$ are contractible. 
\item
$X$ contains a non-contractible separating curve. 
\item
$X$ contains a non-separating curve. 
\end{enumerate}

In the case (1), it is easily proven that $\Gamma $ is connected. 
Then $\rho $ can be seen as a homomorphism to a single abelian group $\Z_{k_i}$, 
and thus, the same argument as in the proof of Theorem~\ref{psl} works. 
In the case (2), we take a non-contractible separating curve $c\in X$. 
Then it follows that $\Gamma \cap c=\emptyset$, so we separates $\Sigma _g$ along $c$. 
Here we have to notice that $\Sigma _g$ is not necessarily decomposed into $T^2$ and $\Sigma _{g-1}$, 
and the assumption of induction should be modified in the form that 
the assertion holds for all the closed orientable surfaces of genus smaller than $g$. 
In the case (3), the same argument as in the proof of Theorem~\ref{psl} (B) (2) works. 
Thus Claim~\ref{3.2} is proven. 
A parallel argument to the proof of Theorem~\ref{hom-psl} works 
when we replace $\Z_2\ast \Z_3$ by $\Z_{k_1}\ast \Z_{k_2} \ast \cdots \ast \Z_{k_n}$, 
this completes the proof of Theorem~\ref{hom-free products}. 
\end{proof}

We obtain the following application as a corollary to this theorem. 

\vspace{8pt}

\begin{corollary}~\label{lens}
For any continuous map $\phi $ from a closed orientable surface $\Sigma _g$ 
to the connected sum of a finite number of lens spaces, 
there exists a diffeomorphism $f\in \Diff _+(\Sigma _g)$ such that 
$\phi \circ f$ can be continuously extended to the handlebody $V_g$. 
\end{corollary}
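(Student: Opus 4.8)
\textbf{Proof proposal for Corollary~\ref{lens}.}

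The plan is to reduce the geometric extension problem to the algebraic classification already obtained in Theorem~\ref{hom-free products}. The essential observation is that a connected sum of finitely many lens spaces $L_1 \# \cdots \# L_n$ has fundamental group isomorphic to the free product of the individual cyclic groups, so that $\pi_1(L_1 \# \cdots \# L_n) \cong \Z_{k_1} \ast \Z_{k_2} \ast \cdots \ast \Z_{k_n}$, where $L_j$ is a lens space with $\pi_1(L_j) \cong \Z_{k_j}$. First I would pass from the continuous map $\phi$ to the induced homomorphism $\phi_\ast \colon \pi_1(\Sigma_g) \to \Z_{k_1} \ast \cdots \ast \Z_{k_n}$ on fundamental groups (choosing base points suitably, noting that changing them only alters $\phi_\ast$ by an inner automorphism, which does not affect the argument).

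Next I would apply Claim~\ref{3.2} (the generalization of Theorem~\ref{psl} established inside the proof of Theorem~\ref{hom-free products}) to this homomorphism $\phi_\ast$. This yields a mapping class $f \in \mathcal{M}_g$, represented by a diffeomorphism which I also call $f \in \Diff_+(\Sigma_g)$, such that $(\phi_\ast \circ f_\ast)(\alpha_i) = e$ for each $i$ with $1 \leq i \leq g$. Writing $\psi = \phi \circ f$, this means exactly that the homomorphism $\psi_\ast = \phi_\ast \circ f_\ast$ kills every meridian generator $\alpha_i$. Consequently $\psi_\ast$ vanishes on the normal subgroup $\langle \alpha_1, \ldots, \alpha_g \rangle^{\pi_1(\Sigma_g)}$, which is precisely the kernel of the inclusion-induced map $\iota_\ast \colon \pi_1(\Sigma_g) \to \pi_1(V_g) \cong F_g$. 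Therefore $\psi_\ast$ factors through $\iota_\ast$, giving a homomorphism $\eta \colon \pi_1(V_g) \to \pi_1(L_1 \# \cdots \# L_n)$ with $\psi_\ast = \eta \circ \iota_\ast$, in direct analogy with the factorization diagram (1) used in the proof of Theorem~\ref{hom-psl}.

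Finally I would realize this algebraic factorization geometrically as a genuine continuous extension. Since the handlebody $V_g$ is homotopy equivalent to a wedge of $g$ circles, it is an Eilenberg--MacLane complex only in low degrees, but the target connected sum of lens spaces is an aspherical space in the relevant range away from its $\pi_2$; more directly, because $V_g$ deformation retracts onto its $1$-skeleton (a wedge of circles), continuous maps out of $V_g$ up to homotopy are determined by their effect on $\pi_1$, and every homomorphism from $\pi_1(V_g) \cong F_g$ to the target is realized by some continuous map $V_g \to L_1 \# \cdots \# L_n$. The step requiring care is checking that this extension over $V_g$ can be chosen to restrict on $\partial V_g = \Sigma_g$ to a map homotopic to $\psi = \phi \circ f$ itself, not merely inducing the same map on $\pi_1$. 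I expect this to be the main obstacle: one must verify that $\phi \circ f$ and the boundary restriction of the extension agree up to homotopy, which requires that the two maps from $\Sigma_g$ agree not only on $\pi_1$ but genuinely as homotopy classes. This is handled by observing that both the surface $\Sigma_g$ and the target have vanishing higher obstruction groups in the relevant dimensions once $\pi_2$ of the target is accounted for (the universal cover of a connected sum of lens spaces has the homotopy type dictated by the free product, with $\pi_2$ a free module that can be navigated by the standard obstruction-theoretic argument), so that a map from a surface into the target is determined up to homotopy by its induced $\pi_1$-homomorphism. Granting this, the extension over $V_g$ restricts to a map homotopic to $\phi \circ f$, and adjusting by that homotopy produces the desired continuous extension of $\phi \circ f$ to $V_g$.
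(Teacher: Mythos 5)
Your reduction coincides with the paper's: identify $\pi_1$ of the connected sum with $\Z_{k_1}\ast\cdots\ast\Z_{k_n}$, apply Claim~\ref{3.2} to get $f$ with $(\phi\circ f)_\ast(\alpha_i)=e$, and factor $(\phi\circ f)_\ast$ through $\iota_\ast\colon\pi_1(\Sigma_g)\to\pi_1(V_g)\cong F_g$. Where you diverge is the endgame. The paper does not build a map on $V_g$ realizing $\eta$ and then match boundary values; it extends $\phi\circ f$ inward directly: first over the $g$ compressing disks, which is possible exactly because the meridians now map to null-homotopic loops, and then over the remaining $3$-ball, where the only obstruction is the class in $\pi_2$ of the target represented by the resulting map on $S^2$; the paper disposes of this by citing $\pi_2\big(L(p_1,q_1)\#\cdots\#L(p_n,q_n)\big)=0$. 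Your detour obliges you to prove a homotopy-uniqueness statement for maps out of $\Sigma_g$ that the direct extension never needs.

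That uniqueness statement is precisely where your proposal has a genuine gap. The claim that a map from a closed surface into the target is determined up to homotopy by its induced $\pi_1$-homomorphism is false whenever $\pi_2$ of the target is nonzero: after fixing the maps on the $1$-skeleton, two such maps differ by a class in $H^2(\Sigma_g;\{\pi_2\})$ with local coefficients, and nothing in your ``navigation'' kills this class --- concretely, a constant map and a degree-one map of $\Sigma_g$ onto the connecting sphere induce the same (trivial) $\pi_1$-homomorphism but are not homotopic. For $n\geq 2$ the target really does have $\pi_2\neq 0$, as your own parenthetical concedes ($\pi_2$ is a nonzero free module over the group ring; e.g.\ the universal cover of $\R P^3\#\R P^3=L(2,1)\#L(2,1)$ is $S^2\times\R$, so $\pi_2\cong\Z$). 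You should also be aware that this tension is real rather than cosmetic: the paper's own proof asserts $\pi_2=0$, which holds for a single lens space ($n=1$) but fails for $n\geq 2$ by the sphere theorem, and in that range the difficulty cannot be navigated at all. Indeed, if $\phi$ collapses $\Sigma_g$ onto the connecting sphere of $L(2,1)\#L(2,1)$ with degree one, then for every $f\in\Diff_+(\Sigma_g)$ the map $\phi\circ f$ kills $\pi_1$; any extension $\Phi$ over $V_g$ would also kill $\pi_1$ (since $\iota_\ast$ is surjective onto $\pi_1(V_g)$), hence would lift to $S^2\times\R$ and yield a map $V_g\to S^2$ whose boundary restriction has degree $\pm 1$ --- impossible, since $[\Sigma_g]=0$ in $H_2(V_g)$. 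So your final step does not merely lack a proof: for $n\geq 2$ it is unfixable, and only the case of a single lens space, where $\pi_2=0$ and your uniqueness claim becomes true, goes through along either your route or the paper's.
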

\begin{proof}
The assertion follows from Theorem~\ref{hom-free products} (in fact, Claim~\ref{3.2} is enough) and 
the fact that the connected sum of lens spaces $L(p_1, q_1), L(p_2, q_2),  \cdots , L(p_n, q_n)$ satisfies 
\begin{eqnarray*}
\pi _1\Big( L(p_1, q_1)\# L(p_2, q_2)\# \cdots \#L(p_n, q_n) \Big) 
&\cong &\Z_{p_1}\ast \Z_{p_2}\ast \cdots \ast \Z_{p_n}, \\
\pi _2\Big( L(p_1, q_1)\# L(p_2, q_2)\# \cdots \#L(p_n, q_n) \Big) &=&0. 
\end{eqnarray*}
\end{proof}

\section{Orientable $T^2$-bundles}~\label{torus bdles}
In this section, we first review basic facts and known results about orientable $T^2$-bundles over $S^1$ in \S~4.1, and about orientable $T^2$-bundles over $\Sigma _g$ in \S4.2, 4.3, and 4.4. 
Moreover, in \S~4.5, we show a new result obtained as an application of Corollary~\ref{sl}. 

Let $B$ be a connected orientable $C^{\infty }$-manifold 
and $\pi \colon E\to B$ an orientable $T^2$-bundle over $B$. 
Then its structure group $\Diff_+(T^2)$ is known to be homotopy equivalent 
to $\mathrm{Aff}_+(T^2)=T^2\rtimes SL(2;\Z)$ (\cite{Ha65}). 
In particular, we have 
$$\pi_0 \left(\Diff_+(T^2)\right)\cong SL(2;\Z), \; \pi_1 \left(\Diff_+(T^2)\right)\cong \Z^2,$$
and thus, the mapping class group of $T^2$ is $\mathcal{M}_1\cong SL(2;\Z)$. 

Now we take a base point $b_0$ on the base space $B$ of the bundle $\pi $ 
and fix a diffeomorphism $$\Psi \colon T^2\to F_0:=\pi^{-1}(b_0).$$ 
Let $l\colon [0,1]\to B$ be a loop with $l(0)=l(1)=b_0$. 
Since the pullback of the bundle $\pi \colon E\to B$ by $l$ is a $T^2$-bundle over $[0,1]$, which is trivial, 
there exists a bundle map $\phi \colon [0,1]\times T^2\to E$ that covers $l\colon [0,1]\to B$ and 
satisfies $\phi_0=\Psi $, where we set $\phi (t, p)=\phi _t(p)$. 
Then the composition $\Psi^{-1}\circ\phi_1\colon T^2\to T^2$ is a diffeomorphism and its isotopy class $[\Psi^{-1}\circ\phi_1]\in \mathcal{M}_1$ depends only on 
the isotopy class of $\Psi $ and the homotopy class $[l]$ of $l$. 
The mapping class $[\Psi^{-1}\circ\phi_1]$ is called the monodromy along $[l]$ with respect to $\Psi$. 
The map $$\rho\colon \pi_1(B)\to  \mathcal{M}_1\cong SL(2;\Z)$$ 
defined by sending $[l]$ to $[\Psi^{-1}\circ\phi_1]$ is called 
the monodromy of $\pi \colon E\to B$ with respect to $\Psi $. 
Since we have chosen to regard the action of $\mathcal{M}_1$ on $T^2$ as a right action (see \S~\ref{MGC}), 
$\rho$ becomes a group homomorphism.

\vspace{5pt}

\begin{remark}~\label{rem:monodromy}
The mapping class group $\mathcal{M}(B)$ acts on $\pi _1(B)$ by $\alpha \cdot [f]=f_{\ast }(\alpha )$, 
and $SL(2;\Z)$ acts on itself by $Q\cdot A=QAQ^{-1}$. 
Notice that the monodromy representation $\rho \colon \pi_1(B)\to SL(2;\Z)$ is uniquely determined 
by the isomorphism class of $\pi $ up to these actions of $\mathcal{M}(B)$ and $SL(2;\Z)$. 
Here $[f]\in \mathcal{M}(B)$ and $Q\in SL(2;\Z)$ correspond to the base map of a bundle isomorphism and the ambiguity of the isotopy class of an identification $\Psi \colon T^2\to F_0$, respectively. 
\end{remark}

\begin{remark}
Since we have decided that the product in $SL(2;\mathbb{Z})$ is defined in the reverse order of composition and 
the action of $SL(2;\mathbb{Z})$ on $T^2$ is a right action, one would, strictly speaking, have to rewrite all the notation concerning products of matrices by taking transposes.
That is, conventions such as “vectors are regarded as row vectors on which matrices act from the right” and “the product $AB$ of matrices corresponds, under the usual definition, to $\T B\T A$” would have to be adopted.
However, this would be highly confusing. Moreover, since in the subsequent discussion there will be no point at which the order of multiplication in $SL(2;\mathbb{Z})$ causes any issue, we shall henceforth employ the standard notation for products of matrices. 
\end{remark}

\subsection{Orientable $T^2$-bundles over $S^1$}
Before dealing with $T^2$-bundles over surfaces, we first consider the case $B=S^1$. 
In this case, the total space of a $T^2$-bundle is given by a mapping torus.  
Here the mapping torus $M(A)$ of $A\in SL(2;\Z)$ is defined as follows; 
$$M(A)=\R\times T^2/ \big(t, [A\x ] \big) \sim \big(t+1, [ \x ] \big). $$
The map $\pi \colon M(A)\to S^1=\R/\Z$ defined by $\pi \big(\big[t, [\x]\big]\big)=[t]$ 
is indeed a $T^2$-bundle over $S^1$ with monodromy $A$. 
Conversely, any orientable $T^2$-bundle over $S^1$ can be described in this way, 
and its isomorphism class is determined by the conjugacy class of the monodromy $A\in SL(2;\Z)$ 
(see Remark~\ref{rem:monodromy}). 

Next we review on bundle automorphisms of $M(A)$. 
The identity map $$\mathrm{id}_{M(A)}\colon M(A)\to M(A)$$ 
is clearly a bundle isomorphism covering $\mathrm{id}_{S^1}$. 
On the other hand, for any $\u \in \Z^2$, 
the map $\widetilde{f}_{\u }\colon M(A)\to M(A)$ defined by 
$$\widetilde{f}_{\u }\big(\big[t, [\x] \big]\big)=\big[t, [\x+t\u ] \big]$$ 
is also a bundle isomorphism covering $\mathrm{id}_{S^1}$. 
This is nothing but the bundle isomorphism that sends the trivial section $s_0\colon S^1\to M(A)$ defined by $s_0([t])=\big [t, [{\bf 0}]\big]$ to another one $s_{\u} \colon S^1\to M(A)$ defined by $s_{\u}([t])= \big[t, [t\u] \big]$. 

\subsection{Monodromy and Euler class (the case $B=\Sigma _g$)}
In the following, we deal with the case where $B=\Sigma _g$. 
The meaning of the monodromy of an orientable $T^2$-bundle 
$\pi \colon M^4\to \Sigma _g$ is interpreted as follows. 
First we take the standard cellular decomposition 
$$\Sigma _g=\left(e^0\cup (e^1_1\cup e^1_2\cup \cdots \cup e^1_{2g-1}\cup e^1_{2g})\right)\cup e^2$$ 
used in the proof of Theorem~\ref{chart}. 
Then let $$\{ U^0, U^1_1, U^1_2, \cdots , U^1_{2g-1}, U^1_{2g}, U^2 \}$$ 
be its associating open covering of $\Sigma _g$ that satisfies the following conditions: 
\begin{enumerate}
\item
$U^0$ is an open neighborhood of $b_0$, 
and $U^1_k$ an open neighborhood of $e^1_k\setminus U^0$ for each $k$ with $1\leq k\leq 2g$. 
\item
$U^0$, $U^2$ and each $U^1_k$ are all diffeomorphic to an open $2$-disk. 
\item
Each $U^0\cap U^1_k$ is the disjoint union of two open $2$-disks, 
and $U^1_i\cap U^1_j=\emptyset \; (i\ne j)$. 
\item
$N:=U^0\cup (U^1_1\cup \cdots \cup U^1_{2g})$ is a regular neighborhood of the $1$-skeleton. 
\item
$N\cap U^2$ is a collar neighborhood of $\partial N$. 
\end{enumerate}

By the condition (2), the $T^2$-bundle $\pi $ is trivial over $U^0$, $U^2$ and each $U^1_k$. 
We take a local trivialization $\phi _0\colon \pi^{-1}(U^0)\cong U^0\times T^2$ 
so that $\phi_0 |_{F_0}=\Psi ^{-1}$. 
Then the transition function $g_{01}^k$ over $U^0\cap U^1_k$ is defined for each $k$. 
By the condition (3), we have open sets $V_k$ and $W_k$ diffeomorphic to an open $2$-disk such that 
$$U^0\cap U^1_k=V_k\sqcup W_k, $$
If we take the transition function $g_{01}^k\colon V_k\sqcup W_k\to \Diff_+(F)$ 
that coincides with the constant map to $\mathrm{id}_{T^2}$ over $V_k$, 
then ${{g}_{01}^k}|_{W_k}$ corresponds to the monodromy along $e^1_k$. 
Since a homotopy of transition functions does not change the bundle isomorphism class, 
we may retake ${{g}_{01}^k}|_{W_k}$ as a constant map valued in $\pi _0(\Diff_+(T^2))=SL(2;\Z)$. 
Thus we obtain an element in $SL(2;\Z)$ for each $e^1_k$ with $1\leq k\leq 2g$. 
Denote it by $A_i$ if $k=2i-1$, and by $B_i$ if $k=2i$. 
Then we have 
\[ \rho (\alpha _1)=A_1, \; \rho (\beta _1)=B_1, \; \cdots, \; \rho (\alpha _g)=A_g, \; \rho (\beta _g)=B_g \]
and $[A_1, B_1]\cdots [A_g, B_g]=E_2$. In other words, the restriction of $\pi $ over $e^1_k$ is isomorphic to $M(A_i)$ if $k=2i-1$ and to $M(B_i)$ if $k=2i$. 
Since these data determine the isomorphism class of the $T^2$-bundle $\pi |_{\pi^{-1}(N)}$ over $N$, the monodromy $\rho $ can be considered as the description of nontriviality of the bundle over the $1$-skeleton. 

What is left for us is to describe how to glue 
the trivial bundle $U^2\times T^2\to U^2$ and $\pi |_{\pi^{-1}(N)}\colon \pi^{-1}(N)\to N$. 
To do so, we can restore the isomorphism class of the $T^2$-bundle $\pi $. 
Since this information of the gluing is described by the transition function $N\cap U^2\to \Diff_+(T^2)$, 
we obtain the corresponding element in $\pi _1\left(\Diff_+(T^2)\right)\cong \Z^2$, say $(m, n)$. 
Thus the isomorphism class of an orientable $T^2$-bundle $\pi \colon M^4\to \Sigma _g$ 
is represented by $A_1, B_1, \ldots , A_g, B_g\in SL(2;\Z)$ with $[A_1, B_1]\cdots [A_g, B_g]=E_2$, 
and $(m, n)\in \Z^2$. 
Hence, we may denote this $T^2$-bundle by $M(A_1, B_1, \ldots , A_g, B_g; m, n)$. 
Now the following is obvious. 

\vspace{8pt}

\begin{prop}~\label{model} 
For any orientable $T^2$-bundle $\xi $ over $\Sigma _g$, 
there exist $A_1$, $B_1$, $\ldots ,$ $A_g$, $B_g\in SL(2;\Z)$ with $[A_1, B_1]\cdots [A_g, B_g]=E_2$ and $(m, n)\in \Z^2$ such that $\xi $ is isomorphic to the bundle 
$$\pi \colon M(A_1,B_1, \ldots, A_g, B_g; m,n)\to \Sigma _g. $$
\end{prop}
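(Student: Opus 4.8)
The plan is to recast the cell-by-cell construction preceding this proposition into a self-contained existence argument. First I would fix a base point $b_0\in\Sigma_g$ and an identification $\Psi\colon T^2\to F_0$, so that $\xi$ acquires a well-defined monodromy homomorphism $\rho\colon\pi_1(\Sigma_g)\to SL(2;\Z)$. Setting $A_i=\rho(\alpha_i)$ and $B_i=\rho(\beta_i)$ for $1\le i\le g$, the surface relation $[\alpha_1,\beta_1]\cdots[\alpha_g,\beta_g]=1$ in $\pi_1(\Sigma_g)$ forces $[A_1,B_1]\cdots[A_g,B_g]=E_2$, since $\rho$ is a homomorphism. This produces the candidate tuple $(A_1,B_1,\ldots,A_g,B_g)$ together with the required relation.

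Next I would compare $\xi$ with the model bundle over the regular neighborhood $N$ of the $1$-skeleton. Using the open cover $\{U^0,U^1_1,\ldots,U^1_{2g}\}$ and the local trivialization $\phi_0$ normalized by $\phi_0|_{F_0}=\Psi^{-1}$, each transition function over a component of $U^0\cap U^1_k$ can be homotoped to a constant map valued in $\pi_0(\Diff_+(T^2))\cong SL(2;\Z)$; over $e^1_{2i-1}$ (resp. $e^1_{2i}$) this constant is exactly $A_i$ (resp. $B_i$). As already observed, these data determine the isomorphism class of the bundle over $N$, so the restrictions of $\xi$ and of any model $M(A_1,\ldots,B_g;m,n)$ to $N$ are isomorphic, the parameter $(m,n)$ playing no role over $N$. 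Thus the $1$-skeleton data of $\xi$ is completely recorded by $(A_1,\ldots,B_g)$.

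Then I would analyze the attachment of the trivial bundle $U^2\times T^2$ to $\xi|_N$. Having fixed an isomorphism over $N$, this gluing is governed by a single transition function on the collar $N\cap U^2$, which deformation retracts onto a circle $\partial N\simeq S^1$; its homotopy class therefore lies in $\pi_1(\Diff_+(T^2))\cong\Z^2$, and I would call it $(m,n)$. By the very definition of $M(A_1,B_1,\ldots,A_g,B_g;m,n)$, the model bundle is assembled from the same $1$-skeleton data together with this clutching class, so the two bundles are isomorphic, which is precisely the assertion (only existence of such a tuple, not its uniqueness, is claimed here).

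The hard part is justifying that the attachment over the top cell contributes exactly one element of $\pi_1(\Diff_+(T^2))\cong\Z^2$ and nothing more: one must check that the clutching class is well-defined independently of the chosen trivialization over $N$, and that no further invariant hides in higher homotopy. Both points rest on the homotopy equivalence $\Diff_+(T^2)\simeq\mathrm{Aff}_+(T^2)$ recalled at the start of the section, which gives $\pi_i(\Diff_+(T^2))=0$ for $i\ge2$. Granting this, obstruction theory over the single $2$-cell leaves only the $\pi_1$-valued class, and the proposition follows as a direct bookkeeping of the construction.
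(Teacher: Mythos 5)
Your proposal is correct and takes essentially the same approach as the paper: Proposition~\ref{model} is stated there as an immediate consequence of the preceding discussion in \S 4.2, which is exactly the construction you spell out (reading off $A_i,B_i\in SL(2;\Z)$ from the monodromy over the $1$-skeleton, with the surface relation forcing $[A_1,B_1]\cdots[A_g,B_g]=E_2$, and extracting the clutching class $(m,n)\in\pi_1(\Diff_+(T^2))\cong\Z^2$ from the transition function on the collar $N\cap U^2$). Your added justifications --- that $\pi_i(\Diff_+(T^2))=0$ for $i\ge 2$ rules out further invariants, and that only existence, not well-definedness of $(m,n)$, is needed --- merely make explicit what the paper leaves implicit.
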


Next, we explain the definition of the Euler class of an orientable $T^2$-bundle 
$$\xi =\left(M(A_1,B_1, \ldots, A_g, B_g; m,n), \pi , \Sigma _g, T^2\right).$$ 
In order for that, we consider whether the bundle 
$$\pi \colon M(A_1, B_1, \ldots , A_g, B_g; m, n)\to \Sigma _g$$ admits a cross section. 
There exists a trivial cross section over the $1$-skeleton corresponding to $s_0$. 
It extends over the $2$-cell $e^2$ if and only if $(m, n)=(0,0)$, 
since it is necessary and sufficient for a continuous map $\partial D^2\to T^2$ to be extendable over $D^2$ that it is trivial as an element of $\pi _1(T^2)\cong \Z^2$. 
In this sense, it seems possible to consider $(m, n)\in \Z^2$ 
as the obstruction to the existence of a cross section of $\pi $, but this is inaccurate. 
For, even when $(m, n)\ne (0,0)$, if we retake another cross section over the $1$-skeleton, then it might be extendable over $e^2$. 
Such an inconvenience is resolved by the local system $\{ \pi _1(T^2) \}$, that is, 
the locally constant sheaf associated with the monodromy $\rho $ whose stalk over each point $b\in \Sigma _g$ is isomorphic to $\pi _1(F_b)\cong \Z^2$. 
By obstruction theory, the obstruction to the existence of a cross section of $\xi $ 
is determined as a second cohomology class with coefficients in $\{ \pi _1(T^2) \}$ 
(see \cite{St51, MS74} for details). Denote it by $e(\xi )$. 
Then it lies in $$H^2\left(\Sigma _g; \{\pi _1(T^2) \} \right) \cong \Z^2/\sim ,$$
where the equivalence relation $\sim $ is defined by 
\begin{eqnarray*}
(k_1, k_2)\sim (l_1, l_2) 
\Longleftrightarrow 
\text{there exists $\gamma \in \pi _1(\Sigma _g)$ such that}
\begin{pmatrix} l_1 \\ l_2 \end{pmatrix}
=\rho (\gamma ) \begin{pmatrix} k_1 \\ k_2 \end{pmatrix}.  
\end{eqnarray*}
Thus the obstruction class $e(\xi )$ is not an element of $\Z^2$, 
but that of the module $\Z^2/\sim $ represented by $(m, n)\in \Z^2$. 
This is the Euler class of an orientable $T^2$-bundle $\xi $. 

\begin{definition}[the Euler class]
Let $\xi $ be an orientable $T^2$-bundle over $\Sigma _g$. 
Then the local coefficient cohomology class 
$$e(\xi )\in H^2\left(\Sigma _g; \{\pi _1(T^2) \} \right)$$ defined as 
the obstruction class to the existence of a cross section of $\xi $ is called the Euler class of $\xi $. 
\end{definition}

\vspace{5pt}

\begin{remark}~\label{rem:Euler}
As is implied by the notation, 
the Euler class $e(\xi )$ is uniquely determined by the isomorphism class of $\xi $. 
Notice that the Euler classes of two orientable $T^2$-bundles $\xi $ and $\xi'$ are comparable as cohomology classes only when its monodromies $\rho $ and $\rho'$ coincide. 
Here we mean by the coincidence of $\rho $ and $\rho'$ that there exist $f\in \Diff _+(\Sigma _g)$ and $Q\in SL(2;\Z)$ such that $\rho \circ f_{\ast }=Q\rho 'Q^{-1}$ (see Remark~\ref{rem:monodromy}). 
\end{remark}

Now we are ready to give a necessary and sufficient condition 
for two orientable $T^2$-bundles over $\Sigma _g$ to be isomorphic. 

\vspace{8pt}

\begin{theorem}~\label{isom}
Let $\xi =(E, \pi , \Sigma _g, T^2)$ and $\xi '=(E', \pi ', \Sigma _g, T^2)$
be orientable $T^2$-bundles over $\Sigma _g$, whose monodromies and Euler classes 
we denote by $\rho, e(\xi )$ and $\rho', e(\xi')$, respectively. 
Then, the bundles $\xi $ and $\xi'$ are isomorphic if and only if 
there exist $f\in \mathrm{Diff}_+(\Sigma _g)$ and $Q\in SL(2; \Z)$ 
such that $\rho\circ f_{\ast }=Q\rho'Q^{-1}$ and $e(\xi )=f^{\ast }e(\xi')$. 
\end{theorem}

\begin{proof}
The only if part is obvious from Remarks~\ref{rem:monodromy} and~\ref{rem:Euler}, so we will prove the if part. Since $f$ and $Q$ correspond to the base map of a bundle isomorphism and the ambiguity of the choice of the mapping class $[\Psi ]\in \mathcal{M}_1$ determined by an identification diffeomorphism $\Psi \colon T^2\to F_0$, respectively, we only have to deal with the case where $\rho=\rho'$ and $e(\xi )=e(\xi')$. We construct a bundle isomorphism between $\xi $ and $\xi '$ under these assumptions. 
Since $\rho$ and $\rho'$ coincide, $\xi $ and $\xi'$ can be described as 
$$\pi \colon M(A_1,B_1, \ldots , A_g, B_g; m, n)\to \Sigma _g, \;\; 
\pi ' \colon M(A_1,B_1, \ldots , A_g, B_g; k, l)\to \Sigma _g.$$
By the condition $e(\xi )=e(\xi')$, there exists an element $\gamma \in \pi _1(\Sigma_g)$ 
such that $$\begin{pmatrix} k \\ l \end{pmatrix}=\rho (\gamma ) \begin{pmatrix} m \\ n \end{pmatrix}.$$ 
Since $\rho (\gamma )$ can be described as $\rho (\gamma )=C_{1}\cdots C_{r},$
where $r$ is a positive integer and $C_{i}\in \{E_2, A_1, \ldots, A_g, B_1, \ldots , B_g\}$ for each $1\leq i\leq r$, 
we have 
$$\rho (\gamma )=E_2+\sum_{i=1}^r(C_i-E_2)C_{i+1}\cdots C_r.$$
Putting $\w _i=C_{i+1}\cdots C_r\begin{pmatrix} m \\ n \end{pmatrix}$ for each $1\leq i\leq r$, then we obtain 
$$\begin{pmatrix}k\\l\end{pmatrix}
=\begin{pmatrix}m\\n\end{pmatrix}+\sum_{i=1}^r (C_i-E_2)\w _i.$$
Hence, there exist $\u_i, \v_i\in \Z^2$ ($1\leq i \leq g$) such that 
$$\begin{pmatrix}k\\l\end{pmatrix}
=\begin{pmatrix}m\\n\end{pmatrix}+\sum_{i=1}^g \left((A_i-E_2)\u_i +(B_i-E_2)\v_i\right).$$  
Then we can construct a bundle isomorphism 
$$\widetilde{f}\colon M(A_1,B_1, \ldots , A_g, B_g; m, n) \to M(A_1,B_1, \ldots , A_g, B_g; k, l)$$
covering the identity map $\mathrm{id}_{\Sigma _g}$ by Proposition~\ref{bundle iso} proved later. 
\end{proof}

Putting $g=1$ in Theorem~\ref{isom}, we obtain the following corollary. 

\vspace{5pt}

\begin{corollary}~\label{g=1}
Let \[\xi =\left(M(A, B; m, n), \pi , T^2, T^2\right), \;\; 
\xi'=\left(M(A', B'; m', n'), \pi ', T^2, T^2\right)\]
be two orientable $T^2$-bundles over $T^2$. 
Then they are bundle isomorphic if and only if 
there exist $P=\begin{pmatrix}p&r\\q&s\end{pmatrix}, Q\in SL(2; \Z)$ and ${\x}, {\y} \in \Z^2$ 
such that $QA'Q^{-1}=A^pB^q$, $QB'Q^{-1}=A^rB^s$ and  
\[\begin{pmatrix}m\\n\end{pmatrix}-Q\begin{pmatrix}m'\\n'\end{pmatrix}
=(A-E_2){\x}+(B-E_2){\y}. \] 
\end{corollary}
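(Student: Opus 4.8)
The plan is to derive Corollary~\ref{g=1} directly from Proposition~\ref{isom} by specializing to $g=1$ and then unwinding the two conditions ``$\rho \circ f_{\ast}=Q\rho'Q^{-1}$'' and ``$e(\xi)=f^{\ast}e(\xi')$'' into the explicit matrix and vector equations in the statement. First I would recall that for $g=1$ the base is $T^2$, whose mapping class group is $\mathcal{M}_1\cong SL(2;\Z)$, acting on $\pi_1(T^2)=\langle \alpha,\beta\rangle\cong\Z^2$. So a mapping class $f$ is represented by a matrix $P=\begin{pmatrix}p&r\\q&s\end{pmatrix}\in SL(2;\Z)$, and its induced map on $\pi_1(T^2)$ sends $\alpha\mapsto \alpha^p\beta^q$ and $\beta\mapsto \alpha^r\beta^s$ (with the convention matching the generators in Figure~\ref{fig:generators}). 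The monodromies are $\rho(\alpha)=A$, $\rho(\beta)=B$ and $\rho'(\alpha)=A'$, $\rho'(\beta)=B'$.

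Next I would translate the monodromy condition. Evaluating $\rho\circ f_{\ast}=Q\rho'Q^{-1}$ on the generators $\alpha$ and $\beta$ gives
\[
\rho(f_{\ast}\alpha)=\rho(\alpha^p\beta^q)=A^pB^q,\qquad
\rho(f_{\ast}\beta)=\rho(\alpha^r\beta^s)=A^rB^s,
\]
while the right-hand side evaluates to $Q\rho'(\alpha)Q^{-1}=QA'Q^{-1}$ and $QB'Q^{-1}$ respectively. Equating these yields exactly $QA'Q^{-1}=A^pB^q$ and $QB'Q^{-1}=A^rB^s$, which are the first two conditions of the corollary. The one subtlety to check here is that $A$ and $B$ commute (as $[A,B]=E_2$ for $g=1$), so that the words $A^pB^q$ and $A^rB^s$ are unambiguous and the homomorphism property of $\rho$ applies cleanly; I would note this explicitly.

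Then I would translate the Euler class condition. By the description in \S4.2, the Euler class of $\xi$ (resp. $\xi'$) is represented by $(m,n)$ (resp. $(m',n')$) in $H^2(T^2;\{\pi_1(T^2)\})\cong \Z^2/\!\sim$. The condition $e(\xi)=f^{\ast}e(\xi')$ compares $(m,n)$ with the pullback of $(m',n')$, which, accounting for the identification $Q$ coming from the choice of $\Psi$, amounts to requiring that $\begin{pmatrix}m\\n\end{pmatrix}-Q\begin{pmatrix}m'\\n'\end{pmatrix}$ be trivial in the local-coefficient module. For $g=1$ the equivalence relation $\sim$ is generated (via the computation in the if-part of Proposition~\ref{isom}) by the images of $(A-E_2)$ and $(B-E_2)$, so triviality means precisely
\[
\begin{pmatrix}m\\n\end{pmatrix}-Q\begin{pmatrix}m'\\n'\end{pmatrix}
=(A-E_2)\x+(B-E_2)\y
\]
for some $\x,\y\in\Z^2$, which is the third condition.

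The main obstacle I anticipate is the bookkeeping in the Euler-class step: I must verify that the subgroup of $\Z^2$ by which one quotients is exactly the image of $(A-E_2)$ and $(B-E_2)$, and that the matrix $Q$ (the fiber-identification ambiguity) enters the comparison in the stated way rather than, say, conjugating the vector differently. This is essentially the content already present in the proof of the if-part of Proposition~\ref{isom}, where the relation $\binom{k}{l}=\binom{m}{n}+\sum_i((A_i-E_2)\u_i+(B_i-E_2)\v_i)$ appears; specializing that to $g=1$ and incorporating $Q$ gives the claim. The matrix conditions are then a routine consequence of the homomorphism property and the commutativity $[A,B]=E_2$, so no genuinely new argument is needed beyond careful specialization.
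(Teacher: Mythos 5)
Your proposal is correct and matches the paper exactly: the paper gives no separate argument for Corollary~\ref{g=1}, deriving it precisely by putting $g=1$ in Proposition~\ref{isom}, with $\mathcal{M}_1\cong SL(2;\Z)$ supplying the matrix $P$ and the local-coefficient computation supplying the $(A-E_2)\x+(B-E_2)\y$ condition, just as you describe. Your explicit verification of the commutativity $[A,B]=E_2$ and of the $Q$-twisting in the Euler-class comparison is sound bookkeeping that the paper leaves implicit.
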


\subsection{$SL(2;\Z)$-bundles}
In this subsection, we introduce a nice model for the $T^2$-bundle $M(A_1, B_1, \ldots , A_g, B_g; m, n)$ 
following the manner of Sakamoto--Fukuhara \cite{SF83}, and complete the proof of Theorem~\ref{isom}. 
First we consider the case $(m, n)=(0,0)$. 
In this case, we can reduce the structure group to $SL(2;\Z)$, since the transition function $N\cap U^2\to \Diff_+(T^2)$ can be taken as the constant map to $\mathrm{id}_{T^2}$. 
Such a $T^2$-bundle is called an $SL(2;\Z)$-bundle (in particular, it is a flat $T^2$-bundle). 
An $SL(2;\Z)$-bundle $M(A_1, B_1, \ldots , A_g, B_g; 0, 0)$ can be described as follows. 
Let $\begin{pmatrix}x\\y\end{pmatrix}$ be the standard coordinates on $\R^2$, 
and $\begin{bmatrix}x\\y \end{bmatrix}$ denote the corresponding point on $T^2=\R^2/\Z^2$. 

\begin{enumerate}
\item
The case $g=0$. Let $F=T^2$ and $M(0,0)=\C P^1\times F$. 
Then the projection $\pi \colon M(0,0)\to \C P^1$ to the first factor is the trivial $T^2$-bundle over $S^2$. In particular, it is an $SL(2;\Z)$-bundle. 
\item
The case $g=1$. Let $F=T^2$ and $A, B\in SL(2;\Z)$ satisfy $AB=BA$. 
We define \[ M(A, B; 0,0)=\R^2\times F/\sim  ,\]
where 
\[\begin{pmatrix}
\begin{pmatrix}
     x+1 \\ y 
\end{pmatrix}, 
\begin{bmatrix}
     s \\ t
\end{bmatrix}
\end{pmatrix} \sim
\begin{pmatrix}
\begin{pmatrix}
     x \\ y 
\end{pmatrix}, 
\begin{bmatrix}
        A
\begin{pmatrix}
            s \\ t
\end{pmatrix}
\end{bmatrix}
\end{pmatrix}, \]
\[\begin{pmatrix}
    \begin{pmatrix}
        x \\ y+1 
    \end{pmatrix}, 
    \begin{bmatrix}
        s \\ t
    \end{bmatrix}
\end{pmatrix} \sim
\begin{pmatrix}
    
\begin{pmatrix}
    x \\ y 
\end{pmatrix}, 
\begin{bmatrix}
    B
\begin{pmatrix}
    s \\ t
\end{pmatrix}
\end{bmatrix}. 
\end{pmatrix}\] 
Then the map $\pi \colon M(A, B; 0,0) \to \R^2/\Z^2$ defined by 
\[ 
\pi 
\begin{pmatrix}
\begin{bmatrix}
\begin{pmatrix}
    x \\ y 
\end{pmatrix}, 
\begin{bmatrix}
    s \\ t
\end{bmatrix}
\end{bmatrix}
\end{pmatrix}=
\begin{bmatrix}
    x \\ y 
\end{bmatrix} \]
is an $SL(2;\Z)$-bundle over $T^2$. 
\item
The case $g\geq 2$. Let $\mathbb{D}$ be the Poincar\'{e} disk and $z$ the complex coordinate on it. 
Since $\mathbb{D}$ is the universal cover of $\Sigma _g$, 
$\pi _1(\Sigma _g)$ acts on it from the right by the deck transformation. 
Let $F=T^2$ and $A_1, B_1, \ldots , A_g, B_g\in SL(2;\Z)$ satisfy $[A_1, B_1]\cdots [A_g, B_g]=E_2$. 
We define  
\[ M(A_1, B_1, \ldots , A_g, B_g; 0, 0)= \mathbb{D}\times F / \sim ,\]
where
\[
\left(z, 
\begin{bmatrix}
     s \\ t
\end{bmatrix}
\right) \sim
\left(z\cdot \gamma ^{-1}, 
\begin{bmatrix}
\rho (\gamma )
\begin{pmatrix}
            s \\ t
\end{pmatrix}
\end{bmatrix}
\right) \;\;\; \left(\gamma \in \pi _1(\Sigma _g) \right). \]
Then the map $\pi \colon M(A_1, B_1, \ldots , A_g, B_g; 0, 0) \to \mathbb{D}/\pi _1(\Sigma _g)$ defined by 
\[ 
\pi 
\begin{pmatrix}
\begin{bmatrix}
z, 
\begin{bmatrix}
    s \\ t
\end{bmatrix}
\end{bmatrix}
\end{pmatrix}=
[z] \]
is an $SL(2; \Z)$-bundle over $\Sigma _g$. 
\end{enumerate}

Next, we prepare a model for the case where $(m, n)\ne (0,0)$. 
In the following, we denote $M(A_1,B_1, \ldots A_g, B_g;0,0)$ by $M_0$. 
Moreover, $D$ denotes a $2$-disk of a sufficiently small radius $\ve $ 
that is centered at the origin $0=[0:1]$ of $\C \subset \C P^1$, 
at $\begin{bmatrix}
1/2 \\ 1/2
\end{bmatrix}$ on the torus $T^2=\R^2/\Z^2$, or 
at $[0]$ on $\Sigma _g=\mathbb{D}/\pi _1(\Sigma _g)$ 
according as $g=0, 1$, or $g\geq 2$. 
Then we define 
\[M(A_1,B_1, \ldots , A_g, B_g;m,n) = \left(M_0- \pi^{-1}(\mathrm{Int}{D})\right) \cup_h (D\times F).  \]
Here $h\colon \pi^{-1}(\partial D) \to \partial D\times F$ is the diffeomorphism given by 
\[
h\begin{pmatrix}
\begin{bmatrix}
r (\theta), 
\begin{bmatrix}
    s \\ t
\end{bmatrix}
\end{bmatrix}
\end{pmatrix}
= 
\begin{pmatrix}
r (\theta), 
    \begin{bmatrix}
        \begin{pmatrix}
            s \\ t
        \end{pmatrix}
        + (\theta /2\pi)
        \begin{pmatrix}
            m \\ n
        \end{pmatrix}
    \end{bmatrix}
\end{pmatrix}, \]
where $r \colon S^1\to \mathbb{D}$ is the map defined by 
\begin{eqnarray*}
r (\theta) =
\begin{cases}
[\ve e^{i\theta }: 1] \;\; (g=0), \\
\big(1/2 + \varepsilon\mathrm{cos}\theta , \; 1/2 + \varepsilon\mathrm{sin}\theta \big) \;\; (g=1),\\
\ve e^{i\theta } \;\; (g\geq 2).
\end{cases} 
\end{eqnarray*}
Now we redefine the map $\pi \colon M(A_1,B_1, \ldots, A_g, B_g; m,n)\to \Sigma _g$ by 
\begin{align*}
\pi
\begin{pmatrix}
    [z_1:z_2], 
\begin{bmatrix}
    s \\ t
\end{bmatrix}
\end{pmatrix}
&=[z_1:z_2]
\;\; (g=0), \notag\\
\pi\begin{pmatrix}
\begin{bmatrix}
\begin{pmatrix}
        x \\ y
    \end{pmatrix}, 
    \begin{bmatrix}
        s \\ t
    \end{bmatrix}
\end{bmatrix}
\end{pmatrix}
&=
\begin{bmatrix}
    x \\ y
\end{bmatrix}
\;\; (g=1), \notag\\
\pi
\begin{pmatrix}
\begin{bmatrix}
    z, 
\begin{bmatrix}
    s \\ t
\end{bmatrix}
\end{bmatrix}
\end{pmatrix}
&=[z]
\;\;\;\; (g\geq 2). 
\end{align*}
Then it is an orientable $T^2$-bundle over $\Sigma _g$. 
Thus we obtain the following conclusion:  
Any orientable $T^2$-bundle can be obtained from an $SL(2;\Z)$-bundle by some $T^2$-surgery. 
In the subsequent sections, 
we always denote an $SL(2;\Z)$-bundle by $M(A_1, B_1, \ldots , A_g, B_g)$ omitting $(m, n)=(0,0)$. \\

Now we are ready to complete the proof of Theorem~\ref{isom}. \\

\begin{prop}~\label{bundle iso}
The two orientable $T^2$-bundles 
$$\pi \colon M(A_1,B_1, \ldots , A_g, B_g; m, n)\to \Sigma _g, \;\; 
\pi ' \colon M(A_1,B_1, \ldots , A_g, B_g; k, l)\to \Sigma _g$$ 
are isomorphic if there exist $\u _i, \v _i\in \Z ^2$ such that 
$$\begin{pmatrix}k\\l\end{pmatrix}=\begin{pmatrix}m\\n\end{pmatrix}+\sum_{i=1}^g \left((A_i-E_2)\u_i +(B_i-E_2)\v_i\right).$$
\end{prop}
\begin{proof}
We explicitly construct a bundle isomorphism 
$$\widetilde{f}\colon M(A_1, B_1, \ldots , A_g, B_g; k, l) \to M(A_1, B_1, \ldots , A_g, B_g; m, n).$$
In the following, we assume $g\geq 2$, since the claim is trivial when $g=0$ 
and the construction has been done in Proposition 2 (3) of \cite{SF83} when $g=1$. 
The idea of our construction is as follows. 
First we construct a bundle isomorphism over the $1$-skeleton of the standard cellular decomposition of $\Sigma_g$ 
so that it coincides with $\widetilde{f}_{B_i\v_i}\colon M(A_i)\to M(A_i)$ over $e^1_{2i-1}$, 
and with $\widetilde{f}_{-\u_i}\colon M(B_i)\to M(B_i)$ over $e^1_{2i}$. 
Then it extends over the $2$-cell $e^2$ to become a bundle isomorphism $\widetilde{f}$ over $\Sigma _g$. 

Let us write down $\widetilde{f}$ explicitly by using the above model of orientable $T^2$-bundles. 
First we take a fundamental domain $P$ of the action of $\pi _1(\Sigma_g)$ 
which is a hyperbolic regular $4g$-gon centered at $0\in \mathbb{D}$. 
Notice that each vertex of $P$ corresponds to the base point $b_0$ of $\Sigma_g$ and the boundary $\partial P$ is written 
$$\partial P
=[\alpha _1, \beta _1][\alpha _2, \beta _2]\cdots [\alpha _g, \beta _g]
=\alpha _1\beta _1\alpha _1^{-1}\beta _1^{-1}\alpha _2\beta _2\alpha _2^{-1}\beta _2^{-1}\cdots \alpha _g\beta _g\alpha _g^{-1}\beta _g^{-1}
$$
as an element of $\pi _1(\Sigma_g, b_0)$. 
We take a parametrization $w\colon [0, 4g]\to \partial P\cong S^1$ so that for each integer $4i+j$ ($0\leq i\leq g, \;j=0,1,2,3$), 
the interval $[4i+j, 4i+j+1]$ is mapped to $\alpha _i$, $\beta _i$, $\alpha _i^{-1}$, $\beta _i^{-1}$ according as $j=0,1,2,3$, and that 
\begin{eqnarray*}
w(u) \cdot \beta _i&=&w(8i+3-u) \;\;\;\; (4i\leq u\leq 4i+1),\\ 
w(u) \cdot \alpha _i^{-1}&=&w(8i+5-u) \;\;\;\; (4i+1\leq u\leq 4i+2).
\end{eqnarray*}
Now we define 
$$\widetilde{f}\colon \pi ^{-1}(\partial P) \to {\pi '}^{-1}(\partial P)$$ by 
\begin{eqnarray*}
\widetilde{f}
\begin{pmatrix}
\begin{bmatrix}
w(u), \begin{bmatrix}
    s \\ t
\end{bmatrix}
\end{bmatrix}
\end{pmatrix}
&=&
\begin{bmatrix}
w(u), \begin{bmatrix}
\begin{pmatrix}
    s \\ t
\end{pmatrix}+(u-4i)B_i\v _i
\end{bmatrix}
\end{bmatrix} \;\;\;\;\;\;\;\;
(4i\leq u\leq 4i+1), \\
& &
\begin{bmatrix}
w(u), \begin{bmatrix}
\begin{pmatrix}
    s \\ t
\end{pmatrix}+(4i+2-u)\u _i
\end{bmatrix}
\end{bmatrix} 
\;\;\;\;\;\; 
(4i+1\leq u\leq 4i+2), \\
& &
\begin{bmatrix}
w(u), 
\begin{bmatrix}
\begin{pmatrix}
    s \\ t
\end{pmatrix}+(4i+3-u)\v _i
\end{bmatrix}
\end{bmatrix}
\;\;\;\;\;\;
(4i+2\leq u\leq 4i+3), \\
& &
\begin{bmatrix}
w(u), 
\begin{bmatrix}
\begin{pmatrix}
    s \\ t
\end{pmatrix}+(u-4i-3)A_i\u _i
\end{bmatrix} 
\end{bmatrix}
\;\; 
(4i+3\leq u\leq 4i+4).
\end{eqnarray*}
This induces a bundle isomorphism over the $1$-skeleton and, 
moreover, specifies a bundle isomorphism over $\partial P$,  
which corresponds to an element of $\pi _1(\Diff_+(T^2))=\Z^2$. 
In fact, this element is exactly $(k-m, l-n)$, since 
$$\sum_{i=1}^g (B_i\v _i-\u _i-\v _i+A_i\u _i)
=\sum_{i=1}^g \left((A_i-E_2)\u_i +(B_i-E_2)\v_i\right)
=\begin{bmatrix}
    k-m \\ l-n
\end{bmatrix}$$ by assumption. 
This resolves the difference in the coefficients of the $T^2$-surgeries corresponding to the two bundles $\pi$ and $\pi'$.
Consequently, $\widetilde{f}$ extends in a natural way over the $2$-cell $P$, giving rise to a bundle isomorphism that covers the identity map of $\Sigma_g$. 
\end{proof}

\subsection{Fiber connected sum}
In this section, we explain the fiber connected sum, 
a method of constructing a new $T^2$-bundle from two $T^2$-bundles. 
Let $\xi =(M, \pi , \Sigma _g, T^2)$ and $\xi'=(M', \pi ', \Sigma _{g'}, T^2)$ be orientable $T^2$-bundles. 
Let $b_0$ (resp. $b_0'$) be a base point on the surface $\Sigma _g$ (resp. $\Sigma _{g'}$). 
We take local trivializations of $\xi $ and $\xi'$ around each base point, and denote them by 
\[ \phi _U\colon \pi^{-1}(U)\to U\times T^2, \;\; 
{\phi' }_{U'}\colon \pi^{-1}(U')\to U'\times T^2, \] respectively. 
Now we take $2$-disks $D$ and $D'$ so that $b_0\in D\subset U$ and $b_0'\in D'\subset U'$. 
Then $\partial D$ and $\partial D'$ carry natural orientations derived from those of $\Sigma _g$ and $\Sigma_{g'}$, respectively. 
We take an orientation reversing diffeomorphism $f\colon \partial D\to \partial D'$ 
and an orientation preserving diffeomorphism $h \colon T^2\to T^2$. 

\begin{figure}[h]
     \centering
     \includegraphics[width=11cm, pagebox=cropbox,clip]{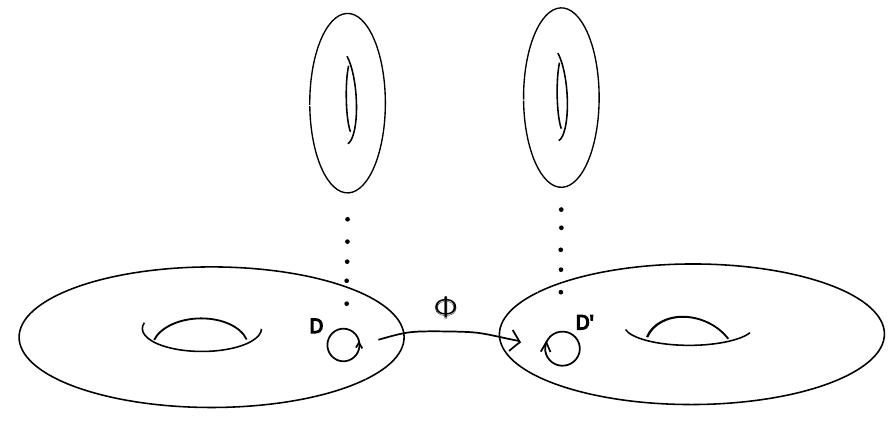}
     \caption{Fiber connected sum}
     
 \end{figure}
Then we obtain an orientable $T^2$-bundle over $\Sigma_{g+g'}$ from two bundles 
$\pi | \colon \pi ^{-1}(\Sigma _g\setminus D)\to \Sigma _g\setminus D$ and  
$\pi '| \colon {\pi '}^{-1}(\Sigma _{g'}\setminus D')\to \Sigma _{g'}\setminus D'$ 
by the fiberwise gluing along their boundaries given by 
$$\Phi\colon \pi^{-1}(\partial D)\xrightarrow{\phi_U} \partial D\times T^2\xrightarrow{f\times h} \partial D'\times T^2\xrightarrow {({\phi '}_{U'})^{-1}}\pi'^{-1}(\partial D'). $$
This is called the fiber connected sum of $\xi $ and $\xi'$, 
and denoted by $\xi \# \xi '$ or $$\pi \# \pi '\colon M\#_{f}M' \to \Sigma _{g+g'}.$$
By this definition, however, the diffeomorphism type of $M\#_{f}M'$ depends on 
the choices of the local trivializations $\phi _U$, ${\phi' }_{U'}$ and the diffeomorphism $h$. 
In order to avoid such an ambiguity, we fix $\phi _U$, ${\phi' }_{U'}$ and $h$ as follows. 


First we describe the two $T^2$-bundles $\xi $ and $\xi'$ by 
\[ \pi \colon M(A_1, B_1, \ldots , A_g, B_g; m, n)\to \Sigma _g, \;\; 
\pi ' \colon M(C_1, D_1, \ldots , C_{g'}, D_{g'}; k, l)\to \Sigma _{g'}, \]
and take the local trivializations $\phi _U$ and $\phi'_{U'}$ 
so that ${\phi _U} |_{F_0}=\Psi ^{-1}$ and $\phi '_{U'} |_{F'_0}={\Psi' }^{-1}$,  
where $\Psi \colon T^2\to F_0=\pi ^{-1}(b_0)$ and $\Psi'\colon T^2\to F'_0=\pi '^{-1}(b'_0)$ 
are the diffeomorphisms fixed in the definition of the monodromies of $\pi $ and $\pi'$. 
Now we fix $h=\mathrm{id}_{T^2}$, then the fiber connected sum of $\xi $ and $\xi'$ can be written as 
\[ \pi \# \pi' \colon M(A_1, B_1, \ldots , A_g, B_g, C_1, D_1, \ldots , C_{g'}, D_{g'}; m+k, n+l) 
\to \Sigma _{g+g'}. \] 
In particular, 
$$M(A, B; m, n)\#_f M(C, D; k, l)\cong M(A, B, C, D; m+k, n+l)$$
holds when $g=g'=1$. 
Thus the monodromy and the Euler class of the $T^2$-bundle $\xi \# \xi'$ 
is described by those of $\xi $ and $\xi'$. 

\subsection{Decomposition theorem}

Recall that any orientable $T^2$-bundle over $\Sigma _g$ can be described as 
$$\pi \colon M(A_1, B_1, \ldots , A_g, B_g; m, n)\to \Sigma _g$$ 
by $(m, n)\in \Z^2$ and $A_1, B_1, \ldots , A_g, B_g\in SL(2;\Z)$ with $[A_1, B_1]\cdots [A_g, B_g]=E_2$. 
Then, applying Corollary~\ref{sl} and Proposition~\ref{isom} to the monodromy $\widetilde{\rho }$ of $\pi $, we obtain the following. 

\vspace{8pt}

\begin{theorem}~\label{thm:pre-classification}
Let $\xi $ be an orientable $T^2$-bundle over $\Sigma _g$. 
Then there exist $B_1, \ldots , B_g\in SL(2;\Z)$ and $(m,n)\in \Z^2$ such that 
$\xi $ is isomorphic to either of the $2^g$ $T^2$-bundles $M(\pm E_2, B_1, \ldots , \pm E_2, B_g; m, n)$. 
\end{theorem}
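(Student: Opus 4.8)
The plan is to read off the statement as a direct translation of Corollary~\ref{sl} into the language of $T^2$-bundles, using the dictionary established in \S4.2. First I would invoke Proposition~\ref{model} to write $\xi$ as $M(A_1, B_1, \ldots, A_g, B_g; m, n)$ for some $A_i, B_i \in SL(2;\Z)$ with $[A_1, B_1]\cdots [A_g, B_g]=E_2$ and some $(m,n)\in\Z^2$. The monodromy of this bundle is the homomorphism $\widetilde{\rho}\colon \pi_1(\Sigma_g)\to SL(2;\Z)$ determined by $\widetilde{\rho}(\alpha_i)=A_i$ and $\widetilde{\rho}(\beta_i)=B_i$.

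Next I would apply Corollary~\ref{sl} to $\widetilde{\rho}$: there exists a mapping class $f\in\mathcal{M}_g$ such that $(\widetilde{\rho}\circ f_{\ast})(\alpha_i)=\pm E_2$ for each $i$ with $1\leq i\leq g$. The point is that $f$ is realized by a diffeomorphism $\phi\in\Diff_+(\Sigma_g)$, and the composite $\widetilde{\rho}\circ f_{\ast}$ is precisely the monodromy of $\xi$ read with respect to the new generating system $f_{\ast}(\alpha_i), f_{\ast}(\beta_i)$, equivalently the monodromy of a bundle whose isomorphism type is unchanged. Setting $\ve_i E_2 := (\widetilde{\rho}\circ f_{\ast})(\alpha_i)$ (so each $\ve_i\in\{\pm 1\}$) and $B_i' := (\widetilde{\rho}\circ f_{\ast})(\beta_i)$, the relator $[\widetilde{\rho}(\alpha_1),\widetilde{\rho}(\beta_1)]\cdots =E_2$ is automatically preserved since $\pm E_2$ is central, so these data again satisfy the defining commutator relation and genuinely present a $T^2$-bundle.

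The remaining step is to conclude that $\xi$ is isomorphic to $M(\ve_1 E_2, B_1', \ldots, \ve_g E_2, B_g'; m', n')$ for a suitable $(m',n')$. This is where Proposition~\ref{isom} enters: the two bundles have monodromies that coincide in the sense of Remark~\ref{rem:monodromy} (related by the base map $f$ and the trivial conjugation $Q=E_2$), so by Proposition~\ref{isom} they are isomorphic once we choose the Euler class of the new bundle to be $f^{\ast}e(\xi)$, which fixes the representative $(m',n')$. Relabeling $B_i'$ as $B_i$ and $(m',n')$ as $(m,n)$ gives the asserted form. Since each $\ve_i$ is independently either $+1$ or $-1$, there are exactly $2^g$ possible sign patterns, which is the source of the ``$2^g$ bundles'' in the statement.

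The main obstacle I anticipate is purely bookkeeping: making precise that applying $f_{\ast}$ to the monodromy corresponds to an actual bundle isomorphism rather than merely an algebraic manipulation of the presentation. This is exactly the content packaged into Remark~\ref{rem:monodromy} and Proposition~\ref{isom}, so the work is to cite them correctly and to verify that the resulting tuple still satisfies $[\ve_1 E_2, B_1]\cdots [\ve_g E_2, B_g]=E_2$ — which is immediate from the centrality of $\pm E_2$. No genuinely new argument is needed beyond Corollary~\ref{sl}; the theorem is essentially its geometric repackaging.
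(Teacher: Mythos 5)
Your proposal is correct and takes essentially the same route as the paper, which proves Theorem~\ref{thm:pre-classification} precisely by applying Corollary~\ref{sl} and Proposition~\ref{isom} to the monodromy $\widetilde{\rho}$; your version simply spells out the bookkeeping the paper leaves implicit (that the new tuple still satisfies the commutator relation by centrality of $\pm E_2$, and that a representative $(m',n')$ of the pulled-back Euler class can be chosen so that Proposition~\ref{isom} applies with $Q=E_2$). No gaps.
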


Moreover, the following is obtained as a corollary. 

\vspace{8pt}

\begin{theorem}~\label{thm:decomp}
Any orientable $T^2$-bundle over $\Sigma _g$ with $g\geq 1$ is isomorphic to 
the fiber connected sum of $g$ pieces of $T^2$-bundles over $T^2$. 
\end{theorem}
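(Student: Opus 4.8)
The plan is to combine the normal form from Theorem~\ref{thm:pre-classification} with the explicit description of the fiber connected sum obtained in \S4.4. First I would invoke Theorem~\ref{thm:pre-classification} to replace $\xi$ by an isomorphic bundle of the shape
\[ M(\ve_1 E_2, B_1, \ldots , \ve_g E_2, B_g; m, n), \]
where each $\ve_i\in\{\pm 1\}$ and $B_1,\ldots ,B_g\in SL(2;\Z)$. The key point of this reduction is that the first entry of each genus block is now central in $SL(2;\Z)$, so the commutator relation $[\ve_1 E_2, B_1]\cdots [\ve_g E_2, B_g]=E_2$ is automatically satisfied and the individual pairs $(\ve_i E_2, B_i)$ impose no interaction between distinct blocks.

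Next I would recall from \S4.4 the explicit formula for the fiber connected sum with the normalizations $h=\mathrm{id}_{T^2}$ and the trivializations fixed via the identifications $\Psi$. That formula reads
\[ M(A_1,B_1,\ldots ,A_g,B_g;m,n)\#_f M(C_1,D_1,\ldots ,C_{g'},D_{g'};k,l)
\cong M(A_1,B_1,\ldots ,C_{g'},D_{g'};m+k,n+l), \]
so the effect on monodromy data is simply concatenation of the $SL(2;\Z)$-lists, and the effect on Euler numbers is addition. Reading this backwards gives exactly a decomposition statement: a bundle whose monodromy list is a concatenation of $g$ pairs and whose Euler number is a sum of $g$ contributions is the fiber connected sum of $g$ bundles over $T^2$, one for each pair.

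With these two ingredients the proof assembles directly. For each $i$ with $1\le i\le g$ set $\xi_i := M(\ve_i E_2, B_i; m_i, n_i)$, a $T^2$-bundle over $T^2$, where I distribute the Euler number by choosing any $(m_i,n_i)\in\Z^2$ with $\sum_{i=1}^g m_i=m$ and $\sum_{i=1}^g n_i=n$ (for instance $m_1=m$, $n_1=n$ and all other contributions zero). Applying the fiber connected sum formula $g-1$ times yields
\[ \xi_1\#\xi_2\#\cdots \#\xi_g\cong M(\ve_1 E_2,B_1,\ldots ,\ve_g E_2,B_g;m,n), \]
which is isomorphic to $\xi$ by the first step. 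I should check that each $\xi_i$ is a genuine $T^2$-bundle over $T^2$, i.e.\ that the genus-$1$ commutator condition holds; but since $[\ve_i E_2, B_i]=E_2$ automatically (the first matrix is central), this is immediate, and the hypothesis $g\ge 1$ guarantees there is at least one block.

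The main obstacle I anticipate is bookkeeping rather than conceptual: verifying that the fiber connected sum formula of \S4.4 really does compose associatively to produce a list of length $2g$ from $g$ length-$2$ lists, and that the Euler numbers add as claimed under the fixed normalizations. This is where one must be careful that the chosen local trivializations and the convention $h=\mathrm{id}_{T^2}$ are compatible across all $g-1$ gluings, so that no spurious twisting is introduced; once the formula is accepted as stated in \S4.4, the argument is a direct concatenation and the result follows.
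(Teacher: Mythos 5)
Your proposal is correct and follows essentially the same route as the paper: both apply Theorem~\ref{thm:pre-classification} to put the bundle in the form $M(\ve_1E_2, B_1, \ldots , \ve_gE_2, B_g; m, n)$, note that centrality of $\pm E_2$ makes each pair a valid monodromy over $T^2$, and then invoke the \S4.4 fiber connected sum formula (with the paper making the same choice you suggest of placing the entire Euler number $(m,n)$ on the first factor and $(0,0)$ on the rest). No gaps.
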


\begin{proof}
Since $\pm E_2$ commute with any element of $SL(2;\Z)$, 
$$M(\pm E_2, B_1; m, n), \; M(\pm E_2, B_2; 0,0), \; \cdots , \; M(\pm E_2, B_g; 0, 0)$$
are all orientable $T^2$-bundles over $T^2$. 
By the consideration in \S 4.4, it follows that 
\begin{eqnarray*}
M(\pm E_2, B_1; m, n) \#_f M(\pm E_2, B_2; 0,0) \#_f \cdots \#_f M(\pm E_2, B_g; 0, 0)\\
\cong M(\pm E_2, B_1, \pm E_2, B_2, \ldots , \pm E_2, B_g; m, n). 
\end{eqnarray*}
Therefore, by Theorem~\ref{thm:pre-classification}, any orientable $T^2$-bundle over $\Sigma _g$ is 
isomorphic to the fiber connected sum of $g$ pieces of $T^2$-bundles over $T^2$. 
\end{proof}

\section{Classification of orientable $T^2$-bundles over $\Sigma _g$}
In this section, we prove the classification theorem of orientable $T^2$-bundles over $\Sigma _g$
(Theorem~\ref{main thm}), which is our main theorem. Since the classification is already known for the case $g=0, 1$ (\S5.1, 5.2), the main part is the case $g\geq 2$ (\S5.3). There, we first classify the $\mathcal{M}_g$-orbits of $\mathrm{Hom}(\pi _1(\Sigma _g), SL(2; \Z))$ (Theorem~\ref{hom-sl}). 
This is equivalent, up to the conjugate action of $SL(2; \Z)$, to the classification of isomorphism classes of $SL(2;\Z)$-bundles over $\Sigma _g$. Hence, we can complete the proof of the main theorem by adding a simple argument about Euler classes to it. 

\vspace{5pt}

\subsection{The case $g=0$}~\label{genus 0}
Since $S^2$ is simply-connected, any $T^2$-bundle over $S^2$ has trivial monodromy, 
and hence, admits a principal $T^2$-bundle structure. 
The isomorphism classes of principal $T^2$-bundles are classified by their Euler classes 
$$(m, n)\in \pi_2(BT^2)=\pi _2(BS^1\times BS^1)=\pi _2(\C P^{\infty }\times \C P^{\infty })=\Z^2,$$ 
but as orientable $T^2$-bundles, $M(m, n)$ and $M(d,0)$ are isomorphic by Remark~\ref{rem:Euler}, 
where $d$ is the greatest common divisor of $m$ and $n$. 
Thus the isomorphism classes of orientable $T^2$-bundles are classified by $d\in \Z_{\geq 0}$. 

\subsection{Theorem of Sakamoto and Fukuhara (the case $g=1$)}

The isomorphism classes of orientable $T^2$-bundles over $T^2$ have been completely classified as follows. 

\vspace{8pt}

\begin{theorem}[Sakamoto-Fukuhara \cite{SF83}]~\label{SF}
Let $\xi =(M^4, \pi , T^2, T^2)$ be an orientable $T^2$-bundle over $T^2$. 
Then there exist $B\in SL(2; \Z)$ and $(m, n)\in \Z^2$ such that 
$\xi $ is isomorphic to either $M(E_2, B; m, n)$ or $M(-E_2, B; m, n)$. 
Moreover, two orientable $T^2$-bundles $M(\ve E_2, B; m, n)$ and $M(\delta E_2, C; k, l)$ 
are isomorphic if and only if one of the following conditions is satisfied, where $\ve, \delta =\pm 1$. 
\begin{enumerate}
\item
$\ve=\delta =1$ and there exist $Q\in SL(2; \Z)$ and $\x \in \Z^2$ such that $QCQ^{-1}=B^{\pm 1}$ and 
\[\begin{pmatrix}m\\n \end{pmatrix}-Q\begin{pmatrix}k\\l\end{pmatrix}=(B-E_2){\x}. \]
\item
$\ve =1$, $\delta =-1$, $\ord (B)$ is either $2$, $4$ or $6$ and there exist $Q\in SL(2; \Z)$ and $\x \in \Z^2$ such that $QCQ^{-1}=\pm B^{\pm 1}$ and 
\[\begin{pmatrix}m\\n \end{pmatrix}-Q\begin{pmatrix}k\\l\end{pmatrix}=(B-E_2){\x}. \]
\item
$\ve =-1$, $\delta =1$, $\ord (C)$ is either $2$, $4$ or $6$ and there exist $Q\in SL(2; \Z)$ and $\x \in \Z^2$ such that $QBQ^{-1}=\pm C^{\pm 1}$ and 
\[\begin{pmatrix}k\\l \end{pmatrix}-Q\begin{pmatrix}m\\n\end{pmatrix}=(C-E_2){\x}.\]
\item
$\ve=\delta =-1$ and there exist $Q\in SL(2; \Z)$ and $\x, \y \in \Z^2$ 
such that $QCQ^{-1}=\pm B^{\pm 1}$ and 
\[\begin{pmatrix}m\\n \end{pmatrix}-Q\begin{pmatrix}k\\l\end{pmatrix}=(B-E_2)\x +2\y .\]
\end{enumerate}
\end{theorem}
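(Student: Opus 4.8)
The plan is to split the statement into an \emph{existence} part---that every orientable $T^2$-bundle over $T^2$ has a normal form $M(\ve E_2, B; m, n)$ with $\ve=\pm1$---and a \emph{classification} part, the four conditions. For existence, note that the monodromy is a homomorphism $\widetilde\rho\colon \pi_1(T^2)=\Z^2\to SL(2;\Z)$, so $A=\widetilde\rho(\alpha)$ and $B=\widetilde\rho(\beta)$ commute and generate an abelian subgroup of $SL(2;\Z)$. I would first record that such a subgroup is, up to conjugacy, either a finite cyclic group $\langle C\rangle$ with $C$ elliptic (of order in $\{1,2,3,4,6\}$) or contained in $\{\pm E_2\}\langle C\rangle$ for some $C$ of infinite order (parabolic or hyperbolic). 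The mapping class group $\mathcal{M}_1\cong SL(2;\Z)$ acts by $(A,B)\mapsto(A^{p}B^{q},A^{r}B^{s})$ with $ps-qr=1$, and a B\'ezout choice of $(p,q)$ kills the $C$-part of $A$, leaving $A=\pm E_2$. (Alternatively, existence is immediate from Corollary~\ref{sl} with $g=1$.) The sign $\ve=-1$ survives exactly when $-E_2$ is an independent factor, i.e.\ $-E_2\notin\langle C\rangle$ with $C$ of infinite order, since then no power of $B$ can absorb it.

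For the classification part the key tool is Corollary~\ref{g=1}, applied with $A=\ve E_2$, $B$ for $\xi$ and $A'=\delta E_2$, $B'=C$ for $\xi'$. Its three conditions specialize to
\begin{align*}
\delta E_2 &= \ve^{p}B^{q}, \\
QCQ^{-1} &= \ve^{r}B^{s}, \\
\begin{pmatrix} m \\ n \end{pmatrix}-Q\begin{pmatrix} k \\ l \end{pmatrix} &= (\ve-1)\,{\x}+(B-E_2)\,{\y},
\end{align*}
for some $P=\pmt{p&r\\q&s}$, $Q\in SL(2;\Z)$ and ${\x},{\y}\in\Z^2$. Since $\ve-1$ equals $0$ when $\ve=1$ and $-2$ when $\ve=-1$, and ${\x}$ ranges over all of $\Z^2$, this is precisely what turns the Euler equation into $(B-E_2){\x}$ in conditions (1),(2),(3) and into $(B-E_2){\x}+2{\y}$ in condition (4). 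The first specialized equation is decisive: it forces $B^{q}=\pm E_2$, and a sign flip ($\delta\ne\ve^{p}$) can occur only if $B^{q}=-E_2$ for some $q$, which happens exactly when $\ord(B)\in\{2,4,6\}$. This is the origin of the order hypotheses in conditions (2) and (3).

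I would then run the four cases $(\ve,\delta)\in\{\pm1\}^2$. For $\ve=\delta=1$ one gets $B^{q}=E_2$ and $QCQ^{-1}=B^{s}$; for $\ve=1,\delta=-1$ one gets $B^{q}=-E_2$ (whence $\ord(B)\in\{2,4,6\}$) and $QCQ^{-1}=B^{s}$; the case $\ve=-1,\delta=1$ is the mirror image obtained by swapping the two bundles and replacing $Q$ by $Q^{-1}$, which is why it is stated in terms of $\ord(C)$; and $\ve=\delta=-1$ reproduces $QCQ^{-1}=\pm B^{\pm1}$ together with the $2{\y}$ term. Sufficiency in each case is routine: one reads off $P,Q,{\x},{\y}$ from the stated data and checks the three equations.

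The main obstacle, and the point I would treat most carefully, is reconciling the general solutions with the clean form $QCQ^{-1}=\pm B^{\pm1}$ (resp.\ $B^{\pm1}$) in the conditions. When $q\ne0$ the relation $B^{q}=\pm E_2$ forces $\ord(B)=n\in\{1,2,3,4,6\}$, so a priori the second equation only yields $QCQ^{-1}=B^{s}$ with $s$ constrained by $ps-qr=1$, i.e.\ $ps\equiv1\pmod n$ (or mod $n/2$ once the sign is accounted for). The crucial arithmetic fact is that for exactly these $n$ one has $(\Z/n)^{\times}=\{\pm1\}$, so $s\equiv\pm1$ modulo the relevant modulus and hence $B^{s}\in\{\pm B^{\pm1}\}$; for $\ord(B)=6$, say, one uses $-E_2=B^{3}$ to rewrite $B^{2}=-B^{-1}$, which is what makes the extra sign in $\pm B^{\pm1}$ both necessary and sufficient. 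Verifying these congruences case by case, and tracking how $-E_2=B^{n/2}$ moves between the two sides of the equations, is the genuinely delicate bookkeeping; everything else is a direct translation through Corollary~\ref{g=1}.
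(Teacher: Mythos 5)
Your proposal is correct and takes essentially the same route as the paper: existence of the normal form $M(\pm E_2,B;m,n)$ via Corollary~\ref{sl} at $g=1$ (Theorem~\ref{thm:pre-classification}), and the four conditions by specializing Corollary~\ref{g=1} to $A=\ve E_2$, $A'=\delta E_2$. The only organizational difference is that where you solve the congruences $ps-qr=1$, $B^q=\pm E_2$ uniformly using $(\Z/n)^{\times}=\{\pm 1\}$ for $n\in\{1,2,3,4,6\}$, the paper extracts the order restrictions by comparing monodromy images and delegates the explicit matrices $P$ to Proposition~\ref{finite order} --- the same arithmetic in different packaging.
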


The former claim is nothing but a special case of Theorem~\ref{thm:pre-classification}. 
On the other hand, we need another lemma for the proof of the latter claim. 
As is well known, the order of $B$ in $SL(2; \Z)$ is either $1$, $2$, $3$, $4$, $6$ or $\infty $. 
Hence, the cyclic subgroup $\left<B\right>$ of $SL(2; \Z)$ generated by $B$ contains $-E_2$ 
if and only if $\ord (B)$ is either $2$, $4$ or $6$. 
In these cases, the following holds. 

\vspace{8pt}

\begin{prop}~\label{finite order}
If the order of $B\in SL(2;\Z)$ is equal to $4$, 
all the four $T^2$-bundles $M(\pm E_2, \pm B; m,n)$ are isomorphic to each other. 
If it is equal to $2$ or $6$, the following holds; 
\begin{eqnarray*}
M(E_2, B; m,n)\cong M(-E_2, B; m,n)\cong M(-E_2, -B; m,n) \not \cong M(E_2, -B; m,n), 
\end{eqnarray*}
where $\cong $ and $\not \cong $ mean ``isomorphic" and ``non-isomorphic" as $T^2$-bundles, respectively. 
\end{prop}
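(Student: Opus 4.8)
The plan is to handle the three asserted isomorphisms in each case by the genus-one criterion Corollary~\ref{g=1}, and to detect the single non-isomorphism by an invariant of the monodromy image. Everything rests on a few elementary facts about finite-order elements of $SL(2;\Z)$: if $\ord(B)=2$ then $B=-E_2$; if $\ord(B)=4$ then $B^2=-E_2$, so $-B=B^{-1}=B^3\in\langle B\rangle$; and if $\ord(B)=6$ then $B^3=-E_2$, $-B=B^4$, and $\ord(-B)=3$. In all three cases $-E_2\in\langle B\rangle$, whereas $-E_2\in\langle -B\rangle$ holds \emph{precisely} when $\ord(B)=4$ (since then $\langle -B\rangle=\langle B^{-1}\rangle=\langle B\rangle$, while for orders $2,6$ one has $\ord(-B)\in\{1,3\}$). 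This dichotomy is exactly what separates the two halves of the statement.

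For the order-$4$ assertion I would verify $M(E_2,B;m,n)\cong M(-E_2,B;m,n)$, $M(E_2,B;m,n)\cong M(E_2,-B;m,n)$, and $M(-E_2,B;m,n)\cong M(-E_2,-B;m,n)$ by exhibiting the data $P,Q,\x,\y$ required in Corollary~\ref{g=1}; the remaining identifications follow by transitivity. In each case I take $Q=E_2$ and $\x=\y=0$, so the Euler-class condition reads $0=0$ and only the monodromy conditions $QA'Q^{-1}=A^pB^q$ and $QB'Q^{-1}=A^rB^s$ survive. Because $-E_2=B^2$, a sign change in the first slot is realized by the column entry $q=2$ (or the scalar $(-1)^p$), and because $-B=B^3$, a sign change in the second slot is realized by $s=\pm1$ together with a suitable power of $B$; in every instance $p,r$ can be chosen so that $\det P=1$, giving explicit elementary matrices $P$. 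Thus all four bundles are mutually isomorphic.

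For the order-$2$ and order-$6$ assertion, the three isomorphisms $M(E_2,B;m,n)\cong M(-E_2,B;m,n)\cong M(-E_2,-B;m,n)$ are obtained by the same recipe: since $-E_2\in\langle B\rangle$ (namely $-E_2=B$ when $\ord(B)=2$ and $-E_2=B^3$ when $\ord(B)=6$), the first-slot sign is again absorbed, and a short computation with $Q=E_2$, $\x=\y=0$ produces admissible $P$. The essential new point is the non-isomorphism $M(E_2,-B;m,n)\not\cong M(-E_2,-B;m,n)$, which I would prove by an invariant rather than by Corollary~\ref{g=1} directly. The monodromy image of $M(A,B;\cdot)$ over $T^2$ is the subgroup $\langle A,B\rangle\leq SL(2;\Z)$, and the property ``$-E_2\in\langle A,B\rangle$'' is preserved under conjugation by $Q$ (as $-E_2$ is central) and under the mapping-class action $\rho\mapsto\rho\circ f_{\ast }$ (as $f_{\ast }$ is an automorphism, leaving the image unchanged). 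By Proposition~\ref{isom} and Remark~\ref{rem:monodromy}, isomorphic bundles have monodromies agreeing up to exactly these two operations. For the first three bundles the image contains $-E_2$ (indeed it equals $\langle B\rangle$, using $-E_2\in\langle B\rangle$ and a computation such as $\langle B^3,B^4\rangle=\langle B\rangle$ when $\ord(B)=6$); for $M(E_2,-B;m,n)$ the image is $\langle -B\rangle$, which is trivial when $\ord(B)=2$ and cyclic of order $3$ when $\ord(B)=6$, and in neither case contains $-E_2$. Hence $M(E_2,-B;m,n)$ is not isomorphic to the other three.

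I expect the main obstacle to be the non-isomorphism rather than the isomorphisms: the latter reduce to a finite, mechanical search for $P,Q$ realizing the required change of monodromy, whereas the former demands isolating an invariant genuinely insensitive to the full equivalence (conjugation together with the $\mathcal{M}_1$-action) and confirming that it separates $M(E_2,-B)$ from the rest. The delicate bookkeeping is precisely the verification that $\langle -B\rangle$ omits $-E_2$ exactly for orders $2$ and $6$ while containing it for order $4$, and this is what makes the two halves of the proposition genuinely distinct.
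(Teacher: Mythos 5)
Your proposal is correct and follows essentially the same route as the paper's proof: the isomorphisms are established via Corollary~\ref{g=1} with $Q=E_2$ and $\x=\y=\zv$, reducing everything to an explicit choice of $P\in SL(2;\Z)$ realizing the sign changes through $-E_2=B^2$, $B^3$, or $B$ according to the order, and the non-isomorphism is detected by the monodromy group, exactly as in the paper (``$\left<-B\right>\ne\left<B\right>$ since $-E_2\in\left<B\right>$ but $-E_2\not\in\left<-B\right>$''). Your only addition is to spell out why ``$-E_2\in\Im(\widetilde{\rho})$'' is invariant under conjugation (centrality of $-E_2$) and the $\mathcal{M}_1$-action (image unchanged), a point the paper leaves implicit in the phrase ``different monodromy groups.''
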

\begin{proof}
In each case, we will construct a bundle isomorphism using Corollary~\ref{g=1}. 
Since $(m, n)=(m', n')$, we may assume that $Q=E_2$ and $\x=\y={\bf 0}$. 
Thus, what we only have to do is to find out an appropriate $P\in SL(2; \Z)$ for each case. 
\begin{enumerate}
\item
The case where $\mathrm{ord} (B)=4$. 
Since we have $B^2=-E_2$, 
an isomorphism $$M(E_2, B; m,n)\cong M(-E_2, B; m,n)$$ is obtained 
by setting $P=\begin{pmatrix}1&0\\2&1\end{pmatrix}$. 
Similarly, setting $P=\begin{pmatrix}-1&-1\\4&3\end{pmatrix}, \begin{pmatrix}1&1\\2&3\end{pmatrix}$, 
we obtain isomorphisms 
$$M(E_2, B; m,n)\cong M(E_2, -B; m,n), \;\; M(E_2, B; m,n)\cong M(-E_2, -B; m,n),$$ respectively. 
\item
The case where $\mathrm{ord} (B)=6$. 
Since we have $B^3=-E_2$, 
we obtain isomorphisms 
$$M(E_2, B; m,n)\cong M(-E_2, B; m,n), \;\; M(E_2, B; m,n)\cong M(-E_2, -B; m,n)$$ 
by setting $P=\begin{pmatrix}1&0\\3&1\end{pmatrix}, \begin{pmatrix}1&1\\3&4\end{pmatrix}$, respectively. 
On the other hand, since $-E_2\not \in \left< -B\right>$ and $-E_2\in \left<B\right>$, we have 
$$\left< E_2, -B\right>=\left< -B\right>\ne \left<B\right>=\left<E_2, B\right>.$$
Then it follows that \[ M(E_2, B; m,n)\not \cong M(E_2, -B; m,n), \] 
since their monodromy groups are different.  
\item
The case where $\mathrm{ord} (B)=2$, that is, $B=-E_2$. 
In this case, we obtain isomorphisms 
$$M(E_2, -E_2; m,n)\cong M(-E_2, -E_2; m,n), \; M(E_2, -E_2; m,n)\cong M(-E_2, E_2; m,n)$$
by setting $P=\begin{pmatrix}1&0\\1&1\end{pmatrix}, \begin{pmatrix}0&-1\\1&0\end{pmatrix}$, respectively. On the other hand, the two bundles $M(E_2, -E_2; m,n)$ and $M(E_2, E_2; m,n)$ are not isomorphic to each other, since they have different monodromy groups. 
\end{enumerate}
\end{proof}

By using this proposition, we prove the theorem of Sakamoto and Fukuhara. 

\begin{proof}[{\bf Proof of Theorem~\ref{SF}}]
The former claim of Theorem~\ref{SF} follows by putting $g=1$ in Theorem~\ref{thm:pre-classification}. 

We prove the latter claim by using Corollary~\ref{g=1}. 
First we deal with the cases (2) and (3), 
but it suffices to argue about (2) since (3) can be replaced by (2) by exchanging the roles of $B$ and $C$. 
The images of the monodromies of $M(E_2, B; m, n)$ and $M(-E_2, C; k, l)$ are 
$\left<B\right>$ and $-E_2\in \left<-E_2, C\right>$, respectively. 
Hence, it is necessary for the two bundles to be isomorphic 
that these subgroups of $SL(2; \Z)$ coincide up to conjugation. 
In particular, $\left<B\right>$ must contain $-E_2$. 
Therefore, the order of $B$ should be $2$, $4$ or $6$. 
Moreover, there exists $Q\in SL(2;\Z)$ such that $QCQ^{-1}=\pm B^{\pm 1}$, 
since $C$ must coincide with a power of $B$ up to conjugation. 
Applying Proposition~\ref{finite order}, we can find an appropriate $P\in SL(2;\Z)$ 
for each case so that the monodromies of the two bundles coincide. 
Finally, we consider about the Euler class. 
Since the Euler class of $M(E_2, B; m, n)$ belongs to the $\Z$-module 
$$H^2\left(\Sigma _g; \{ \pi _1(T^2) \}\right)\cong \Z^2/\left< (B-E_2)\e_1, (B-E_2)\e _2\right>, $$
the condition that the Euler classes of the two bundles coincide can be written as 
\[ \begin{pmatrix}m\\n\end{pmatrix}-Q\begin{pmatrix}k\\l\end{pmatrix} \in \left< (B-E_2)\e_1, (B-E_2)\e _2\right>=\{ (B-E_2)\x \mid \x\in \Z^2 \}. \] 
This completes the proof of the cases (2) and (3). \par 

What is left to us is to deal with the cases (1) and (4), but they are rather simple.  
If the monodromies of the two bundles coincide up to conjugation, 
then there exists $Q\in SL(2;\Z)$ satisfying $QCQ^{-1}=B^{\pm 1}$ in the case of (1), 
and $QCQ^{-1}=\pm B^{\pm 1}$ in the case of (4). 
Conversely, in all these cases, we can easily find $P\in SL(2;\Z)$ 
that makes the monodromies of the two bundles coincide. 
Finally, the Euler class of the $T^2$-bundle belongs to the $\Z$-module 
\[ H^2\left(\Sigma _g; \{ \pi _1(T^2) \}\right)\cong \Z^2/\left< (B-E_2)\e_1, (B-E_2)\e _2\right> \] 
in the case of (1), and to 
\[ H^2\left(\Sigma _g; \{ \pi _1(T^2) \}\right)\cong \Z^2/\left< 2\e_1, 2\e_2, (B-E_2)\e_1, (B-E_2)\e _2\right> \]
in the case of (4). 
Then we can write down the condition that the Euler classes of the two bundles coincide 
as that on integers $k$, $l$, $m$, $n$. 
Thus we have proven the latter claim. 
\end{proof}

\subsection{Main Theorem (the case $g\geq 2$)}

When $g\geq 2$, the classification of monodromies becomes more complicated 
since the fundamental group $\pi _1(\Sigma _g)$ is non-commutative. 
Based on Theorem~\ref{hom-psl}, however, 
$\mathcal{M}_g$-orbits of $\mathrm{Hom}(\pi_1(\Sigma _g), SL(2;\Z))$ can be classified (Theorem~\ref{hom-sl}), and then, we obtain the main theorem (Theorem~\ref{main thm}). 
Before going into the detail of the arguments, we first prepare some propositions needed later. 

\vspace{8pt}

\begin{prop}~\label{key prop}
For any $B, C\in SL(2;\Z)$, the following isomorphism holds; 
$$M(-E_2, B, E_2, C)\cong M(-E_2, B, E_2, -C).$$
\end{prop}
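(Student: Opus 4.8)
The plan is to apply Proposition~\ref{isom}. Both bundles are $SL(2;\Z)$-bundles over $\Sigma _2$ (their Euler classes are represented by $(0,0)$, hence vanish), so it suffices to produce a mapping class $f\in \mathcal{M}_2$ with $\rho \circ f_{\ast }=\rho '$, where $\rho $ and $\rho '$ denote the monodromies of $M(-E_2,B,E_2,C)$ and $M(-E_2,B,E_2,-C)$; one then takes $Q=E_2$ in Proposition~\ref{isom}, the Euler-class condition $e(\xi )=f^{\ast }e(\xi ')$ being automatic since both classes are zero. Explicitly, $\rho $ is determined by $\rho (\alpha _1)=-E_2$, $\rho (\beta _1)=B$, $\rho (\alpha _2)=E_2$, $\rho (\beta _2)=C$, while $\rho '$ differs only in $\rho '(\beta _2)=-C$.

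First I would fix the simple closed curve $\gamma $ obtained as the band sum of the disjoint meridians $\alpha _1$ and $\alpha _2$ along an embedded arc joining the two handles. Then $\gamma $ is freely homotopic to $\alpha _1\alpha _2$, so $\rho (\gamma )=\rho (\alpha _1)\rho (\alpha _2)=-E_2$; moreover $\gamma $ is disjoint from $\alpha _1$ and $\alpha _2$ and meets each of $\beta _1$ and $\beta _2$ exactly once. The conceptual point is that the central involution $-E_2$ lets us transport a sign from the first handle onto $\beta _2$, which is impossible by conjugation alone (indeed $-C$ is generally not conjugate to $C^{\pm 1}$ since $\operatorname{tr}(-C)=-\operatorname{tr}(C)$).

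Next I would compute the effect of the Dehn twist $t_{\gamma }$ on the monodromy. Since $t_{\gamma }$ fixes every curve disjoint from $\gamma $ and inserts one copy of $\gamma $ into each curve crossing it once, we get $\rho \circ (t_{\gamma })_{\ast }(\alpha _i)=\rho (\alpha _i)$ and $\rho \circ (t_{\gamma })_{\ast }(\beta _i)=\rho (\gamma )^{\pm 1}\rho (\beta _i)=-\rho (\beta _i)$ for $i=1,2$; the sign ambiguity coming from the twisting convention is harmless because $\rho (\gamma )=-E_2$ is an involution. Thus $t_{\gamma }$ realizes $(\beta _1,\beta _2)\mapsto (-B,-C)$ at the level of $\rho $, fixing the $\alpha _i$. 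To repair the unwanted sign on $\beta _1$, I would then apply the Dehn twist $t_{\alpha _1}$, which meets $\beta _1$ once and is disjoint from $\alpha _1,\alpha _2,\beta _2$ (and from $\gamma $): it multiplies the $\rho $-image of $\beta _1$ by $\rho (\alpha _1)=-E_2$ and leaves the images of the other three generators unchanged.

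Setting $f=t_{\alpha _1}\circ t_{\gamma }$, a direct bookkeeping gives $\rho \circ f_{\ast }(\alpha _1)=-E_2$, $\rho \circ f_{\ast }(\beta _1)=(-E_2)(-B)=B$, $\rho \circ f_{\ast }(\alpha _2)=E_2$ and $\rho \circ f_{\ast }(\beta _2)=-C$, that is, $\rho \circ f_{\ast }=\rho '$. Since $f$ is a genuine element of $\mathcal{M}_2$ (a composition of Dehn twists), no separate check that $f_{\ast }$ preserves the surface relation is needed; this is exactly why I would assemble $f$ from explicit twists rather than guess an abstract automorphism of $\pi _1(\Sigma _2)$. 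Proposition~\ref{isom} then yields the desired isomorphism. I expect the only genuinely delicate step to be the first one, namely producing a \emph{simple} closed curve whose monodromy is $-E_2$ and which meets $\beta _2$ an odd number of times; the band-sum curve $\gamma $ does the job, and the extra crossing it necessarily has with $\beta _1$ is exactly what $t_{\alpha _1}$ is there to undo.
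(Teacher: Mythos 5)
Your proposal is correct, but it takes a genuinely different route from the paper. The paper does not produce a mapping class directly: it constructs an explicit embedding $\phi \colon T^2\setminus D^2 \hookrightarrow \Sigma _2$ with $\phi _{\ast }(\alpha )=[\tilde{\alpha }_1, \tilde{\beta }_1]\tilde{\alpha }_2$ and $\phi _{\ast }(\beta )=\tilde{\beta }_2\tilde{\alpha }_1^{-1}$, computes that the monodromy restricted to this punctured torus is $(E_2, -C)$ while on the complementary punctured torus (generated by $\tilde{\alpha }_1$ and $\tilde{\alpha }_2^{-1}\tilde{\beta }_1$) it is $(-E_2, B)$, and then splits $\Sigma _2$ along the separating curve $\phi (\partial (T^2\setminus D^2))$ to exhibit the bundle as the fiber connected sum $M(-E_2, B)\#_f M(E_2, -C)\cong M(-E_2,B,E_2,-C)$. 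Your argument instead stays inside Proposition~\ref{isom} and realizes the change of monodromy by the explicit mapping class $f=t_{\alpha _1}\circ t_{\gamma }$ with $\gamma $ freely homotopic to $\alpha _1\alpha _2$. The two proofs exploit the same mechanism: in the paper the new longitude $\tilde{\beta }_2\tilde{\alpha }_1^{-1}$ passes through the first handle and so absorbs the central element $-E_2=\widetilde{\rho }(\tilde{\alpha }_1)$; in yours the twist curve $\gamma $ carries $\rho (\gamma )=-E_2$ through both handles. Your bookkeeping is sound precisely for the reason you flag: the induced maps on $\pi _1$ are a priori only determined up to basepoint conjugation and twisting conventions, but every inserted loop has central $\rho $-image $-E_2$, so all ambiguities (signs, conjugating words) collapse and the equality $\rho \circ f_{\ast }=\rho '$ is exact, which is what lets you take $Q=E_2$; the Euler-class condition is indeed vacuous since both bundles are $SL(2;\Z)$-bundles with class represented by $(0,0)$. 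The only unproved geometric input on your side is that the band-sum curve can be chosen disjoint from $\alpha _1, \alpha _2$ and meeting each $\beta _i$ exactly once, which is true in the standard genus-$2$ picture but deserves a figure or a word of justification. What each approach buys: the paper's version yields slightly more than the stated isomorphism, namely the explicit fiber connected sum re-decomposition $M(-E_2,B)\#_f M(E_2,-C)$, which is exactly the form in which the proposition is invoked later (in the proof of Proposition~\ref{cases}\,(1)); your version produces a concrete element of $\mathcal{M}_2$ and transparently generalizes --- the same twist $t_{\gamma }$ with $\gamma \simeq \alpha _i\alpha _j$ transports a sign from any handle with $\ve _i=-1$ to any other longitude, without re-running the embedding argument.
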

\begin{proof}
Let $f\colon \Sigma_2\to \Sigma_2$ be the product of the right-handed Dehn twists about $a$ and $c$ and the left-handed Dehn twist about $b$, where $a$, $b$ and $c$ are the simple closed curves depicted in Figure~\ref{fig:curves}. 
The homomorphism $f_{\ast }\colon \pi _1(\Sigma _2)\to \pi _1(\Sigma _2)$ satisfies 
$$f_{\ast }(\alpha _1)=\alpha _1, \; f_{\ast }(\beta _1)=\alpha _1^{-1}\alpha _2^{-1}\beta _1\alpha _1, \; 
f_{\ast }(\alpha _2)=\alpha _2, \; f_{\ast }(\beta _2)=\alpha _2^{-1}\alpha _1^{-1}\beta _2\alpha _2. $$
\begin{figure}
\begin{center}
\includegraphics[width=7cm]{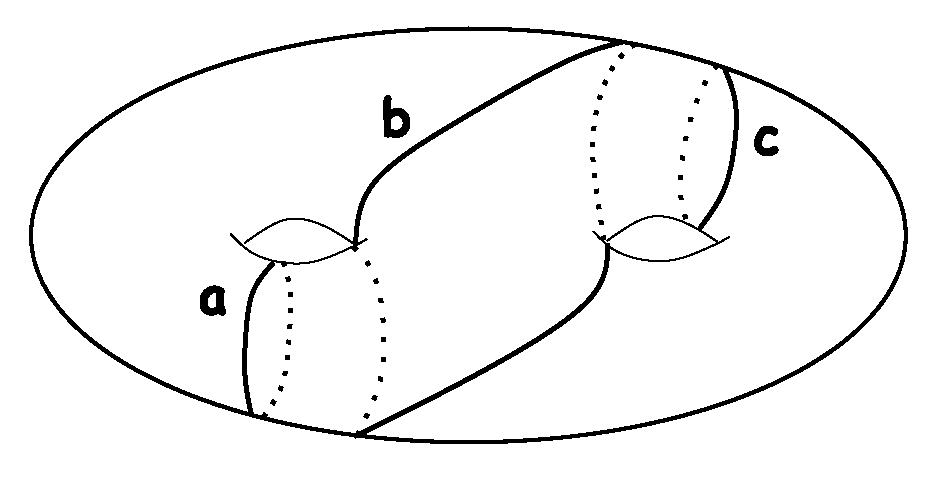}
\caption{Simple closed curves $a$, $b$ and $c$}
\label{fig:curves}
\end{center}
\end{figure}
Hence we have 
$$(\tilde{\rho} \circ f_{\ast})(\alpha _1)=-E_2, \; (\tilde{\rho} \circ f_{\ast})(\beta _1)=B, \; 
(\tilde{\rho} \circ f_{\ast})(\alpha _2)=E_2, \; (\tilde{\rho} \circ f_{\ast})(\beta _2)=-C.$$
Therefore, pulling back the $T^2$-bundle $\pi \colon M(-E_2, B, E_2, C)\to \Sigma _2$ by $f$, 
we obtain the $T^2$-bundle $f^{\ast }\pi \colon M(-E_2, B, E_2, -C)\to \Sigma _2$. 
Since $f$ is a self-diffeomorphism of $\Sigma _2$, we have an isomorphism $f^{\ast }\pi \cong \pi $.  
\end{proof}

For any homomorphism $\rho \colon \pi_1(\Sigma _g)\to PSL(2;\Z)$, there are $2^{2g}$ choices of taking a lift $\widetilde{\rho }\colon \pi_1(\Sigma _g)\to SL(2;\Z)$ of $\rho $ with respect to the projection $p\colon SL(2;\Z)\to PSL(2;\Z)$. We want to determine which two lifts are transformed to each other by the action of the mapping class group $\mathcal{M}_g$. 
From now on, we suppose that $\rho $ is of the normal form in the sense of Theorem~\ref{hom-psl}. 
Put $\widetilde{\rho}(\alpha _i)=A_i$ and $\widetilde{\rho}(\beta _i)=B_i$. 
Then we have $A_i=E_2$ or $A_i=-E_2$ for each $i$ with $1\leq i\leq g$, so this can be described as $A_i=\ve _iE_2$, where $\ve _i$ is equal to either $+1$ or $-1$ for each $i$. 

\vspace{8pt}

\begin{prop}~\label{2,4,6}
The monodromy group of $M(\ve _1E_2, B_1, \ldots , \ve _gE_2, B_g)$ does not contain $-E_2$ 
if and only if $\ve _i=1$ and the order of $B_i$ is neither $2$, $4$ nor $6$ for each $i$ with $1\leq i\leq  g$.
\end{prop}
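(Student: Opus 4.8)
The monodromy group in question is $\Im(\widetilde{\rho})=\langle \ve_1E_2,B_1,\ldots,\ve_gE_2,B_g\rangle$, generated by the images of the canonical generators under the lift $\widetilde{\rho}$ with $\widetilde{\rho}(\alpha_i)=\ve_iE_2$ and $\widetilde{\rho}(\beta_i)=B_i$. The plan is to treat the two implications separately and to dispose of the ``only if'' direction by its contrapositive. Suppose some $\ve_i=-1$ or some $\ord(B_i)\in\{2,4,6\}$. In the first case $-E_2=\widetilde{\rho}(\alpha_i)\in\Im(\widetilde{\rho})$ outright; in the second case $\langle B_i\rangle\ni -E_2$ by the fact recalled just before Proposition~\ref{finite order}, and since $B_i=\widetilde{\rho}(\beta_i)\in\Im(\widetilde{\rho})$ we again obtain $-E_2\in\Im(\widetilde{\rho})$. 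This is exactly the contrapositive of the forward direction, and it needs no further work.

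For the ``if'' direction I would assume every $\ve_i=1$ and every $\ord(B_i)\notin\{2,4,6\}$, so that $\Im(\widetilde{\rho})=\langle B_1,\ldots,B_g\rangle$, and first convert this numerical hypothesis into structural data via the normal form of $\rho=p\circ\widetilde{\rho}$ (Theorem~\ref{hom-psl}). Since $p(B_i)=\rho(\beta_i)$ generates $H_i$, the element $p(B_i)$ has order $\infty,3,2,1$ according as $1\le i\le k$, $k<i\le l$, $l<i\le m$, or $i>m$. Reading off the lifts: on a $\Z_3$-factor $B_i^3\in\{\pm E_2\}$, so $\ord(B_i)\in\{3,6\}$ and the hypothesis forces $B_i^3=E_2$; on a $\Z_2$-factor $B_i^2\in\{\pm E_2\}$, and as the only square roots of $E_2$ in $SL(2;\Z)$ are $\pm E_2$ (which project to $e$) we would need $B_i^2=-E_2$, i.e. $\ord(B_i)=4$, contradicting the hypothesis. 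Hence there are no $\Z_2$-factors, $m=l$, while $B_i=E_2$ for $i>m$ and $B_i$ has infinite order for $1\le i\le k$.

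The key step is then to construct a section of $p$ over $\Im(\rho)$. Because $\Im(\rho)=H_1\ast\cdots\ast H_l$ is a free product, its universal property yields a unique homomorphism $s\colon\Im(\rho)\to SL(2;\Z)$ sending the generator $p(B_i)$ of each $H_i$ to $B_i$; this is well-defined precisely because the defining relation of each factor is respected---$B_i$ has infinite order on a $\Z$-factor and $B_i^3=E_2$ on a $\Z_3$-factor, and there are no $\Z_2$-factors to obstruct it. By construction $p\circ s=\mathrm{id}_{\Im(\rho)}$ and $s(\Im(\rho))=\langle B_1,\ldots,B_l\rangle=\Im(\widetilde{\rho})$, so $s$ is a bijection of $\Im(\rho)$ onto $\Im(\widetilde{\rho})$ whose inverse is $p|_{\Im(\widetilde{\rho})}$. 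In particular $p|_{\Im(\widetilde{\rho})}$ is injective, its kernel $\Im(\widetilde{\rho})\cap\{\pm E_2\}$ is trivial, and therefore $-E_2\notin\Im(\widetilde{\rho})$, as required.

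The main obstacle I anticipate is the well-definedness of $s$ in the previous paragraph, which is exactly where the asymmetry between the factor types becomes decisive. The underlying phenomenon is that an order-$2$ element of $PSL(2;\Z)$ admits no order-$2$ lift in $SL(2;\Z)$---every lift squares to $-E_2$---so a single $\Z_2$-factor already pulls $-E_2$ into the monodromy group, whereas an order-$3$ element does lift to an order-$3$ element, allowing the section to exist as soon as no $\Z_2$-factor occurs. Verifying this dichotomy carefully, together with the elementary fact that $\pm E_2$ are the only square roots of $E_2$ in $SL(2;\Z)$, is the crux of the argument.
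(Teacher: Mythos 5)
Your proof is correct, and in the ``if'' direction it takes a genuinely different route from the paper. The paper argues by contradiction at the level of words: it supposes $-E_2=B_{\sigma_1}^{n_1}\cdots B_{\sigma_r}^{n_r}$, projects to $PSL(2;\Z)$, and invokes the free product decomposition $\Im(\rho)=H_1\ast\cdots\ast H_m$ from the normal form to conclude that all exponents vanish, whence $-E_2=E_2$. You instead construct a splitting: a homomorphism $s\colon\Im(\rho)\to SL(2;\Z)$ with $p\circ s=\mathrm{id}$ via the universal property of the free product, which makes $p$ injective on $\Im(\widetilde{\rho})$ and kills $-E_2$ without any word computation. Your version has a concrete advantage in rigor: the paper's claim that $n_1=\cdots=n_r=0$ is, as literally stated, too strong for the $\Z_3$-factors (a syllable $p(B_{\sigma_i})^{n_i}$ with $3\mid n_i$ is also trivial), so the paper's argument tacitly needs exactly the order-matching facts you verify explicitly --- that $\ord(B_i)\ne 6$ forces $B_i^3=E_2$ on $\Z_3$-factors, and that an order-$2$ element of $PSL(2;\Z)$ lifts only to order-$4$ elements of $SL(2;\Z)$ (the only square roots of $E_2$ being $\pm E_2$), so that the hypothesis rules out $\Z_2$-factors entirely and $m=l$; the paper leaves both points implicit. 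These are precisely the well-definedness conditions for your section $s$, so your approach packages the hidden content of the paper's one-line deduction into a checkable universal-property verification, at the cost of being somewhat longer than the paper's direct normal-form argument. Two tiny remarks: on a $\Z$-factor, well-definedness of $s$ requires nothing of $B_i$ (any image of a free generator works), so your appeal to the infinite order of $B_i$ there is unnecessary, though harmless; and your ``only if'' direction by contrapositive is exactly the paper's ``obvious'' part, using the fact recalled before Proposition~\ref{finite order} that $-E_2\in\langle B\rangle$ precisely when $\ord(B)\in\{2,4,6\}$.
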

\begin{proof}
The only if part is obvious. 
On the other hand, the if part follows from the condition that $\rho $ is of the normal form. 
The proof is as follows. 
Since $\ord (B_i)\ne 2$, we have $B_i=E_2$ for each $i$ with $m+1\leq i\leq g$. 
Now suppose that the monodromy group $\mathrm{Im}(\widetilde{\rho })$ contains $-E_2$. 
Then we can write $-E_2=B_{\sigma_1}^{n_1}B_{\sigma_2}^{n_2}\cdots B_{\sigma_r}^{n_r}$, 
where $r$ is a positive integer and $\sigma _i \in \{1, 2, \ldots ,m\}$ for each $i$. 
Then we have 
$$p (B_{\sigma_1})^{n_1}p (B_{\sigma_2})^{n_2}\cdots p (B_{\sigma_r})^{n_r}=e\in \mathrm{Im}(\rho )\subset PSL(2;\Z).$$
Since each $p (B_i)$ ($1\leq i\leq m$) is a generator of the factor $H_i$ of the free product decomposition of $\mathrm{Im}(\rho )$, it follows that $n_1=n_2=\cdots =n_r=0$. 
Thus we have $E_2=-E_2$, which is a contradiction. 
Therefore, $\mathrm{Im}(\widetilde{\rho })$ does not contain $-E_2$. 
\end{proof}

We note that the monodromy group $\mathrm{Im}(\widetilde{\rho })$ contains an element of order $4$ 
if and only if $l<m$, since the condition that $B\in SL(2;\Z)$ is of order $4$ 
is equivalent to the one that $p (B)\in PSL(2;\Z)$ is of order $2$. 
Moreover, if $l<m$, then we have $\ord (B_i)=4$ for any $i$ with $l+1\leq i\leq m$. 
Then, applying Proposition~\ref{finite order} and using the isomorphisms 
$$M(E_2, B_i)\cong M(-E_2, B_i)\cong M(E_2, -B_i)\cong M(-E_2, -B_i),$$ 
we can freely switch $\ve _i$ and the sign of $B_i$ 
without changing the isomorphism class of the $T^2$-bundle $M(\ve _1E_2, B_1, \ldots , \ve _gE_2, B_g)$. 
Similarly, we can exclude the elements of order $2$ and $6$ from $B_1, \ldots , B_g$ preserving the isomorphism class of the $T^2$-bundle. 
For, if $B_i$ is of order $2$ or $6$, then $B_i$ can be replaced by $-B_i$ (of order $1$ or $3$) by using the bundle isomorphisms $$M(E_2, B_i)\cong M(-E_2, -B_i), \; M(-E_2, B_i)\cong M(-E_2, -B_i).$$
Therefore, we don't have to deal with all the $2^{2g}$ possibilities of lifts of $\rho $, 
but only those satisfying the following three conditions; 
\begin{enumerate}
\item[(a)]
if $k+1\leq i\leq l$, then $\ord(B_i)=3$,  
\item[(b)]
if $l+1\leq i\leq m$, then $\ve _i=-1$, and 
\item[(c)]
if $m+1\leq i\leq g$, then $B_i=E_2$. 
\end{enumerate}
In what follows, we assume that 
the monodromy $\widetilde{\rho }$ of $M(\ve _1E_2, B_1, \ldots , \ve_gE_2, B_g)$ satisfies these conditions. 

\vspace{8pt}

\begin{prop}~\label{cases}
The following assertions hold. 
\begin{enumerate}
\item
When the monodromy group of $M(\ve _1E_2, B_1, \ldots , \ve _gE_2, B_g)$ contains $-E_2$, all the $2^g$ $T^2$-bundles $M(\ve _1E_2, \pm B_1, \ldots , \ve _gE_2, \pm B_g)$ are isomorphic to each other. 
\item
When the monodromy group of $M(E_2, B_1, \ldots , E_2, B_g)$ does not contain $-E_2$, 
the $2^k$ $T^2$-bundles $M(E_2, \pm B_1, \ldots , E_2, \pm B_k, E_2, B_{k+1}, \ldots, E_2, B_g)$ 
are pairwise non-isomorphic. 
\end{enumerate}
\end{prop}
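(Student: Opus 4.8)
The plan is to reduce both parts, via Proposition~\ref{isom}, to statements about the monodromy homomorphisms alone. All bundles here are $SL(2;\Z)$-bundles, so their Euler classes vanish and match automatically; hence two of them are isomorphic if and only if the corresponding lifts of $\rho$ lie in a single orbit of the $\mathcal{M}_g$-action combined with $SL(2;\Z)$-conjugation. I record the key structural fact: since $\rho$ is in normal form, the projection $p$ restricts to an isomorphism $\Im(\widetilde\rho)\xrightarrow{\sim}\overline\Gamma:=\Im(\rho)$ precisely when $-E_2\notin\Im(\widetilde\rho)$. In that case each lift is a section $s\colon\overline\Gamma\to SL(2;\Z)$ of $p$, two lifts differ by a homomorphism $\chi\colon\overline\Gamma\to\{\pm E_2\}\cong\Z_2$, and because the $\Z_3$-factors carry no nontrivial map to $\Z_2$ these characters are parametrized by $\Z_2^k$, matching the $2^k$ sign choices on $B_1,\ldots,B_k$. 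I index lifts by the sign pattern $\zeta$ and write $\widetilde\rho_\zeta$, $\Gamma_\zeta:=\Im(\widetilde\rho_\zeta)$.

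For part (1) the strategy is to flip the sign of each $B_j$ individually, keeping every $\ve_i$ fixed; composing these flips identifies all $2^g$ bundles. When $\ve_j=-1$ I would use the genus-one isomorphism $M(-E_2,B_j)\cong M(-E_2,-B_j)$, valid for every $B_j$ by Corollary~\ref{g=1} (case (4) of Theorem~\ref{SF}, taking $Q=E_2$, $\x=\y=\zv$ and $P=\pmt{1&1\\0&1}$), and transport it to $\Sigma_g$ by extending the realizing mapping class of $T^2$ to a mapping class of $\Sigma_g$ supported on the $j$-th handle, leaving the other factors untouched. When $\ve_j=+1$ I would instead invoke Proposition~\ref{key prop}: since $-E_2\in\Im(\widetilde\rho)$, Proposition~\ref{2,4,6} together with the normal form guarantees some index $i_0$ with $\ve_{i_0}=-1$, so after reordering the fiber summands into the adjacent shape $M(-E_2,B_{i_0},E_2,B_j)$ one flips $B_j$ to $-B_j$ directly. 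Both moves are realized by a mapping class and a conjugation, so Proposition~\ref{isom} finishes part (1).

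For part (2) I am in the case $-E_2\notin\Im(\widetilde\rho)$ (all $\ve_i=1$, and since no $B_i$ has order $2,4,6$ the normal form forces $m=l$), and I must show the $2^k$ lifts are pairwise inequivalent. A first obstruction is that an isomorphism forces the images $\Gamma_\zeta$ to be conjugate in $SL(2;\Z)$; since $-B_j\notin\Gamma_\zeta$ whenever $B_j\in\Gamma_\zeta$, distinct patterns already yield distinct subgroups, and the flip is detected by $\mathrm{tr}(-B_j)=-\mathrm{tr}(B_j)\ne\mathrm{tr}(B_j)$ (nonzero as $B_j$ has infinite order). To upgrade conjugacy of images to genuine inequivalence I would translate the equality $\widetilde\rho_\zeta\circ f_*=Q\widetilde\rho_{\zeta'}Q^{-1}$ into the language of sections: projecting by $p$ and using the rigidity of the normal form (Theorem~\ref{hom-psl}), any such $(f,Q)$ produces an element $\overline Q$ of the normalizer of $\overline\Gamma$ in $PSL(2;\Z)$, and the relation becomes an identity of the form $\chi_\zeta\circ c_{\overline Q}=\chi_{\zeta'}\cdot\omega_{\overline Q}$ in $\Hom(\overline\Gamma,\Z_2)$, where $c_{\overline Q}$ is conjugation by $\overline Q$ and $\omega_{\overline Q}$ is defined by $Q\,s_+(\bar x)\,Q^{-1}=\omega_{\overline Q}(\bar x)\,s_+(c_{\overline Q}\bar x)$. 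The aim is then to show this affine action of the normalizer on $\Hom(\overline\Gamma,\Z_2)\cong\Z_2^k$ is trivial, so that the $2^k$ characters occupy $2^k$ distinct orbits.

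The hardest step will be exactly this control of the normalizer of the Kurosh subgroup $\overline\Gamma\le\Z_2\ast\Z_3$. I expect to exploit that the free factors of a subgroup of a free product are malnormal: for $\overline Q\in\overline\Gamma$ the automorphism $c_{\overline Q}$ is inner, hence acts trivially on the abelian quotient, and $\omega_{\overline Q}$ vanishes because $s_+$ is an honest homomorphism. The delicate remaining point is whether two distinct $\Z$-free-factors can be interchanged by an outer normalizing symmetry, which is the only way the linear part $c_{\overline Q}^{\,*}$ could act nontrivially on $\Z_2^k$; I would rule this out by comparing the conjugacy classes of the lifted generators through their traces, which are preserved by conjugation but reversed in sign by the forbidden flips, thereby forcing the action to be trivial and completing part (2).
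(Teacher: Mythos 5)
Your part (1) is correct and is essentially the paper's own argument: you extract an index $i_0$ with $\ve_{i_0}=-1$ from Proposition~\ref{2,4,6} together with the standing conventions (a)--(c), flip $B_j$ next to $\ve_j=-1$ via $M(-E_2,B_j)\cong M(-E_2,-B_j)$, and otherwise flip via Proposition~\ref{key prop} after a handle permutation. Your explicit Corollary~\ref{g=1} computation (with $Q=E_2$, $P=\pmt{1&1\\0&1}$) is in fact a little more careful than the paper, whose citation of Proposition~\ref{finite order} literally covers only finite-order $B$; the general case is exactly your computation. Likewise, the first paragraph of your part (2) --- since $-E_2\notin H$, the group $H$ contains exactly one of $\pm B_i$, so distinct sign patterns yield distinct subgroups --- is the \emph{entire} proof in the paper. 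That argument suffices for the way the proposition is actually used (Theorem~\ref{hom-sl}): there one distinguishes $\mathcal{M}_g$-orbits of lifts of a \emph{fixed} $\rho$, and $\widetilde\rho\circ f_{\ast}$ has literally the same image as $\widetilde\rho$, so no conjugation enters and equality of monodromy subgroups is the right invariant. Conjugation by $Q\in SL(2;\Z)$ is accounted for separately, in condition (1) of Theorem~\ref{main thm}.

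The genuine gap is the remainder of your part (2). Your program --- show that the affine action of the normalizer of $\overline\Gamma$ on $\Hom(\overline\Gamma,\Z_2)\cong\Z_2^k$ is trivial, ruling out factor swaps by traces of lifted generators --- cannot be completed, because the action can be nontrivial and traces do not see it. Take $g=k=2$, $B_1=\pmt{1&2\\0&1}$, $A=\pmt{0&1\\-1&0}$, $B_2=AB_1A^{-1}=\pmt{1&0\\-2&1}$. By Sanov's theorem $\langle B_1,B_2\rangle$ is free of rank two (so $-E_2$ lies in none of the groups $\langle \pm B_1,\pm B_2\rangle$), the image $\bar A\in PSL(2;\Z)$ normalizes $\overline\Gamma=\langle\bar B_1\rangle\ast\langle\bar B_2\rangle$ and swaps the two $\Z$-factors, and $\mathrm{tr}(B_1)=\mathrm{tr}(B_2)=2$, so your trace comparison detects nothing: the conjugation sends $B_1\mapsto B_2$ and $-B_2\mapsto -B_1$ (using that $A^2=-E_2$ is central), preserving all traces. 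Indeed, at the bundle level two of the four sign patterns then merge: swapping the two handles and then changing the fiber identification by $A$ (Remark~\ref{rem:monodromy}, Proposition~\ref{isom} with trivial Euler data) gives
\begin{align*}
M(E_2,B_1,E_2,-B_2)&\cong M(E_2,-B_2,E_2,B_1)\\
&\cong M\bigl(E_2,-AB_2A^{-1},E_2,AB_1A^{-1}\bigr)=M(E_2,-B_1,E_2,B_2).
\end{align*}
So the strengthened statement you set out to prove --- pairwise non-isomorphism as bundles, with $SL(2;\Z)$-conjugation allowed --- actually fails for such special tuples $(B_1,\ldots,B_k)$; to be fair, the paper's closing inference ``different monodromy groups, hence non-isomorphic'' is exactly as quick at this point, and is sound only in the orbit reading just described. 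The repair for your write-up is therefore not to fix the normalizer analysis but to delete it: state and prove (2) as distinctness of the $2^k$ lifts under the $\mathcal{M}_g$-action (equivalently, distinctness of their monodromy subgroups), which your first paragraph already establishes and which is all that Theorem~\ref{hom-sl} and Theorem~\ref{main thm} require.
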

\begin{proof}
\begin{enumerate}
\item
By assumption, there exists $i$ with $1\leq i\leq g$ such that $\ve _i=-1$. 
For, if $\ve _i=1$ for all $i$, then by Proposition~\ref{2,4,6} and conditions (a) and (c), 
there must be $B_i$ of order $4$, and hence, we have $\ve _i=-1$ by the condition (b), 
which contradicts to the assumption that $\ve _i=1$ for all $i$. 
Hence, we can take $i$ ($1\leq i\leq g$) with $\ve _i=-1$. 
Then, it follows from the isomorphisms $M(-E_2, B_i)\cong M(-E_2, -B_i)$ and  
$$M(-E_2, B_i)\#_f M(E_2, B_j)\cong M(-E_2, B_i)\#_f M(E_2, -B_j)$$ 
(see Propositions~\ref{finite order} and~\ref{key prop}) that replacing $B_j$ by $-B_j$ ($1\leq j\leq g$) does not change the isomorphism class of $T^2$-bundle. 
\item
Let us denote the monodromy group $\widetilde{\rho } (\pi _1(\Sigma _g))\subset SL(2; \Z)$ by $H$. Since $H$ does not contain $-E_2$, it is impossible for $H$ to contain both $\pm B_i$ for any $i$ with $1\leq i\leq k$. 
Hence, by replacing $B_i$ by $-B_i$ or vice versa, we can switch whether $B_i\in H$ or $-B_i\in H$ 
preserving the condition that $H$ does not contain $-E_2$. 
Therefore, the $2^k$ $T^2$-bundles $$M(E_2, \pm B_1, \ldots , E_2, \pm B_k, E_2, B_{k+1}, \ldots, E_2, B_g)$$ 
are indeed pairwise non-isomorphic, since they have different monodromy groups. 
\end{enumerate}
\end{proof}

\vspace{8pt}

\begin{prop}~\label{infinite order}
The two $T^2$-bundles 
\begin{eqnarray*}
M(\ve_1E_2, B_1, \ldots , \ve_kE_2, B_k, \ve_{k+1}E_2, B_{k+1}, \ldots, \ve_gE_2, B_g) \\
M(\delta_1E_2, B_1, \ldots , \delta_kE_2, B_k, \delta_{k+1}E_2, B_{k+1}, \ldots, \delta_gE_2, B_g)
\end{eqnarray*}
are not isomorphic to each other if $(\ve_1, \ldots , \ve_k)\ne (\delta _1, \ldots , \delta _k)$. 
\end{prop}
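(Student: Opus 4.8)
The plan is to assume $\xi_1\cong\xi_2$ and deduce $(\ve_1,\dots,\ve_k)=(\delta_1,\dots,\delta_k)$. By Proposition~\ref{isom} an isomorphism provides $f\in\Diff_+(\Sigma_g)$ and $Q\in SL(2;\Z)$ with $\widetilde\rho\circ f_{\ast}=Q\widetilde\rho'Q^{-1}$, where $\widetilde\rho,\widetilde\rho'$ denote the two monodromies. Since $\pm E_2$ is central, conjugation by $Q$ fixes it, so evaluating on $\alpha_i$ gives $\widetilde\rho(f_{\ast}\alpha_i)=Q\widetilde\rho'(\alpha_i)Q^{-1}=\delta_iE_2$, while $\widetilde\rho(\alpha_i)=\ve_iE_2$. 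Applying the projection $p$ yields $\rho\circ f_{\ast}=\bar Q\rho\bar Q^{-1}$ with $\bar Q=p(Q)$, so that $\rho(\alpha_i)=e$ forces $f_{\ast}\alpha_i\in\ker\rho$ for every $i$. Thus I would reduce the whole statement to computing $\widetilde\rho(f_{\ast}\alpha_i)$ and showing it equals $\ve_iE_2$.

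The next observation is that on $\ker\rho=\widetilde\rho^{-1}(\{\pm E_2\})$ the map $\widetilde\rho$ takes values in the central subgroup $\{\pm E_2\}$, hence restricts to a homomorphism $\phi\colon\ker\rho\to\Z_2$ which, by centrality, is invariant under conjugation by $\pi_1(\Sigma_g)$. Because $f_{\ast}\alpha_i\cdot\alpha_i^{-1}\in\ker\rho$ and $\phi(f_{\ast}\alpha_i\cdot\alpha_i^{-1})=\delta_i\ve_iE_2$, it suffices to prove that $f_{\ast}\alpha_i$ and $\alpha_i$ lie in the same coset of $\ker\phi$. When the sign pattern changes whether $-E_2$ lies in the monodromy group, the two bundles already have non-conjugate images $\mathrm{Im}(\widetilde\rho)$ and $\mathrm{Im}(\widetilde\rho')$ and are separated exactly as in Propositions~\ref{2,4,6} and~\ref{cases}; so the essential case is the one in which both images contain $-E_2$ and therefore coincide.

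In that remaining case no single abelian invariant will do: any homomorphism from $SL(2;\Z)$ to an abelian group factors through $SL(2;\Z)^{\mathrm{ab}}\cong\Z_{12}$, kills the $Q$-conjugation, and collapses the sign data to the single class $\zeta\in H^1(\Sigma_g;\Z_2)$ with $\zeta(\alpha_i)=u_i$ and $\zeta(\beta_i)=0$, where $u_i\in\{0,1\}$ encodes $\ve_i=\pm1$; since $Sp(2g;\Z_2)$ acts transitively on the nonzero such classes, abelianization can never separate the individual signs. The key is therefore to use that $f$ does not realize an arbitrary element of $Sp(2g;\Z)$ but must satisfy $\rho\circ f_{\ast}=\bar Q\rho\bar Q^{-1}$, i.e. it stabilizes the normal form of Theorem~\ref{hom-psl} up to conjugation. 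Invoking the free-product decomposition $\mathrm{Im}(\rho)=(H_1\ast\cdots\ast H_k)\ast(\cdots)$ and the conjugacy theory of free products, such an $f$ can only carry each infinite-cyclic factor $H_i=\langle p(B_i)\rangle$ to a conjugate of a factor generator; since the $B_i$ are prescribed and the factors $H_i\cong\Z$ are torsion-free, the sign-absorbing isomorphisms $M(E_2,B)\cong M(-E_2,B)$ available for $\mathrm{ord}(B)\in\{2,4,6\}$ (Propositions~\ref{finite order} and~\ref{key prop}) are unavailable, and one concludes $f_{\ast}\alpha_i$ stays in the $\ker\phi$-coset of $\alpha_i$, forcing $\delta_i=\ve_i$.

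The hardest step is precisely this last one: controlling the action on $\ker\rho$ of the stabilizer of $\rho$ in $\mathcal{M}_g$ and proving that it fixes the sign character $\phi$ on the infinite-order factors. This is exactly where the infinite order of the $B_i$ is indispensable, as it is what rules out the factor-absorbing moves that collapse signs in the finite-order case. I would expect to carry it out by combining Proposition~\ref{chara} on the handlebody mapping class group with the Sakamoto--Fukuhara distinction (Theorem~\ref{SF}) applied to the $T^2$-bundle pieces over $T^2$ furnished by the fiber connected sum decomposition of Theorem~\ref{thm:decomp}, thereby pinning each sign $\ve_i$ to the non-conjugate monodromy subgroups $\langle B_i\rangle$ versus $\langle -E_2,B_i\rangle$ of the corresponding summand.
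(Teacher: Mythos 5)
Your reduction in the first two paragraphs is sound and consistent with the paper's setup: by Proposition~\ref{isom} an isomorphism forces $\widetilde\rho\circ f_{\ast}=Q\widetilde\rho'Q^{-1}$, the case where exactly one monodromy image contains $-E_2$ is settled by comparing images, and everything hinges on showing that any $f\in\mathcal{M}_g$ stabilizing $\rho$ up to conjugation preserves the signs $\widetilde\rho(\alpha_i)\in\{\pm E_2\}$ for $1\leq i\leq k$. But that step --- which you yourself flag as the hardest and only say you ``would expect to carry out'' --- is exactly the content of the proposition, and the mechanism you propose for it does not work. First, the conjugacy theory of the free product $\mathrm{Im}(\rho)\leq PSL(2;\Z)$ is sign-blind: $\ve_iE_2$ and $-\ve_iE_2$, like $B_i$ and $-B_i$, have the same image under $p$, so knowing that $f$ carries each factor $H_i$ to a conjugate of a factor constrains nothing about the lifted values in $\{\pm E_2\}$; likewise, conjugation-invariance of your character on $\ker\rho$ does not give $f_{\ast}$-invariance, which is what you actually need. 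Second, ``pinning each sign $\ve_i$ to the monodromy subgroup $\langle B_i\rangle$ versus $\langle -E_2, B_i\rangle$ of the corresponding summand'' presumes that the hypothetical bundle isomorphism respects the fiber connected sum decomposition, i.e.\ that ``the corresponding summand'' is well-defined; it is not, and Proposition~\ref{key prop} is a counterexample to precisely this style of reasoning: there a sign is absorbed \emph{across} summands (with $C$ of arbitrary, possibly infinite, order) using a $-E_2$ sitting in a \emph{different} summand. Ruling out the analogous cross-summand move for the $\alpha$-signs is the whole difficulty, and torsion-freeness of $H_i$ alone does not rule it out; Theorem~\ref{SF} applied to summands cannot substitute either, because the isomorphism need not restrict to isomorphisms of the summands.

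The paper closes this gap with a genuinely topological argument absent from your sketch. Assuming without loss of generality $\ve_1=1$, $\delta_1=-1$, it converts the hypothetical isomorphism into an embedding $\phi\colon\Sigma_{g-1}\setminus D^2\to\Sigma_g$ realizing the summand $M(\ve_2E_2,B_2,\ldots,\ve_gE_2,B_g)$ inside the second bundle, whose monodromy it calls $\rho$. It then shows that each curve $\phi(\alpha_i)$, $\phi(\beta_i)$ lies in the normal closure $N$ of $\tilde\alpha_1,\ldots,\tilde\alpha_g,\tilde\beta_2,\ldots,\tilde\beta_g$; this is where the infinite order of $B_1$ enters, via the induced isomorphism $\pi_1(\Sigma_g)/N\cong\Z\cong\langle B_1\rangle$. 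Consequently these curves have zero algebraic intersection number with $\tilde\alpha_1$, so $\tilde\alpha_1$ can be isotoped off the regular neighborhood $\phi(\Sigma_{g-1}\setminus D^2)$ into the complementary once-punctured torus, over which the monodromy takes values in (a conjugate of) $\langle B_1\rangle$. This yields $-E_2=\rho(\tilde\alpha_1')=(B_1)^n$, hence $B_1^{2n}=E_2$, contradicting the infinite order of $B_1$. Some version of this intersection-number and isotopy argument is the missing ingredient in your plan; without it, the claim that $f_{\ast}\alpha_i$ stays in the same sign-coset as $\alpha_i$ remains unproved.
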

\begin{proof}
We will prove the assertion by reduction to the absurd, that is, we suppose $(\ve_1, \ldots , \ve_k)\ne (\delta _1, \ldots , \delta _k)$ and the two bundles are isomorphic, and then lead a contradiction.  
Without loss of generality we may assume that $\ve_1=1$ and $\delta _1=-1$, 
so it is enough to consider the case where there exists a bundle isomorphism 
\begin{eqnarray*}
\tilde{\phi }\colon M(E_2, B_1, \ve_2E_2, B_2, \ldots, \ve_gE_2, B_g) \to M(-E_2, B_1, \delta_2E_2, B_2, \ldots, \delta_gE_2, B_g) 
\end{eqnarray*}
covering the base diffeomorphism $\phi \colon \Sigma_g\to \Sigma_g$. 
Denote the monodromy representation of  
$M(-E_2, B_1, \delta_2E_2, B_2, \ldots, \delta_gE_2, B_g)$ by $\rho $. 
Then we have  
\[  (\rho\circ \phi _{\ast })(\alpha _1)=E_2, \;\;
(\rho\circ \phi _{\ast })(\alpha _i)=\ve_{i}E_2 \;\; (2\leq i \leq g), \;\;  
(\rho\circ \phi _{\ast })(\beta _j)=B_{j}, \]
where $\alpha _j, \beta _j$ ($1\leq j\leq g$) are the canonical generators of $\pi _1(\Sigma _{g})$. 
Then, for each $i$ ($2\leq i\leq g$), the loops $\phi (\alpha _i)$, $\phi (\beta _i)$ are contained in the smallest normal subgroup $N$ generated by ${\alpha}_1, {\alpha} _2, \ldots, {\alpha}_g, {\beta }_2, \ldots , {\beta }_g$, because $\rho (\phi (\alpha _i))=\ve_{i}E_2$, $\rho (\phi (\beta _i))=B_{i}$ are contained in $\rho (N)=\langle -E_2, B_2, \ldots, B_g \rangle ^{\Im (\rho )}$ and the homomorphism 
\[ \bar{\rho } \colon \pi_1(\Sigma _g)/N\cong \langle {\beta }_1\rangle \cong \Z\to 
\rho \left(\pi_1(\Sigma _g)\right)/\rho (N) \cong \left<B _1\right>\cong \Z \]
induced by $\rho \colon \pi_1(\Sigma _g)\to SL(2; \Z)$ is an isomorphism. 
Here we note that $B_1$ does not belong to $\rho (N)$, since $p\circ \rho $ is of normal form and $p(B_1)$ is a generator of one factor of the free product decomposition of $\Im (p\circ \rho )$, where $p\colon SL(2;\Z)\to PSL(2;\Z)$ is the canonical projection. 
Since the loops $\phi (\alpha _i)$ and $\phi (\beta _i)$ ($2\leq i \leq g$) are elements of $N$, 
their intersection numbers with ${\alpha }_1$ are all $0$. 
Hence, the loop $\phi^{-1}(\alpha _1)$ has intersection number $0$ with any of $\alpha _i$ and $\beta _i$ ($2\leq i \leq g$). 
Recalling that the first homology group is the abelianization of the fundamental group, 
$\phi^{-1}(\alpha _1)$ is homologous to an integral linear combination of $\alpha _1$ and $\beta _1$. 
Since $p\circ \rho\circ \phi _{\ast }=p\circ \rho $ is of the normal form 
and $ (\rho\circ \phi _{\ast })(\phi ^{-1}(\alpha _1))=\rho (\alpha _1)=-E_2$, the coefficient of $\beta _1$ must be $0$. 
Thus we have $$\phi^{-1}(\alpha _1)=\alpha _1^ns, $$ 
where $n$ is an integer and $s$ is an element of the commutator subgroup $[\pi _1(\Sigma_g), \pi _1(\Sigma_g)]$. 
Then we obtain that 
$$(\rho\circ \phi _{\ast })(s)=(\rho\circ \phi _{\ast })(\alpha _1^{-n}\phi ^{-1}(\alpha _1))
=(\rho\circ \phi _{\ast })(\alpha _1)^{-n}\rho(\alpha _1)=-E_2.$$
This implies that $-E_2$ belongs to the commutator subgroup $[SL(2; \Z), SL(2; \Z)]$, which is a contradiction. 
Thus we have obtained that 
\[M(E_2, B_1, \ve_2E_2, B_2, \ldots, \ve_gE_2, B_g)
\not \cong M(-E_2, B_1, \delta_2E_2, B_2, \ldots, \delta_gE_2, B_g). \] 
This completes the proof. 
\end{proof}

Based on these propositions, we can prove Theorem~\ref{hom-sl} as follows. 

\begin{proof}[{\bf Proof of }$\mathrm{{\bf Theorem~\ref{hom-sl}}}$]
\begin{enumerate}
\item
The case where $m>l$. 
For any lift $\widetilde{\rho }$ of $\rho$, 
the monodoromy group $\mathrm{Im}(\widetilde{\rho })$ contains $-E_2$, 
since each $B_i$ ($l+1\leq i\leq m$) is of order $4$. 
In this case, replacing $B_i$ by $-B_i$ does not change the isomorphism class of the $T^2$-bundle by Propositions~\ref{cases} (1). Moreover, by applying Proposition~\ref{finite order}, we can change the signs $\ve _i$ ($k+1\leq i\leq l, \; m+1\leq i \leq g$) preserving the isomorphism class of the $T^2$-bundle (recall that $\ve _i$ is fixed to be $-1$ when $l+1\leq i\leq m$). Therefore, by Proposition~\ref{infinite order}, there are just $2^k$ isomorphism classes of $T^2$-bundles $M(\ve _1E_2, B_1, \ldots , \ve _gE_2, B_g)$ corresponding to the choices of $\ve _i$ ($1\leq i\leq k$). 
\item
The case where $m=l$. 
In this case, the order of $B_i$ is not $4$, so each $B_i$ with $k+1\leq i\leq g$ is uniquely determined by the conditions (a) and (c). Moreover, if $-E_2\in \mathrm{Im}(\rho )$, then there exists an integer $i$ with $1\leq i\leq g$ and $\ve_i=-1$. Now we argue the following three cases. 
\begin{enumerate}
\item[(i)]
If there exists an integer $i$ with $1\leq i\leq k$ and $\ve_i=-1$, 
then the isomorphism class depends only on the choices of $\ve_i$ ($1\leq i\leq k$) 
by the same argument as in (1). 
By Proposition~\ref{infinite order}, there are just $2^k-1$ isomorphism classes corresponding to such choices. 
\item[(ii)]
If $\ve_1=\cdots =\ve_k=1$ and there exists an integer $i$ with $k+1\leq i\leq g$ and $\ve _i=-1$, 
then such $T^2$-bundles are all isomorphic to each other by the same argument as in (1). 
Hence, the isomorphism class is unique in this case. 
\item[(iii)]
If $-E_2\not \in \mathrm{Im}(\rho )$, then we have $\ve _1=\cdots =\ve _g=1$. 
In this case, there is no ambiguity other than the choices of the signs of $B_i$ ($1\leq i\leq k$). 
By Proposition~\ref{cases} (2), the $2^k$ choices yield different isomorphism classes. 
Therefore, there are $2^k$ isomorphism classes. 
\end{enumerate}
Thus there are just $2^{k+1}$ isomorphism classes of $T^2$-bundles. 
\end{enumerate}
\end{proof}

\begin{proof}[{\bf Proof of }$\mathrm{{\bf Theorem~\ref{main thm}}}$]
The conditions (1) and (2) in Theorem~\ref{main thm} are equivalent to 
that there exist $f\in \mathcal{M}_g$ and $Q\in SL(2;\Z)$ such that 
$$Q\widetilde{\rho }_2Q^{-1}=\widetilde{\rho }_1\circ f_{\ast }. $$ 
Moreover, the Euler class $e(\xi _1)$ of $M(\ve_1E_2, B_1, \ldots , \ve_gE_2, B_g; m, n)$ lies in 
$$\Z^2/ \left< (B_1-E_2){\bf e}_1,  (B_1-E_2){\bf e}_2, \cdots ,  (B_g-E_2){\bf e}_1,  (B_g-E_2){\bf e}_2 \right>$$ 
if $\ve_1=\cdots =\ve_g=1$, and in 
$$\Z^2/ \left< 2{\bf e}_1, 2{\bf e}_2, (B_1-E_2){\bf e}_1,  (B_1-E_2){\bf e}_2, \cdots ,  (B_g-E_2){\bf e}_1,  (B_g-E_2){\bf e}_2 \right>$$ otherwise. 
Hence the condition (3) in Theorem~\ref{main thm} is equivalent to 
the one that $e(\xi _1)=f^{\ast }e(\xi _2)$ for the above $f\in \mathcal{M}_g$ and $Q\in SL(2;\Z)$. 
Hence, by Proposition~\ref{isom}, the conditions (1), (2), (3) in Theorem~\ref{main thm} give a necessary and sufficient condition for $\xi _1$ and $\xi _2$ to be isomorphic. 
\end{proof}

We obtain the following as a corollary to Theorem~\ref{main thm}. 

\begin{corollary}
Let $\xi _1$ and $\xi _2$ be orientable $T^2$-bundles whose monodromy groups coincide with $SL(2;\Z)$. 
Then $\xi _1$ and $\xi _2$ are bundle isomorphic. 
\end{corollary}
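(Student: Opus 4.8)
The plan is to verify the three conditions of Theorem~\ref{main thm} for $\xi_1$ and $\xi_2$ with $Q=E_2$. The starting observation is that $\Im(\widetilde\rho_1)=\Im(\widetilde\rho_2)=SL(2;\Z)$ forces $\Im(p\circ\widetilde\rho_1)=\Im(p\circ\widetilde\rho_2)=PSL(2;\Z)\cong\Z_2\ast\Z_3$. Hence the Kurosh decomposition of the image is just the single free product $\Z_3\ast\Z_2$, so in the notation of Theorem~\ref{hom-psl} we have $k=0$, $l=1$, $m=2$; in particular $m>l$. By the equivalence in Theorem~\ref{hom-psl}, the equality of images means that $p\circ\widetilde\rho_1$ and $p\circ\widetilde\rho_2$ lie in one $\mathcal{M}_g$-orbit, so after putting both into the common normal form $\rho$ we may regard $\widetilde\rho_1$ and $\widetilde\rho_2$ as lifts of the same $\rho$. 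Since $SL(2;\Z)$ is its own conjugate, condition (1) holds with $Q=E_2$.

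For condition (2) I would invoke Theorem~\ref{hom-sl}: because $m>l$ and $k=0$, the fibre $p_\ast^{-1}(\rho)$ is a single $\mathcal{M}_g$-orbit (there are $2^{k}=1$ of them). Thus the two lifts $\widetilde\rho_1$ and $\widetilde\rho_2$ automatically lie in the same orbit, which is exactly condition (2).

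The substantive step is condition (3). Since $-E_2\in SL(2;\Z)=\Im(\widetilde\rho_1)$, we are in the ``otherwise'' case, so it is enough to show that the submodule $M:=2\Z^2+\sum_{i=1}^{g}\Im(B_i-E_2)$ equals $\Z^2$; granting this, the difference of the two Euler representatives lies in $M$ for trivial reasons and (3) is satisfied. I would prove $M=\Z^2$ by reduction modulo $2$. The quotient $\Z^2/M$ is a quotient of $(\Z/2)^2$ on which every $\bar B_i$ acts as the identity. In the normal form the monodromy group is $\langle -E_2, B_1,\dots,B_g\rangle=SL(2;\Z)$, and since reduction $SL(2;\Z)\to SL(2;\Z/2)$ is surjective and sends $-E_2$ to the identity, the classes $\bar B_1,\dots,\bar B_g$ already generate $SL(2;\Z/2)\cong S_3$. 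Therefore all of $SL(2;\Z/2)$ acts trivially on $\Z^2/M$. But $SL(2;\Z/2)$ permutes the three nonzero vectors of $(\Z/2)^2$ transitively, so the coinvariants (the largest trivial quotient) of $(\Z/2)^2$ vanish; hence $\Z^2/M=0$ and $M=\Z^2$.

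Having verified (1), (2) and (3), Theorem~\ref{main thm} yields $\xi_1\cong\xi_2$. The only genuinely delicate point is condition (3): one must see that the local-coefficient Euler class module collapses entirely, and the cleanest way to see this is the mod-$2$ coinvariant computation above. Everything else is a direct reading of the normal form together with Theorems~\ref{hom-psl} and~\ref{hom-sl}.
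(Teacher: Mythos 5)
Your proposal is correct, and on conditions (1) and (2) it runs exactly parallel to the paper's proof: both arguments use Theorem~\ref{hom-psl} to arrange $p\circ\widetilde{\rho}_1=p\circ\widetilde{\rho}_2=\rho$ and then read off from Theorem~\ref{hom-sl}, with $k=0$, $l=1$, $m=2$, that $p_{\ast}^{-1}(\rho)$ is a single $\mathcal{M}_g$-orbit. Where you genuinely diverge is condition (3). The paper first reduces to one explicit reference bundle $M(E_2,B_1,E_2,B_2,E_2,\ldots,E_2)$ with $B_1=\pmt{0&1\\-1&1}$, $B_2=\pmt{0&1\\-1&0}$, and then simply observes that the four columns of $B_1-E_2$ and $B_2-E_2$ generate $\Z^2$ (in fact $\det(B_1-E_2)=1$, so $B_1-E_2$ alone suffices), making the Euler-class condition vacuous for that model. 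You instead prove, intrinsically and for an arbitrary normal-form description, that $2\Z^2+\sum_{i}\Im(B_i-E_2)=\Z^2$ by a mod-$2$ coinvariants computation: reduction $SL(2;\Z)\to SL(2;\Z/2)\cong S_3$ is surjective and kills $\pm E_2$, so the classes $\bar{B}_1,\ldots,\bar{B}_g$ generate $SL(2;\Z/2)$, which permutes the three nonzero vectors of $(\Z/2)^2$ transitively, forcing the coinvariants to vanish. This is correct and buys more than the paper's check: it shows the Euler obstruction collapses for \emph{any} monodromy whose image contains $-E_2$ and surjects onto $SL(2;\Z/2)$, whereas the paper's verification is tied to its particular choice of matrices; the paper's route, in exchange, is a two-line concrete computation.

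One small imprecision to fix: the branch of condition (3) in Theorem~\ref{main thm} is governed by the signs $\ve_i$ in the chosen description of $\xi_1$, not by whether $-E_2\in\Im(\widetilde{\rho}_1)$ (the paper's own proof uses a description with all $\ve_i=1$ and the first branch). This is harmless, for either of two reasons, and you should state one of them. First, since the orbit of lifts is unique and $m>l$, it contains lifts with some $\ve_i=-1$ (e.g.\ the normalization of \S 5.3, where the order-$4$ slot carries $\ve_i=-1$ by condition (b)), so you may simply choose such a description, and then the ``otherwise'' branch you verify is literally the right one. Second, even if all $\ve_i=1$, the telescoping identity $(AB-E_2)\x=(A-E_2)B\x+(B-E_2)\x$ shows that $-E_2\in\langle B_1,\ldots,B_g\rangle=SL(2;\Z)$ already forces $2\Z^2\subseteq\sum_i\Im(B_i-E_2)$, so the two branches of condition (3) impose the same requirement and your computation still applies. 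With a sentence to this effect, your proof is complete.
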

\begin{proof}
We set $B_1=\begin{pmatrix}0&1\\-1&1\end{pmatrix}$, $B_2=\begin{pmatrix}0&1\\-1&0\end{pmatrix}$. 
Then the $SL(2;\Z)$-bundle $$M(E_2, B_1, E_2, B_2, E_2, E_2, \ldots , E_2, E_2)$$ satisfies the condition, 
so it is enough to argue the case where $\xi _1$ is isomorphic to this bundle. Let $\widetilde{\rho }_1$ and $\widetilde{\rho }_2$ denote the monodromies of $\xi _1$ and $\xi _2$, respectively. 
We assume that $p\circ \widetilde{\rho }_2$ is of the normal form in the sense of Theorem~\ref{hom-psl} 
(notice that $p\circ \widetilde{\rho }_1$ is automatically so). 
Then the condition (1) is fulfilled by the assumption $\Im (\widetilde{\rho }_1)=\Im (\widetilde{\rho }_2)=SL(2;\Z)$. 
Since $\Im (p\circ \widetilde{\rho }_1)=\Im (p\circ \widetilde{\rho }_2)$, we may assume that $p\circ \widetilde{\rho }_1=p\circ \widetilde{\rho }_2$ holds by Theorem~\ref{hom-psl}. 
Putting $\rho =p\circ \widetilde{\rho }_1=p\circ \widetilde{\rho }_2$, then $\widetilde{\rho }_1$ and $\widetilde{\rho }_2$ are both lifts of $\rho $ with respect to $p$. 
Since $\Im (\rho )=PSL(2;\Z)\cong \Z_2\ast \Z_3$, 
the integers $k$, $l$, $m$ in Theorem~\ref{hom-psl} are determined as $k=0$, $l=1$, $m=2$. 
Therefore, the $\mathcal{M}_g$-orbit of lifts of $\rho $ is unique by Theorem~\ref{hom-sl}, 
and in particular, the condition (2) is fulfilled. 
Then $SL(2;\Z)$-bundles corresponding to lifts of $\rho $ are all isomorphic to each other. 
In particular, $\xi _2$ is isomorphic to 
$M(E_2, B_1, E_2, B_2, E_2, E_2, \ldots , E_2, E_2; m, n)$ for some $(m,n)\in \Z^2$. 
Finally, we can check the condition (3) as follows. 
Since the four column vectors of the two matrices 
$$B_1-E_2=\begin{pmatrix}-1&1\\-1&0\end{pmatrix}, 
\; B_2-E_2=\begin{pmatrix} -1&1\\-1&-1\end{pmatrix}$$ generates $\Z^2$, 
there are indeed $\x_1$ and $\x_2\in \Z^2$ such that 
$$\begin{pmatrix}0\\0\end{pmatrix}-\begin{pmatrix}m\\n\end{pmatrix}=(B_1-E_2)\x_1+(B_2-E_2)\x_2, $$
which means that the condition (3) is fulfilled. 
Therefore, by Theorem~\ref{main thm}, the $T^2$-bundles $\xi_1$ and $\xi _2$ are isomorphic. 
\end{proof}

\section{$T^2$-bundles with compatible symplectic structures}
Let $\pi \colon M^4\to \Sigma _g$ be an orientable $\Sigma _h$-bundle over a closed orientable surface $\Sigma _g$. A symplectic structure $\omega $ on $M^4$ is said to be compatible with $\pi $ if its restriction to each fiber $\pi ^{-1}(b) \; (b\in \Sigma _g)$ is also a symplectic form. 
Whether a given surface bundle over a surface admits a compatible symplectic structure is determined by the following result. 

\begin{theorem}[Thurston \cite{Th76}]
A $\Sigma _h$-bundle $\pi \colon M^4\to \Sigma _g$ admits a compatible symplectic structure 
if and only if the homology class represented by a fiber $\Sigma _h$ is nonzero in $H_2(M^4; \R)$. 
\end{theorem}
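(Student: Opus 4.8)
The plan is to prove both implications, the forward one being immediate and the reverse one requiring an explicit construction. First I would dispose of the ``only if'' direction: if $\omega$ is a symplectic form on $M^4$ compatible with $\pi$, then by definition its restriction to a fiber $F=\pi^{-1}(b)\cong \Sigma _h$ is a symplectic, hence area, form, so $\int _F\omega =\langle [\omega ], [F]\rangle \ne 0$. Since the de Rham class $[\omega ]\in H^2(M^4;\R)$ pairs nontrivially with $[F]$, the fiber class $[F]$ cannot vanish in $H_2(M^4;\R)$.

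For the reverse direction I assume $[F]\ne 0\in H_2(M^4;\R)$. The key step, and the one I expect to be the main obstacle, is to produce a closed $2$-form $\sigma $ on $M^4$ whose restriction to every fiber is a positive area form. I would obtain it in two stages. First, because over $\R$ the evaluation pairing $H^2(M^4;\R)\times H_2(M^4;\R)\to \R$ is nondegenerate (universal coefficients), the hypothesis $[F]\ne 0$ yields a class $a\in H^2(M^4;\R)$ with $\langle a, [F]\rangle \ne 0$; representing $a$ by a closed $2$-form $\sigma _0$, the number $c:=\int _F\sigma _0$ is a nonzero constant, the same on every fiber since all fibers are homologous and $\sigma _0$ is closed, which I normalize to be positive. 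Now $\sigma _0|_F$ has positive total integral but need not be pointwise positive, so the second and more delicate stage is to correct it. I would choose a fiberwise positive area form $\tau $ on $M^4$, which exists because the structure group is orientation preserving, scaled so that $\int _F\tau =c$ on each fiber. Then on each fiber $\tau |_F-\sigma _0|_F$ is a $2$-form of zero integral on a surface, hence exact; using a smoothly varying fiber metric and fiberwise Hodge theory one gets a primitive $1$-form depending smoothly on the fiber, which extends via any connection to a global $1$-form $\mu $ on $M^4$ with $(d\mu )|_F=(\tau -\sigma _0)|_F$. Setting $\sigma =\sigma _0+d\mu $ gives a closed form with $\sigma |_F=\tau |_F$ positive. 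The smoothness of this fiberwise primitive and its globalization is the technical heart of the argument.

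The final step is Thurston's trick. I fix an area form $\omega _B$ on the base $\Sigma _g$ and set $\omega _t=\sigma +t\,\pi ^{\ast }\omega _B$ for $t>0$; each $\omega _t$ is closed. To verify nondegeneracy I compute $\omega _t\wedge \omega _t=\sigma \wedge \sigma +2t\,\sigma \wedge \pi ^{\ast }\omega _B$, where the $t^2$-term drops out because $\pi ^{\ast }(\omega _B\wedge \omega _B)=0$ on the $2$-dimensional base. In an adapted frame the purely vertical part of $\sigma $ is the fiber area form, so $\sigma \wedge \pi ^{\ast }\omega _B$ equals a positive multiple of the volume form at every point; by compactness of $M^4$ there is a uniform positive lower bound, and since $\sigma \wedge \sigma $ is bounded, $\omega _t\wedge \omega _t>0$ everywhere once $t$ is large. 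Thus $\omega _t$ is symplectic, and since $(\pi ^{\ast }\omega _B)|_F=0$ we have $\omega _t|_F=\sigma |_F$, a symplectic form on each fiber. Hence $\omega _t$ is a compatible symplectic structure, which completes the proof.
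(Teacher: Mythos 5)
The paper offers no proof of this statement: it is quoted directly from Thurston \cite{Th76}, so there is no internal argument to compare against. Your proposal is correct, and it is essentially Thurston's original proof. The ``only if'' direction is exactly right: $\int_F\omega=\langle[\omega],[F]\rangle\ne 0$ forces $[F]\ne 0$ over $\R$. For the converse, your outline is sound, and the final inflation step $\omega_t=\sigma+t\,\pi^{\ast}\omega_B$ with the computation $\omega_t\wedge\omega_t=\sigma\wedge\sigma+2t\,\sigma\wedge\pi^{\ast}\omega_B$ and the compactness argument for large $t$ is the standard trick, correctly executed (on a $4$-manifold, $\omega_t\wedge\omega_t>0$ everywhere does give nondegeneracy, and $\omega_t|_F=\sigma|_F$ since $\pi^{\ast}\omega_B$ vanishes on vertical vectors).

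The one place you deviate from the usual presentation is the construction of the closed form $\sigma$ restricting to an area form on every fiber. Thurston's route (see also McDuff--Salamon's textbook account) avoids analysis entirely: choose local trivializations over a cover $\{U_i\}$ of $\Sigma_g$, find $1$-forms $\mu_i$ on $\pi^{-1}(U_i)$ with $(\sigma_0+d\mu_i)|_F$ an area form for each fiber over $U_i$, and set $\sigma=\sigma_0+d\bigl(\sum_i(\pi^{\ast}\rho_i)\mu_i\bigr)$ for a partition of unity $\{\rho_i\}$ on the base; since $d(\pi^{\ast}\rho_i)$ kills vertical vectors, $\sigma|_F$ is a convex combination of area forms inducing the same orientation, hence an area form. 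Your alternative via a global fiberwise Hodge-theoretic primitive also works --- the coexact primitive is canonical given a family of fiber metrics, so the locally smooth families patch, and $(d\mu)|_F=d(\mu|_F)$ holds for the horizontal-zero extension --- but it invokes smooth dependence of the Green's operator on parameters, a heavier tool for the same conclusion, and you rightly flag it as the technical heart of your version. Two small points you should make explicit: (i) the existence of your fiberwise positive $\tau$ requires the vertical bundle to be orientable, which is automatic under your hypothesis, since an orientation-reversing monodromy would give $[F]=-[F]$ and hence $[F]=0$ in $H_2(M^4;\R)$ (and the paper works with orientable bundles in any case); (ii) the normalization $\int_{F_b}\tau=c$ should be achieved by multiplying $\tau$ by a positive function pulled back from the base, which preserves fiberwise positivity.
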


When $h\ne 1$, a compatible symlectic struture always exists since the above condition is automatically fulfilled. When $h=1$, namely, in the case of $T^2$-bundles, the situation is a little more complicated. 
First we consider when $g=0$. In this case, no nontrivial $T^2$-bundle admits a compatible symplectic structure nor even a symplectic form on the total space. For, the second Betti number of the total space of a $T^2$-bundle over $S^2$ is $0$ unless its Euler class is $(0,0)$. When $g=1$, Geiges has given the following answer based on Sakamoto-Fukuhara's classification (Theorem~\ref{SF}).  

\vspace{8pt}

\begin{theorem}[Geiges \cite{Ge92}]~\label{Geiges}
An orientable $T^2$-bundle $\pi \colon M^4\to T^2$ admits a compatible symplectic structure if and only if it is not isomorphic to $$M(E_2, E_2; m, 0) \; (m\ne 0) \;\; \text{nor} \;\; M(E_2, C^k; m, n) \; (n\ne 0), $$ where  $C=\begin{pmatrix} 1&1 \\ 0&1 \end{pmatrix}$ and $k\in \Z$. On the other hand, every orientable $T^2$-bundle over $T^2$ admits a symplectic structure on the total space. 
\end{theorem}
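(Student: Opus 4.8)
The plan is to reduce the statement to a purely homological criterion via Thurston's theorem \cite{Th76} and then to settle that criterion using the Sakamoto--Fukuhara classification (Theorem~\ref{SF}). Since the fibre is a torus, Thurston's criterion reads: the bundle $\pi\colon M^4\to T^2$ admits a compatible symplectic structure if and only if the fibre class $[F]$ is nonzero in $H_2(M^4;\R)$. So the whole problem becomes the computation of when $[F]$ vanishes, and the task splits into (i) identifying the vanishing locus of $[F]$ in terms of the monodromy and the Euler class, and (ii) matching that locus, via Theorem~\ref{SF}, with the two listed families.

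For step (i) I would first dispose of the $SL(2;\Z)$-bundles, i.e.\ the case of vanishing Euler class. On the model $M(A,B;0,0)=\R^2\times T^2/\!\sim$ the fibrewise area form $ds\wedge dt$ is invariant under the affine monodromy action (every element of $SL(2;\Z)$ is area preserving), hence descends to a globally defined closed $2$-form on $M$ with $\int_F ds\wedge dt=1$; thus $[F]\neq 0$ and every $SL(2;\Z)$-bundle is compatibly symplectic. For nonvanishing Euler class I would compute $[F]$ from the Mayer--Vietoris sequence of the decomposition $M=\pi^{-1}(D)\cup\pi^{-1}(T^2\setminus\mathrm{Int}\,D)$, whose overlap $\pi^{-1}(\partial D)$ is the mapping torus of the translation $\binom{s}{t}\mapsto\binom{s}{t}+\binom{m}{n}$ coming from the Euler-class regluing $h$. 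The upshot, which I would record as the key lemma, is that $[F]=0$ in $H_2(M;\R)$ precisely when the Euler class is \emph{non-torsion} in the coefficient module $\Z^2/\!\sim$ in which it lives, namely $\Z^2/\langle (A-E_2)\Z^2+(B-E_2)\Z^2\rangle$; when it is torsion one produces a multisection surface meeting $F$ transversally, forcing $[F]\neq 0$.

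It then remains to read off, using Theorem~\ref{SF}, for which normal forms $M(\ve E_2,B;m,n)$ the Euler class is non-torsion. If $\ve=-1$, or if $B$ has finite order ($2,4,6$), is hyperbolic, or is a trace $(-2)$ parabolic, then $A-E_2$ or $B-E_2$ is invertible over $\R$, so the coefficient module is finite and the Euler class is always torsion; these bundles are therefore always compatibly symplectic. The only surviving possibilities with $\ve=1$ are $B=E_2$, whose module is $\Z^2$ and whose Euler class is non-torsion exactly when $(m,n)\neq(0,0)$ (normalised to $(m,0)$ with $m\neq 0$ by Remark~\ref{rem:Euler}), and $B$ a trace $2$ parabolic, conjugate to $C^k$ with $k\neq 0$, whose module is $\Z/k\oplus\Z$ and whose Euler class is non-torsion exactly when $n\neq 0$. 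These are precisely the two families $M(E_2,E_2;m,0)$ ($m\neq 0$) and $M(E_2,C^k;m,n)$ ($n\neq 0$), which proves the first assertion. For the final sentence I would note that each total space is a nilmanifold or solvmanifold modelled on a $4$-dimensional Lie group carrying a left-invariant symplectic form (the exceptional members being Kodaira--Thurston-type manifolds), so a symplectic structure on $M^4$ always exists even when no fibre-compatible one does.

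The hard part will be step (i): the precise homological computation showing that $[F]$ genuinely \emph{dies} when the Euler class is non-torsion, rather than merely that the obvious sections fail to exist. The delicate case is the trace $2$ parabolic monodromy $B=C^k$ with $k\neq 0$, where $H^1(F;\R)$ carries a nontrivial local system over $T^2$; there the Serre spectral sequence has only a one-dimensional invariant line in $E_2^{0,1}$, so the transgression $d_2$ on the fundamental fibre class $E_2^{0,2}\to E_2^{2,1}=H^1(F)_{\mathrm{coinv}}$ cannot be evaluated naively by the derivation rule and must be read off from the Euler-class/translation data directly. Keeping track of the interaction between this twisting and the Euler translation $\binom{m}{n}$ --- equivalently, carrying out the Mayer--Vietoris bookkeeping with the correct local coefficients --- is where essentially all the work lies; once the equivalence ``$[F]=0 \Leftrightarrow$ Euler class non-torsion'' is established, the matching with Theorem~\ref{SF} and the symplectic constructions are routine.
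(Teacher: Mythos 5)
The paper never proves this statement: it is quoted from Geiges \cite{Ge92}, and the only related argument in the paper, the proof of Theorem~\ref{Euler}, runs in the opposite logical direction --- it takes Geiges' and Walczak's lists as input and repackages them as the condition that the Euler class be torsion. Your proposal inverts that logic: you prove the torsion criterion directly (Thurston's theorem plus the key lemma ``$[F]=0$ in $H_2(M;\R)$ iff the Euler class is non-torsion in $\Z^2/\left((A-E_2)\Z^2+(B-E_2)\Z^2\right)$'') and then deduce the two exceptional families by matching against Theorem~\ref{SF}. That is in substance Geiges' original route, and your case bookkeeping is right: $\det(B-E_2)=2-\operatorname{tr}B$, so the coefficient module is infinite exactly for $\operatorname{tr}B=2$, i.e.\ $B=E_2$ or $B$ conjugate to $C^k$, $k\ne 0$, and your readings of the modules $\Z^2$ and $\Z/k\oplus\Z$ and of the normalization $(m,n)\sim(d,0)$ via Remark~\ref{rem:Euler} are correct. (Minor slip: your enumeration of finite orders omits order $3$ (trace $-1$); harmless, since $\det(B-E_2)=3\ne0$ there, so the conclusion is unaffected.) Your closing argument for the last sentence --- left-invariant symplectic forms on the model geometries --- is also how Geiges handles it.

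The one genuine gap is the key lemma itself, which you assert and plan but do not establish; fortunately both directions close along your own lines. For ``torsion $\Rightarrow[F]\ne0$'': if $r(m,n)=(A-E_2)\x+(B-E_2)\y$, then fiberwise multiplication by $r$ is a well-defined fiber-preserving map $M(A,B;m,n)\to M(A,B;rm,rn)$, and $M(A,B;rm,rn)\cong M(A,B;0,0)$ by Corollary~\ref{g=1} with $Q=E_2$; the preimage of the zero section is a closed surface $S$ meeting each fiber transversally in $r^2$ positive points, so $[S]\cdot[F]=r^2\ne0$. For ``non-torsion $\Rightarrow[F]=0$'' you only need the two normal forms. $M(E_2,E_2;m,0)$ is $S^1\times N_m$ with $N_m$ the Heisenberg nilmanifold, where the fiber circle is $m$-torsion in $H_1(N_m;\Z)$, so $[F]=0$ by K\"unneth. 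For $M(E_2,C^k;m,n)$ with $n\ne0$, the transgression difficulty you flag on $E_2^{0,2}$ in cohomology disappears if you instead use the \emph{homology} Serre spectral sequence with local coefficients: the differential $d^2\colon E^2_{2,0}=H_2(T^2;\Q)\to E^2_{0,1}=H_1(F;\Q)_{\pi_1}$ is precisely the Euler class reduced to coinvariants, and since $(C^k-E_2)\Q^2=\Q\e_1$ this image is $n\,\bar{\e}_2\ne0$; hence $E^{\infty}_{2,0}=0$, so $\pi_{\ast}\colon H_2(M;\Q)\to H_2(T^2;\Q)$ vanishes. As $[S]\cdot[F]$ computes $\pi_{\ast}[S]$, the fiber then pairs trivially with all of $H_2(M;\Q)$, and nondegeneracy of the rational intersection form gives $[F]=0$. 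With these two computations inserted, your proof is complete and, unlike the paper's treatment, makes the $g=1$ case of Theorem~\ref{Euler} self-contained rather than derived from \cite{Ge92}.
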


The case where $g\geq 2$ has been settled by Walczak as follows. 

\vspace{8pt}

\begin{theorem}[Walczak \cite{Wa05}]~\label{Walczak}
An orientable $T^2$-bundle $$\pi \colon M(A_1, B_1, \ldots , A_g, B_g; m, n)\to \Sigma _g$$ with $g\geq 2$ admits a compatible symplectic structure if and only if its total space $M^4$ admits a symplectic structure. Moreover, such $T^2$-bundles are classified by their monodromies and Euler classes as follows. 
\begin{enumerate}
\item
$A_i=B_i=E_2$ for all $i$ $(1\leq i\leq g)$ and $(m, n)=(0,0)$. 
\item
The monodromy is nontrivial and there exists a nonzero vector $\x=(x_1, x_2) \in \Z^2$ such that 
$A_i\x =\x, \; B_i\x=\x$ for all $i$ $(1\leq i\leq g)$ and $nx_1-mx_2=0$. 
\item
There is not a nonzero vector $\x \in \Z^2$ such that $A_i\x =\x$ and $B_i\x=\x$ for all $i$. 
\end{enumerate}
\end{theorem}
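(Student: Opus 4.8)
The plan is to deduce Theorem~\ref{Walczak} from Thurston's criterion (the theorem of Thurston stated above) together with a homological computation of the fibre class, and only the ``compatible $\iff$ total space symplectic'' equivalence will require genuine $4$--dimensional input. Thurston's theorem says that $\pi\colon M^4\to\Sigma_g$ admits a compatible symplectic structure exactly when the class $[F]\in H_2(M^4;\R)$ of a fibre $F\cong T^2$ is nonzero, so the first and main step is to determine precisely when $[F]=0$.

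To compute $[F]$ I would run the homology Leray--Serre spectral sequence of the bundle with real coefficients, $E^2_{p,q}=H_p(\Sigma_g;H_q(F;\R))$, the coefficients being twisted by the monodromy action of $\pi_1(\Sigma_g)$ on $H_\ast(F;\R)$. Since the monodromy lands in $SL(2;\Z)$ it acts trivially on $H_2(F;\R)\cong\R$, so the fibre class lives in $E^2_{0,2}=H_0(\Sigma_g;H_2(F;\R))\cong\R$. Because $\Sigma_g$ is two--dimensional, the only differential that can hit this group is $d^2\colon E^2_{2,1}\to E^2_{0,2}$, all higher differentials vanishing for dimension reasons. Poincar\'e duality for the local system identifies $E^2_{2,1}=H_2(\Sigma_g;H_1(F;\R))$ with the space of monodromy--invariant vectors $V=\{\x\in\R^2 : A_i\x=\x,\ B_i\x=\x \text{ for all } i\}$, and under this identification $d^2$ becomes the pairing of $V$ against the Euler class, $\x=(x_1,x_2)\mapsto n x_1-m x_2$ (up to the orientation conventions fixed in \S4.2). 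Hence $[F]\neq 0$ if and only if this linear functional vanishes identically on $V$.

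Reading this off according to $\dim V$ reproduces the trichotomy in the statement. If the monodromy is trivial then $V=\R^2$ and the functional vanishes exactly when $(m,n)=(0,0)$, which is case (1); if the monodromy is nontrivial but fixes a nonzero vector then $\dim V=1$ and the functional vanishes exactly when $n x_1-m x_2=0$ for a generator $\x=(x_1,x_2)$ of $V$, which is case (2); and if there is no nonzero common fixed vector then $V=0$ and the functional vanishes vacuously, which is case (3). The complementary cases, namely trivial monodromy with $(m,n)\neq(0,0)$ and the unipotent situation $n x_1-m x_2\neq 0$, are precisely the two excluded families $M(E_2,\dots,E_2;m,0)$ with $m\neq 0$ and $M(E_2,C^k,\dots,E_2,E_2;m,n)$ with $n\neq 0$, after normalising the Euler class by a change of fibre coordinates $Q\in SL(2;\Z)$ and putting $\x=(1,0)$. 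This also matches the reformulation ``$e(\xi)$ is a torsion'' used in the introduction.

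The remaining, and hardest, point is the equivalence ``compatible symplectic $\iff$ the total space is symplectic''. One direction is immediate, since a compatible form is in particular a symplectic form on $M^4$. For the converse I would argue the contrapositive: in each excluded case ($[F]=0$) the total space carries no symplectic structure at all. Here the spectral--sequence bookkeeping above is not enough, and indeed these $M^4$ can have $b_2^+>0$ (for trivial monodromy with $(m,0)$ the total space is $Y_m\times S^1$, with $Y_m$ the Euler--number--$m$ circle bundle over $\Sigma_g$, and $b_2^+=b_1(Y_m)=2g>0$), so the usual positivity obstruction is insufficient. The argument must therefore use a true gauge--theoretic input: in the excluded cases the classification realises $M^4$ as $N^3\times S^1$ or an analogous mapping torus, and one invokes the Seiberg--Witten obstruction---equivalently the theorem that $N^3\times S^1$ is symplectic only if $N^3$ fibres over $S^1$---to rule out a symplectic structure, since $Y_m$ with $m\neq0$ does not fibre over $S^1$. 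This is the genuine obstacle of the proof; everything else reduces to the formal computation of $[F]$ described above.
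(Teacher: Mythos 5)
First, a point of order: the paper does not prove this statement anywhere --- it is quoted verbatim from Walczak \cite{Wa05} (the paper only repackages it, together with Geiges' theorem, as ``the Euler class is a torsion'' in Theorem~\ref{Euler}). So there is no in-paper proof to compare against, and your proposal has to be judged on its own merits as a reproof of Walczak's theorem.

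Your first half is essentially correct and is the standard route. The fibre class survives to $H_2(M^4;\R)$ if and only if $d^2\colon E^2_{2,1}\to E^2_{0,2}$ vanishes; Poincar\'e duality with local coefficients gives $E^2_{2,1}\cong H^0(\Sigma_g;\mathcal{H}_1(F;\R))=V$; and the pairing $\x\mapsto nx_1-mx_2=\det\bigl(\x,(m,n)\bigr)$ is well defined because it is $SL(2;\Z)$-equivariant, hence descends to the coinvariants $\Z^2/\langle (A_i-E_2)\u,(B_i-E_2)\v\rangle$ in which $e(\xi)$ lives. Reading off $\dim V=2,1,0$ then yields cases (1), (2), (3) exactly as you say. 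The one step you assert rather than prove --- that $d^2$ \emph{is} this Euler pairing --- is the crux of this half; it is true and standard for affine torus bundles, but a complete proof would need either a reference or a comparison with the obstruction-theoretic definition of $e(\xi)$ in \S 4.2 of the paper.

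The genuine gap is in the converse (``total space symplectic $\Rightarrow$ compatible form exists''). Your reduction of all excluded cases to products $N^3\times S^1$ fails for the second family: $M(E_2,C^k,E_2,\ldots,E_2;m,n)$ with $k\neq 0$, $n\neq 0$ is not a product. Translations of the fibre in the invariant direction $(1,0)$ do define a free circle action, but its quotient is the Euler-number-$n$ circle bundle $Y_n\to\Sigma_g$, so $M^4$ is a circle bundle over $Y_n$ --- a two-step tower, with $H_1(M;\Z)\cong\Z^{2g}\oplus\bigl(\Z^2/\langle(0,k),(m,n)\rangle\bigr)$ --- and you offer no product splitting; consequently the Friedl--Vidussi theorem (symplectic $S^1\times N$ implies $N$ fibred) simply does not apply to it. Ruling out symplectic structures on this family requires Seiberg--Witten input adapted to free circle actions (Baldridge's formula, or Meng--Taubes torsion computations on the quotient), which is exactly the nontrivial content of Walczak's paper and is absent from your sketch. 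A smaller quibble on the family you do handle: invoking Friedl--Vidussi for $Y_m\times S^1$ is anachronistic overkill --- Meng--Taubes plus Taubes' nonvanishing theorem already kills the Seiberg--Witten invariants of $S^1\times Y_m$ with $m\neq 0$, $b_2^+=2g>1 $ --- though that affects economy, not correctness. As it stands, your proof establishes the trichotomy for \emph{compatible} structures (given the flagged $d^2$ identification) but leaves the ``only if'' direction of the first sentence unproved on one of the two excluded families.
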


Thus the problem of determining which surface bundle over a surface admits a symplectic structure has already been solved. However, the statements of Theorems~\ref{Geiges} and \ref{Walczak} on the existence of compatible symplectic structures can be briefly summarized as follows. 
This was pointed out by Yoshihiko Mitsumatsu. 

\vspace{8pt}

\begin{theorem}~\label{Euler}
Let $g$ be a non-negative integer. Then an orientable $T^2$-bundle $$\pi \colon M(A_1, B_1, \ldots , A_g, B_g; m, n)\to \Sigma _g$$ admits a compatible symplectic structure if and only if its Euler class is a torsion. 
\end{theorem}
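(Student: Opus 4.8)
The plan is to deduce the statement from the classifications of Geiges (Theorem~\ref{Geiges}) and Walczak (Theorem~\ref{Walczak}) by reformulating their case-by-case conditions as the single condition that the Euler class be a torsion element; the case $g=0$ then follows at once from the Betti-number remark preceding Theorem~\ref{Geiges}. First I would record what ``torsion'' means concretely. As explained in \S4.2, the Euler class of $M(A_1,B_1,\ldots,A_g,B_g;m,n)$ is represented by $(m,n)$ in the module of coinvariants
\[
H^2(\Sigma_g;\{\pi_1(T^2)\})\cong \Z^2/M,\qquad
M=\sum_{i=1}^g\bigl(\Im(A_i-E_2)+\Im(B_i-E_2)\bigr).
\]
Hence $e(\xi)$ is torsion if and only if $(m,n)$ lies in the rational subspace $W:=M\otimes\Q\subseteq\Q^2$, that is, in the $\Q$-span of the images of all the $A_i-E_2$ and $B_i-E_2$.

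The one linear-algebra lemma that makes the three references line up is the following. Since $SL(2;\Z)$ preserves the determinant form $\omega_0(\x,\y)=x_1y_2-x_2y_1$ on $\Q^2$, a direct computation gives $\omega_0((X-E_2)\x,\y)=-\omega_0(\x,X^{-1}(X-E_2)\y)$ for every $X\in SL(2;\Z)$, whence $\Im(X-E_2)^{\perp}=\Ker(X-E_2)$ with respect to $\omega_0$. Taking orthogonal complements of the defining sum then yields
\[
W^{\perp}=\bigcap_{i=1}^g\bigl(\Ker(A_i-E_2)\cap\Ker(B_i-E_2)\bigr)=:V,
\]
the common fixed subspace of the monodromy in $\Q^2$; as $\omega_0$ is nondegenerate, $W=V^{\perp}$ and $\dim W=2-\dim V$.

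Finally I would run the case analysis on $d:=\dim V\in\{0,1,2\}$ and match each case with the references. If $d=0$ then $W=\Q^2$, so every Euler class is torsion; this is exactly the hypothesis of Walczak's case (3) (no nonzero common fixed vector), and for $g=1$ it describes precisely the bundles avoiding Geiges's excluded families, so a compatible structure always exists. If $d=2$ the monodromy is trivial, $W=0$, and $e(\xi)$ is torsion iff $(m,n)=\zv$; this matches Walczak's case (1) and, via the $SL(2;\Z)$-action sending any nonzero $(m,n)$ to $(\gcd(m,n),0)$, Geiges's excluded family $M(E_2,E_2;m,0)$. If $d=1$, with $V=\spann{\x}$ for a primitive $\x=(x_1,x_2)$, then $W=V^{\perp}=\{\,\y:x_1y_2-x_2y_1=0\,\}$, so $e(\xi)$ is torsion iff $nx_1-mx_2=0$; this is precisely Walczak's case (2), and for $g=1$ the bundle reduces up to isomorphism to $M(E_2,C^k;m,n)$ with $\x=\e_1$, whose torsion condition $n=0$ is exactly the one in Geiges's theorem. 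In each case the torsion condition coincides with membership in Walczak's list (resp.\ with avoidance of Geiges's excluded families), which by those theorems is equivalent to admitting a compatible symplectic structure.

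The main obstacle I anticipate is bookkeeping rather than mathematics: one must check that the three values of $\dim V$ partition Walczak's cases (1)--(3) and Geiges's excluded families exhaustively, and in particular that the bundles outside those lists (nonzero Euler class with $d=1$ or $d=2$) are of none of the types (1), (2), (3)\,---\,so that the equivalence is genuinely two-sided and not merely ``torsion $\Rightarrow$ symplectic''.
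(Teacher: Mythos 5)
Your proposal is correct and takes essentially the same route as the paper: both reduce the statement to the Geiges--Walczak classifications via the trichotomy on $\rank(X)\in\{0,1,2\}$ for the span of the images of $A_i-E_2$, $B_i-E_2$ (equivalently, your $d=2-\rank(X)$ for the common fixed subspace), compute the relation submodule of $H^2(\Sigma_g;\{\pi_1(T^2)\})$ in each case, and match torsion of $[(m,n)]$ against the excluded families. The only difference is cosmetic: your duality lemma $\Im(X-E_2)^{\perp}=\Ker(X-E_2)$ with respect to the determinant form makes explicit the correspondence between image rank and fixed-vector conditions that the paper asserts directly (organizing case (2) via $A_i=QC^{k_i}Q^{-1}$, $B_i=QC^{l_i}Q^{-1}$ and $\Im(X)=\langle d\,\q_1\rangle$).
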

\begin{proof}
We put $M_0=M(A_1, B_1, \ldots , A_g, B_g)$ and $C=\begin{pmatrix} 1&1 \\ 0&1 \end{pmatrix}$. 
Let $X$ be a $2\times 4g$ matrix consisting of $4g$ column vectors of the following $2g$ matrices; 
$$A_1-E_2, \; B_1-E_2, \; \ldots , \; A_g-E_2, \; B_g-E_2. $$
Then the first Betti number of $M_0$ can be described as $b_1(M_0)=2g+2-\rank (X)$. 
Hence, the three cases $b_1(M_0)=2g+2, \; 2g+1, \; 2g$ can be interpreted to the following three conditions, respectively. 
\begin{enumerate}
\item
$A_1=B_1=\cdots =A_g=B_g=E_2$. 
\item
there exist $Q\in SL(2;\Z)$ and $k_i, l_i\in \Z$ such that 
$A_i=QC^{k_i}Q^{-1}, \; B_i=QC^{l_i}Q^{-1}$. 
\item
otherwise. 
\end{enumerate}
Then the necessary and sufficient condition that Geiges and Walczak gave can be rephrased as follows, 
where $\q_1$ denotes the first column vector of $Q$. 
\begin{enumerate}
\item
and $(m,n)=(0,0)$.
\item
and $\begin{pmatrix}m\\n\end{pmatrix}\in \langle \q_1 \rangle$. 
\item
and $(m, n)$ is arbitrary. 
\end{enumerate}
This condition is equivalent to the one that the Euler class $[(m, n)]\in \Z^2/ \Im (X)$ is a torsion element. 
Indeed, if (1), $\Im (X)=\{ \zv \}$, 
if (2), $$\Im (X)=\langle k_1\q_1, l_1\q_1, \ldots , k_g\q_1, l_g\q_1 \rangle=\langle d \q_1 \rangle ,$$
where $d$ is the greatest common divisor of $k_1, l_1, \ldots , k_g, l_g$, 
and if (3), $\Z^2/\Im (X)$ is a finite group. This completes the proof of the theorem. 
\end{proof}

From the viewpoint of our main theorem, Theorem~\ref{Euler} can be interpreted as follows,  
which seems a natural generalization of Geiges' condition (the former part of Theorem~\ref{Geiges}). 

\vspace{8pt}

\begin{theorem}~\label{compatible}
Let $g$ be a non-negative integer. An orientable $T^2$-bundle $\pi \colon M^4\to \Sigma _g$ admits a compatible symplectic structure if and only if it is not isomorphic to 
$$M(E_2, E_2, \ldots , E_2, E_2; m, 0) \; (m\ne 0) \;\; \text{nor} \;\; M(E_2, C^k, \ldots , E_2, E_2; m, n) \; (n\ne 0), $$ where $C=\begin{pmatrix} 1&1 \\ 0&1 \end{pmatrix}$ and $k\in \Z$. 
\end{theorem}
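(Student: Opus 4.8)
The plan is to derive Theorem~\ref{compatible} from Theorem~\ref{Euler}, which already identifies the existence of a compatible symplectic structure with the condition that the Euler class $e(\xi)$ be torsion. Thus it suffices to prove that $e(\xi)$ fails to be torsion precisely when $\xi$ is isomorphic to one of the two listed bundles. I keep the notation $M_0=M(A_1,B_1,\ldots,A_g,B_g)$, the $2\times 4g$ matrix $X$, and the vector $\q_1$ from the proof of Theorem~\ref{Euler}, recalling that $e(\xi)$ lives in $\Z^2/\Im(X)$ and that exactly the three cases $\rank(X)=0,1,2$ occur, corresponding to conditions (1), (2), (3) there.

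First I would settle the easy direction, that the two exceptional families do have non-torsion Euler class. For $M(E_2,\ldots,E_2;m,0)$ with $m\ne 0$ the monodromy is trivial, so $\Im(X)=\{\zv\}$ and $e(\xi)=[(m,0)]$ is a non-torsion element of $\Z^2$. For $M(E_2,C^k,\ldots,E_2,E_2;m,n)$ with $n\ne 0$ one computes $C^k-E_2=\begin{pmatrix}0&k\\0&0\end{pmatrix}$, so $\Im(X)=\langle k\e_1\rangle\subset\Z\e_1$; the second coordinate then survives in the quotient and $e(\xi)=[(m,n)]$ has infinite order because $n\ne 0$.

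For the converse, a non-torsion Euler class can arise only in cases (1) and (2), since in case (3) the group $\Z^2/\Im(X)$ is finite. In case (1) the monodromy is trivial, so invoking the $SL(2;\Z)$-action of Remark~\ref{rem:monodromy} (conjugation by any $Q$ preserves triviality and sends $(m,n)$ to $Q(m,n)$) together with the transitivity of $SL(2;\Z)$ on primitive vectors, I can normalize $(m,n)$ to $(d,0)$ with $d=\gcd(m,n)\ne 0$, exhibiting $\xi\cong M(E_2,\ldots,E_2;d,0)$. In case (2) there are $Q\in SL(2;\Z)$ and integers $k_i,l_i$ with $A_i=QC^{k_i}Q^{-1}$ and $B_i=QC^{l_i}Q^{-1}$; conjugating by $Q^{-1}$ (a bundle isomorphism) I may assume $A_i=C^{k_i}$ and $B_i=C^{l_i}$, whence $\Im(\widetilde\rho)=\langle C^d\rangle\cong\Z$ with $d=\gcd(k_1,l_1,\ldots,k_g,l_g)\ne 0$, and the non-torsion hypothesis becomes $n\ne 0$ for the conjugated Euler vector. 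Since $\Im(p\circ\widetilde\rho)=\langle p(C^d)\rangle\cong\Z$ forces $k=l=m=1$ in the sense of Theorem~\ref{hom-psl} and $-E_2\notin\Im(\widetilde\rho)$, I can apply the normal form of Theorems~\ref{hom-psl} and~\ref{hom-sl} together with Proposition~\ref{isom} to realize a bundle isomorphism onto $M(E_2,C^d,E_2,E_2,\ldots,E_2,E_2;m',n')$; the mapping-class action leaves $\Im(\widetilde\rho)=\langle C^d\rangle$ unchanged, so the surviving monodromy really is a power of $C$. By the computation of the easy direction this bundle has non-torsion Euler class exactly when $n'\ne 0$, which holds since the non-torsion property is a bundle invariant. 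This places $\xi$ in the second family.

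The main obstacle I anticipate is the bookkeeping in case (2): one must verify that conjugation by $Q^{-1}$ and the mapping-class normalization are genuine bundle isomorphisms (guaranteed by Proposition~\ref{isom} and Remark~\ref{rem:Euler}) and, more delicately, track the Euler class through these moves to confirm that the non-torsion condition is equivalent to $n'\ne 0$ in the normalized bundle rather than merely implied by it. Everything else reduces to the rank computation already carried out for Theorem~\ref{Euler}.
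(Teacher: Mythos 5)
Your proposal is correct and follows exactly the route the paper intends: the paper states Theorem~\ref{compatible} without a separate proof, presenting it as a reinterpretation of Theorem~\ref{Euler}, and your argument fills in precisely that reading --- the rank-of-$X$ trichotomy from the proof of Theorem~\ref{Euler} isolates the non-torsion cases, which you then normalize to the two listed families via the $SL(2;\Z)$-action on the Euler vector (case (1)) and the normal forms of Theorems~\ref{hom-psl} and~\ref{hom-sl} together with Proposition~\ref{isom} (case (2)). Your bookkeeping in case (2), including the observation that $-E_2\notin\langle C^d\rangle$ forces the $m=l$, $k=1$ branch of Theorem~\ref{hom-sl} and that torsionness of the Euler class is preserved under the isomorphisms used, is sound.
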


Combining it with the former part of Theorem~\ref{Geiges} and the latter part of Theorem~\ref{Walczak}, 
we also obtain the following. 

\vspace{8pt}

\begin{theorem}~\label{symplectic}
Let $\pi \colon M^4\to \Sigma _g$ and $C$ be as in Theorem~\ref{compatible}. 
Then $M^4$ admits a symplectic structure if and only if $\pi $ is not isomorphic to either of the following. 
\begin{enumerate}
\item
$M(E_2, E_2, \ldots , E_2, E_2; m, 0) \;\; (g\ne 1, \; m\ne 0) $.
\item
$M(E_2, C^k, \ldots , E_2, E_2; m, n) \;\; (g\geq 2, \; k\in \Z, \; n\ne 0) $.
\end{enumerate}
\end{theorem}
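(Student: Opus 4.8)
The plan is to argue by a case analysis on the genus $g$, since the three regimes $g=0$, $g=1$ and $g\geq 2$ are governed by different cited results, and the genus constraints attached to the two exceptional families (1) and (2) are exactly what reconciles these regimes.

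First I would dispose of the case $g\geq 2$ using the equivalence asserted in Theorem~\ref{Walczak}, namely that $M^4$ admits a symplectic structure if and only if $\pi$ admits a \emph{compatible} symplectic structure. By Theorem~\ref{compatible}, the latter holds precisely when $\pi$ is not isomorphic to $M(E_2,\ldots,E_2;m,0)$ with $m\neq 0$ nor to $M(E_2,C^k,\ldots,E_2,E_2;m,n)$ with $n\neq 0$. As $g\geq 2$ meets both genus conditions in (1) and (2), this is the desired equivalence verbatim.

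For $g=1$ I would invoke the final sentence of Theorem~\ref{Geiges}, which guarantees that \emph{every} orientable $T^2$-bundle over $T^2$ has a symplectic total space. Here both exceptional families are vacuous, since (1) requires $g\neq 1$ and (2) requires $g\geq 2$; hence the hypothesis that $\pi$ be isomorphic to neither is automatically satisfied, and the equivalence holds trivially. For $g=0$ I would use the remark opening this section: a $T^2$-bundle over $S^2$ has $b_2(M^4)=0$ whenever its Euler class is nonzero and so admits no symplectic form, while the trivial bundle $S^2\times T^2$ is symplectic. By \S\ref{genus 0} each such bundle is isomorphic to $M(d,0)$ for a unique $d\in\Z_{\geq 0}$, and it is symplectic exactly when $d=0$; since (2) is vacuous for $g=0$ and (1) with $g=0$ reads $M(m,0)$ with $m\neq 0$, which is precisely the family with $d>0$, the equivalence follows again.

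The substance here lies entirely in the cited theorems, so the one point needing genuine care is confirming that the genus-dependent exceptions encode the vacuous cases correctly. Concretely, I expect the only real verification to be that a $T^2$-bundle over $S^2$ with nonzero Euler class is honestly isomorphic to some $M(m,0)$ with $m\neq 0$ rather than merely to $M(m,n)$, which is exactly where the reduction of \S\ref{genus 0} to the invariant $d=\gcd(m,n)$ enters.
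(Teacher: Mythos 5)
Your proposal is correct and is essentially the paper's own argument: the paper obtains Theorem~\ref{symplectic} by exactly this combination, namely Walczak's equivalence of compatible and total-space symplectic structures together with Theorem~\ref{compatible} for $g\geq 2$, the final assertion of Theorem~\ref{Geiges} for $g=1$ (where both exceptional families are vacuous), and the $b_2(M^4)=0$ observation at the start of \S 6 together with the reduction of \S~\ref{genus 0} to $M(d,0)$ for $g=0$. Your closing verification that a nontrivial bundle over $S^2$ is genuinely isomorphic to some $M(m,0)$ with $m\neq 0$, via $d=\gcd(m,n)$, is precisely the one point needing care, and you handle it as the paper does.
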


On the other hand, Ue (\cite{Ue90, Ue91, Ue92}) clarified which orientable $T^2$-bundle admits a complex structure on its total space. Comparing Theorem~\ref{symplectic} with his result, we obtain the following theorem. 

\vspace{8pt}

\begin{theorem}~\label{non-Kahler}
Let $\pi \colon M^4\to \Sigma _g$ and $C$ be as in Theorem~\ref{compatible}. Then $M^4$ admits a non-K\"{a}hler symplectic structure if $\pi $ is not isomorphic to either of the following; 
\begin{enumerate}
\item
trivial bundle,  
\item
hyperelliptic bundles $($see \cite{Ue90}, $\text{List }\mathrm{I. \; 1-3 (a))}$,  
\item
$M(E_2, E_2, \ldots , E_2, E_2; m, 0) \;\; (g\ne 1, \; m\ne 0) $, 
\item
$M(E_2, C^k, \ldots , E_2, E_2; m, n) \;\; (g\geq 2, \; k\in \Z, \; n\ne 0) $, 
\item
$M(E_2, B, \ldots , E_2, E_2; m, n) \;\; (g\geq  2, \; \ord(B)=2, 3, 4, 6)$. 
\end{enumerate}
\end{theorem}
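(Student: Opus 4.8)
The plan is to split the desired conclusion into two independent assertions---that $M^4$ is symplectic and that $M^4$ is not K\"{a}hler---and to control each by results already established. For the symplectic assertion, Theorem~\ref{symplectic} tells us that $M^4$ admits a symplectic structure unless $\pi$ is isomorphic to one of the two families listed there, and these are exactly the families (3) and (4) of the present statement. Hence, once $\pi$ avoids (3) and (4), we may fix a symplectic form $\omega$ on $M^4$. It then remains only to show that $M^4$ carries no K\"{a}hler structure, for then $\omega$ is automatically a non-K\"{a}hler symplectic form.

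To handle the K\"{a}hler assertion I would reduce it to a topological condition. By the Enriques--Kodaira theory, a compact complex surface admits a K\"{a}hler metric if and only if its first Betti number $b_1$ is even; since $b_1$ is a diffeomorphism invariant, $M^4$ admits \emph{some} K\"{a}hler structure if and only if $M^4$ admits a complex structure and $b_1(M^4)$ is even. Thus it suffices to show that any $\pi$ not in the families (1), (2), (5) yields a manifold that is either not complex or has odd $b_1$.

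The heart of the argument is therefore the identification of the K\"{a}hler $T^2$-bundles. Here I would invoke Ue's classification of complex structures on orientable $T^2$-bundles \cite{Ue90, Ue91, Ue92} and retain, through the parity criterion above, those with even $b_1$. The claim to establish is that these are precisely the three families (1), (2) and (5): the trivial product $\Sigma_g\times T^2$, the hyperelliptic bundles (Ue's List I.1-3(a)), and the single-handle finite-order bundles $M(E_2, B, \ldots , E_2, E_2; m, n)$ with $\ord(B)\in\{2,3,4,6\}$. For this last family, the formula $b_1(M_0)=2g+2-\rank(X)$ from the proof of Theorem~\ref{Euler} gives $\rank(B-E_2)=2$, because a finite-order element $B\ne E_2$ has no eigenvalue equal to $1$; hence $b_1=2g$ is even. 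A short Serre spectral sequence computation with the local system $H^1(T^2;\Q)$, whose invariant subspace vanishes for the same reason, shows that this parity is unchanged when the Euler class $(m,n)$ is turned on. Granting the resulting inclusion $\{\text{K\"{a}hler bundles}\}\subseteq (1)\cup(2)\cup(5)$, any $\pi$ isomorphic to none of (1)--(5) is both symplectic (avoiding (3), (4)) and non-K\"{a}hler (avoiding (1), (2), (5)), proving the theorem.

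The step I expect to be the main obstacle is precisely this matching: translating Ue's complex-analytic invariants---the type of the elliptic fibration and the presence of multiple fibers---into our monodromy-and-Euler-class coordinates, and verifying that no K\"{a}hler bundle escapes the three families. The most delicate point is the assertion that a nonzero Euler class in family (5) preserves the evenness of $b_1$, so that the bundle remains K\"{a}hler, while every symplectic bundle with infinite-order or genuinely distributed monodromy is forced to have odd $b_1$ or to carry no complex structure at all.
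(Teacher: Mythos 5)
Your proposal takes essentially the same route as the paper, which proves this theorem only by the remark ``Comparing Theorem~\ref{symplectic} with his result, we obtain the following theorem'': namely, symplectic existence from Theorem~\ref{symplectic} (whose two excluded families are exactly (3) and (4)), and non-existence of any K\"{a}hler structure from Ue's classification of complex structures \cite{Ue90, Ue91, Ue92} together with the Kodaira--Miyaoka--Siu criterion that a compact complex surface is K\"{a}hler if and only if $b_1$ is even. Your supporting computations (e.g.\ $\rank (B-E_2)=2$ for finite-order $B\ne E_2$, so $b_1=2g$ for family (5) independently of the Euler class) are correct and merely flesh out the comparison with Ue's lists that the paper leaves implicit.
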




By this theorem, we can reconfirm the fact that there exist infinitely many non-K\"{a}hler closed symplectic $4$-manifolds with even first Betti number, which is known as a corollary to Gompf's result \cite{Go95}. This is in a good contrast with the fact that a compact complex surface admits a K\"{a}her metric if and only if its first Betti number is even (\cite{Ko60, Ko63a, Ko63b, Ko64, Ko66, Ko68a, Ko68b, Mi, Si}). 

\begin{remark}
A non-compact complex surface does not necessarily admit a K\"{a}her metric even if its first Betti number is even. Indeed, there exist uncountably many non-K\"{a}hler complex structures on $\R^4$ (\cite{DKZ17}). Moreover, any orientable open $4$-manifold admits both K\"{a}hler structures and non-K\"{a}hler complex structures (\cite{DKZ18}). 
\end{remark}

\section*{Acknowledgements}
The authors would like to thank Professors Masayuki Asaoka, Yoshihiko Mitsumatsu, Hiroki Kodama, Osamu Saeki, Hisaaki Endo, Hajime Sato,  Kokoro Tanaka and Susumu Hirose for their helpful communications. 
Masayuki Asaoka told us the outline of the arguments of Theorem~\ref{hom-free products} and Corollary~\ref{lens}, 
and Yoshihiko Mitsumatsu did that of Theorem~\ref{Euler}. 
Hiroki Kodama gave us helpful advice about Propositions~\ref{F_g} and~\ref{cases}. 
Osamu Saeki, Hisaaki Endo, Hajime Sato, Kokoro Tanaka and Susumu Hirose provided us with numerous references related with this work. 
Finally, the first author is grateful to all the members of the Saturday Topology Seminar for careful checking and active questioning during the early draft stages of this manuscript. 

\bibliographystyle{amsplain}

\end{document}